\theoremstyle{plain}
\newtheorem{theorem}{Theorem}[section]
\newtheorem{lemma}[theorem]{Lemma}
\newtheorem{proposition}[theorem]{Proposition}
\newtheorem{corollary}[theorem]{Corollary}
\newtheorem{remark}[theorem]{Remark}
\newtheorem{remark-question}[section]{Remark-Question}
\newcommand\frg{{\mathfrak g}}
\newcommand\frh{{\mathfrak h}}
\newcommand\gc{\frg_\mathbb{C}}
\newcommand\Real{{\mathfrak R}{\frak e}\,} 
\newcommand\Imag{{\mathfrak I}{\frak m}\,}
\newcommand\nilm{\Gamma\backslash G}
\newcommand\db{{\bar{\partial}}}
\newcommand\zzz{{\!\!\!}}
\begin{document}
\title[]{On the Bott-Chern cohomology and balanced Hermitian nilmanifolds}


\author{A. Latorre}
\address[A. Latorre and L. Ugarte]{Departamento de Matem\'aticas\,-\,I.U.M.A.\\
Universidad de Zaragoza\\
Campus Plaza San Francisco\\
50009 Zaragoza, Spain}
\email{560402@unizar.es}
\email{ugarte@unizar.es}

\author{L. Ugarte}

\author{R. Villacampa}
\address[R. Villacampa]{Centro Universitario de la Defensa\,-\,I.U.M.A., Academia General
Mili\-tar, Crta. de Huesca s/n. 50090 Zaragoza, Spain}
\email{raquelvg@unizar.es}


\maketitle

\begin{abstract}
The Bott-Chern cohomology of 6-dimensional nilmanifolds endowed with invariant complex structure is studied
with special attention to the cases when
balanced or strongly Gauduchon Hermitian metrics exist.
We consider complex invariants introduced by Angella and Tomassini and by Schweitzer,
which are related to the $\partial\db$-lemma condition and defined in terms of the Bott-Chern cohomology,
and show that the vanishing of some of these invariants is not a closed property under holomorphic deformations.
In the balanced case, we determine the spaces that
parametrize deformations in type IIB supergravity described by Tseng and Yau in terms of the Bott-Chern cohomology
group of bidegree (2,2).
\end{abstract}


\section{Introduction}\label{intro}

Given a compact complex manifold $M$, the Dolbeault cohomology groups $H^{p,q}_{\db}(M)$, and more generally the terms $E_r^{p,q}(M)$ in the
Fr\"olicher spectral sequence~\cite{Fro}, are well-known finite dimensional invariants of~$M$.
Other complex invariants are given by the Bott-Chern cohomology groups~\cite{BC,Ae}, which we will denote by $H^{p,q}_{\mathrm{BC}}(M)$.
If $M$ satisfies the $\partial\db$-lemma condition~\cite{DGMS} (in particular, if $M$ is compact K\"ahler), i.e. if $\ker \partial\cap \ker \db \cap {\rm im}\, d={\rm im}\, \partial\db$, or equivalently if
any $\partial$-closed, $\db$-closed and $d$-exact complex form on $M$ is $\partial\db$-exact, then $H^{p,q}_{\mathrm{BC}}(M)\cong E_r^{p,q}(M)$.
However, in general the Bott-Chern cohomology groups do not coincide with $E_r^{p,q}(M)$ and provide new invariants
of the compact complex manifold $M$.

In this paper we consider 6-dimensional nilmanifolds $M=\Gamma\backslash G$ endowed with invariant complex structures $J$. Using the classification
of such complex structures given in~\cite{COUV} and the fact that the Bott-Chern cohomology of $(\Gamma\backslash G,J)$
can be reduced to calculation at the level of
the Lie algebra $\frg$ underlying the nilmanifold~\cite{Ang}, in Section~\ref{calcul} we obtain explicit generators of each Bott-Chern cohomology
group $H^{p,q}_{\mathrm{BC}}(\Gamma\backslash G,J)$. As a consequence, in the cases when the complex structure $J$ admits
balanced or strongly Gauduchon Hermitian metrics we show the dimensions of its Bott-Chern cohomology groups.

C-C. Wu proved in~\cite{Wu} that the $\partial\db$-lemma property for compact complex manifolds $M$ is open under holomorphic deformations, however
the deformation limits of compact complex manifolds satisfying the $\partial\db$-lemma remain unclear.
D. Angella and A. Tomassini gave in~\cite{AT} another proof of the openness of the $\partial\db$-lemma property as a consequence of
general inequalities involving the Bott-Chern cohomology of the manifold~\cite{BC}.
If $n$ is the complex dimension of $M$, let $\mathbf{f}_k(M)=\sum_{p+q=k} \left( h_{p,q}^{\rm BC}(M) + h_{n-p,n-q}^{\rm BC}(M) \right) - 2 b_k(M)$, where
$h_{p,q}^{\mathrm{BC}}(M)$ denotes the dimension of $H^{p,q}_{\mathrm{BC}}(M)$ and $b_k(M)$ the $k$-th Betti number of $M$.
They proved that the complex invariants $\mathbf{f}_k(M)$ are non-negative, and
$\mathbf{f}_k(M)=0$ for all $0\leq k\leq 2n$ if and only if $M$ satisfies the $\partial\db$-lemma~\cite{AT}.
Thus, compact complex manifolds satisfying the $\partial\db$-lemma are characterized as those compact complex manifolds for which
$\mathbf{f}_k(M)= 0$ for all $k$.

On the other hand, if $M$ is a compact complex manifold then for any $r\geq 1$ one can consider the complex invariants
$\mathbf{k}_r(M)=h_{1,1}^{\rm BC}(M)+2 \dim E^{0,2}_r(M) - b_2(M)$,
where $E^{0,2}_r(M)$ is the $r$-step $(0,2)$-term of the Fr\"olicher spectral sequence of $M$.
In Proposition~\ref{relation} we extend a result of M. Schweitzer~\cite{Schw} by showing
that $\mathbf{k}_r(M)\geq 0$ for any $r$, and that $\mathbf{k}_1(M)= 0$
(and therefore all $\mathbf{k}_r$ vanish) when $M$ satisfies the $\partial\db$-lemma.

Section~\ref{BC-hol-def} is devoted to the study of the behaviour of the properties ``$\mathbf{f}_k\!=\! 0$'' (for a given $k$)
and ``$\mathbf{k}_1\!=\! 0$'' under holomorphic deformations $M_t$.
Due to the upper semicontinuity of the Bott-Chern numbers $h_{p,q}^{\rm BC}(M_t)$ with respect to $t$~\cite{Schw}, one has that
the properties $\mathbf{f}_k= 0$ and $\mathbf{k}_1= 0$ are always open under holomorphic deformations.
Using the results of Section~\ref{calcul},
we show in Propositions~\ref{no-closed} and~\ref{K-no-closed}
that they are not closed in general, concretely there is a holomorphic family of compact complex manifolds $M_t$,
$t\in \mathbb{C}$ with $|t|<\frac13$, such that $\mathbf{f}_2(M_t)= 0=\mathbf{k}_1(M_t)$ for $t\not=0$, but $\mathbf{f}_2(M_0)=2=\mathbf{k}_1(M_0)$.

In Section~\ref{supergrav} we put special attention to the cases when balanced Hermitian metrics $F$ exist.
L-S. Tseng and S-T. Yau showed in~\cite{TY} that the Bott-Chern cohomology group $H^{2,2}_{\mathrm{BC}}(M)$
arises in the context of type II string theory as it can be used to count a subset of scalar moduli fields in Minkowski compactifications
with RR fluxes in the presence of O5/D5 brane sources. More precisely, the moduli space of solutions given by linearized variations
is parametrized by the space, which we will denote by $\mathcal L^{2,2}(M,J,F)$, consisting of the harmonic forms of the Bott-Chern cohomology
group $H^{2,2}_{\mathrm{BC}}(M,J)$ which are annihilated by $F$.
Using the explicit description of $H^{2,2}_{\mathrm{BC}}(M)$ given in Section~\ref{calcul} and the results
of~\cite{UV2}, we determine the space $\mathcal L^{2,2}(\Gamma\backslash G,J,F)$ for any invariant balanced Hermitian structure $(J,F)$ on a 6-dimensional nilmanifold $M=\Gamma\backslash G$.
We show that its dimension only depends on the complex structure, in particular, if $M_0$ denotes the Iwasawa manifold then one has that $\dim \mathcal L^{2,2}(M_0,F)=7$ for any $F$. As an application we show that $\dim \mathcal L^{2,2}$ is not stable under small deformations; concretely, there is a holomorphic deformation $M_t$, $t\in \mathbb{C}$ with $|t|<1$, of the Iwasawa manifold $M_0$ admitting balanced structures for each $t$ and such that $\dim \mathcal L^{2,2}(M_t,F)=5$ for $0<|t|<1$ and for any balanced Hermitian metric $F$ on $M_t$ (Proposition~\ref{vari}).

\vskip.2cm

During the preparation of this paper we were informed by Adriano Tomassini that Daniele Angella, Maria Giovanna Franzini
and Federico Alberto Rossi have obtained in \cite{AFR} similar computations which are used to provide a measure of the degree
of non-K\"ahlerianity of 6-dimensional nilmanifolds with invariant complex structure, as well as the relation between
Bott-Chern cohomological properties and existence of pluriclosed metrics.

\section{Invariant complex structures on nilmanifolds, Bott-Chern and Aeppli cohomologies, and special Hermitian metrics}\label{ComplexStructures}

Let $M$ be a compact complex manifold. The Dolbeault cohomology groups $H^{p,q}_{\db}(M)$, and more generally the terms $E_r^{p,q}(M)$ in the
Fr\"olicher spectral sequence~\cite{Fro}, are well-known finite dimensional invariants of the complex manifold $M$.
On the other hand, the
Bott-Chern and Aeppli cohomologies define additional complex invariants of $M$ given, respectively, by~\cite{Ae,BC}
$$
H^{p,q}_{\rm BC}(M)=
{\ker\{d\colon \Omega^{p,q}(M)\longrightarrow \Omega^{p+q+1}(M) \}
\over
{\rm im}\,\{\partial\db\colon \Omega^{p-1,q-1}(M)\longrightarrow \Omega^{p,q}(M) \}},
$$
and
$$
H^{p,q}_{\rm A}(M)=
{\ker\{\partial\db\colon \Omega^{p,q}(M)\longrightarrow \Omega^{p+1,q+1}(M) \}
\over
{\rm im}\,\{\partial\colon \Omega^{p-1,q}(M)\longrightarrow \Omega^{p,q}(M) \}
+
{\rm im}\,\{\db\colon \Omega^{p,q-1}(M)\longrightarrow \Omega^{p,q}(M) \}}.
$$
By the Hodge theory developed in \cite{Schw}, all these complex invariants are also finite dimensional and one has the isomorphisms
$H^{p,q}_{\mathrm{A}}(M)\cong H^{n-q,n-p}_{\mathrm{BC}}(M)$.
Notice that $H^{q,p}_{\mathrm{BC}}(M) \cong H^{p,q}_{\mathrm{BC}}(M)$ by complex conjugation.

For any $r\geq 1$ and for any $p,q$, there are natural maps
$$
H^{p,q}_{\mathrm{BC}}(M) \longrightarrow E_r^{p,q}(M)
\quad\quad\quad
{\mbox{\rm and }}
\quad\quad\quad
E_r^{p,q}(M) \longrightarrow H^{p,q}_{\mathrm{A}}(M).
$$
Recall that $E_1^{p,q}(M)\cong H^{p,q}_{\db}(M)$ and that the terms for $r=\infty$ provide a decomposition of the de Rham cohomology of the manifold, i.e.
$H^k_{\rm dR}(M,\mathbb{C})\cong \oplus_{p+q=k} E_{\infty}^{p,q}(M)$.

From now on we shall denote by $h_{p,q}^{\mathrm{BC}}(M)$
the dimension of the cohomology group $H^{p,q}_{\mathrm{BC}}(M)$.
The Hodge numbers will be denoted simply by $h_{p,q}(M)$ and the Betti numbers by $b_{k}(M)$.

\medskip

Notice that in general the natural maps above are neither injective nor surjective. However, all the maps are isomorphisms if and only if
$M$ satisfies the $\partial\db$-lemma \cite{DGMS}.

Therefore, if the $\partial\db$-lemma is satisfied then the above invariants coincide and in particular
one has the Hodge decomposition $H^k_{\rm dR}(M,\mathbb{C})\cong \oplus_{p+q=k} H^{p,q}_{\db}(M)$, where
$H^{p,q}_{\db}(M)\cong \overline{H^{q,p}_{\db}(M)}$.

Here we will consider nilmanifolds endowed with an invariant complex structure $J$, i.e. compact quotients of
simply-connected nilpotent Lie groups $G$ by a lattice of
maximal rank such that $J$ stems naturally from a ``complex'' structure $J$ on the Lie algebra $\frg$ of $G$.
Many aspects of this complex geometry have been investigated by several authors, for instance classification, complex cohomologies computation
or deformation problems~\cite{AB,ABD,Ang,COUV,CF,CFP,CFGU2,KS,MPPS,R1,R2,Sakane,S}.
It is well-known that a complex nilmanifold does not satisfy the $\partial\db$-lemma, unless it is a torus, because it is never formal \cite{DGMS,Ha}, so the different cohomologies above do not coincide in general.

Salamon gave in~\cite{S} a characterization of the invariant complex structures $J$ as those endomorphisms
$J\colon \frg\longrightarrow \frg$ such that $J^2=-{\rm Id}$ for which there exists a basis $\{\omega^j\}_{j=1}^n$ of
the $i$-eigenspace $\frg^{1,0}$ of the extension of $J$ to $\gc^*=\frg^*\otimes_{\mathbb{R}}\mathbb{C}$ such that $d\omega^1=0$ and
$$
d \omega^{j} \in \mathcal{I}(\omega^1,\ldots,\omega^{j-1}), \quad
\mbox{ for } j=2,\ldots,n ,
$$
where $\mathcal{I}(\omega^1,\ldots,\omega^{j-1})$ is the ideal in
$\bigwedge\phantom{\!}^* \,\gc^*$ generated by
$\{\omega^1,\ldots,\omega^{j-1}\}$. A consequence of this result is the existence of a nowhere vanishing closed $(n,0)$-form $\Psi$
on the nilmanifold, so $h_{n,0}^{\mathrm{BC}}(M,J)=1$ for any invariant $J$. Moreover, for any $p\geq 1$ the $(p,0)$-form
$\omega^{1}\wedge\cdots\wedge\omega^{p}$ is closed and since
$H^{p,0}_{\rm BC}(M)=\ker \{ d\colon \Omega^{p,0}(M)\longrightarrow \Omega^{p+1}(M) \}$
it defines a non-zero cohomology class.
Therefore, for any invariant complex structure $J$ on a nilmanifold $M$ one has
$h_{p,0}^{\mathrm{BC}}(M,J)\geq 1$ for all $p$.

When the subalgebra $\frg_{1,0}$ is abelian, the complex structure $J$ is called \emph{abelian}~\cite{ABD} and the Lie algebra differential $d$
satisfies $d(\frg^{1,0})\subset \bigwedge^{1,1}(\frg^*)$. On the other hand, the complex structures associated to \emph{complex Lie algebras}
satisfy $d(\frg^{1,0})\subset \bigwedge^{2,0}(\frg^*)$ and we will refer to them as \emph{complex-parallelizable} structures.
Both abelian and complex-parallelizable structures are particular classes of \emph{nilpotent} complex structures~\cite{CFGU2}
for which there is a basis $\{\omega^j\}_{j=1}^n$
for~$\frg^{1,0}$ satisfying $d\omega^1=0$ and
$$
d \omega^j \in \bigwedge\phantom{\!\!}^2 \,\langle
\omega^1,\ldots,\omega^{j-1},
\omega^{\overline{1}},\ldots,\omega^{\overline{j-1}} \rangle, \quad
\mbox{ for } j=2,\ldots,n .
$$

Concerning the calculation of Bott-Chern cohomology for nilmanifolds with invariant complex structure,
Angella has proved the following Nomizu type~\cite{Nomizu} theorem:

\begin{theorem}\label{Angella}\cite{Ang}
If the natural inclusion
\begin{equation}\label{inclusion}
\left(\bigwedge\phantom{\!\!}^{p,\bullet}(\frg^*),\db\right) \hookrightarrow (\Omega^{p,\bullet}(M),\db)
\end{equation}
induces an isomorphism
\begin{equation}\label{iso-Dolbeault}
\iota\colon H_{\db}^{p,q}(\frg) \longrightarrow H_{\db}^{p,q}(M)
\end{equation}
between the Lie-algebra Dolbeault cohomology of $(\frg,J)$ and the Dolbeault cohomology of $M$, then the
natural map
\begin{equation}\label{iso-BC}
\iota\colon H_{\rm BC}^{p,q}(\frg) \longrightarrow H_{\rm BC}^{p,q}(M)
\end{equation}
between the Lie-algebra Bott-Chern cohomology of $(\frg,J)$ and the Bott-Chern cohomology of $M$
is also an isomorphism.
\end{theorem}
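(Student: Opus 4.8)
The plan is to treat the inclusion~(\ref{inclusion}) as a morphism of the \emph{full} double complex, and to feed the Dolbeault hypothesis into a purely homological comparison for Bott--Chern cohomology.

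First I would observe that $\iota$ is not merely a map of $\db$-complexes: since invariant forms are preserved by $\partial$, by $\db$ and by complex conjugation, the inclusion
\[
\left(\bigwedge\phantom{\!\!}^{\bullet,\bullet}(\frg^*),\partial,\db\right)\hookrightarrow\left(\Omega^{\bullet,\bullet}(M),\partial,\db\right)
\]
is a morphism of bigraded double complexes commuting with $\partial$, $\db$, $d$ and conjugation. Hence $\iota$ induces maps on all the cohomologies attached to the double complex and is compatible with every natural transformation between them. Three of these induced maps are isomorphisms for free: on de Rham cohomology by Nomizu's theorem~\cite{Nomizu}; on Dolbeault cohomology $H^{p,q}_{\db}$ by the standing hypothesis~(\ref{iso-Dolbeault}); and, because $\iota$ commutes with conjugation and $H^{p,q}_{\partial}\cong\overline{H^{q,p}_{\db}}$, also on the conjugate-Dolbeault groups $H^{p,q}_{\partial}$ for all $p,q$.

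The core of the argument is then the homological implication: a morphism of bounded double complexes which induces isomorphisms on both $H_{\db}$ and $H_{\partial}$ --- equivalently, on the $E_1$-pages of both Fr\"olicher spectral sequences --- induces isomorphisms on $H_{\rm BC}$ (and, dually, on $H_{\rm A}$). I would establish this by means of the natural exact sequences of Varouchas type relating Bott--Chern and Aeppli cohomology to Dolbeault and conjugate-Dolbeault cohomology, as exploited in~\cite{AT}: applied simultaneously to $(\frg,J)$ and to $M$, they produce a commutative ladder on which $\iota$ acts, and along which the Five Lemma upgrades the known isomorphisms step by step.

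The main obstacle is that $H_{\rm BC}$ never appears in isolation: the relevant exact sequences couple it with the Aeppli groups $H_{\rm A}$, so a single application of the Five Lemma does not close. I would resolve this in one of two ways. Either run a simultaneous induction on the bidegree $(p,q)$, proving the Bott--Chern and Aeppli isomorphisms together and seeding the induction with the lowest-degree groups, where $H^{p,0}_{\rm BC}$ and the extreme corners are directly controlled by the already-established Dolbeault and de Rham isomorphisms. Or invoke the duality $H^{p,q}_{\rm A}\cong H^{n-q,n-p}_{\rm BC}$ of Schweitzer~\cite{Schw} --- valid on $M$ and, using the unimodularity of the nilpotent Lie algebra $\frg$, on $(\frg,J)$ as well, and compatible with $\iota$ --- to convert each needed Aeppli isomorphism into a Bott--Chern one, thereby closing the coupled system. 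Verifying that the exact sequences and the duality are genuinely natural with respect to $\iota$, so that the ladder commutes, is the point demanding care; everything else is diagram chasing.
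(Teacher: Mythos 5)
A preliminary remark: the paper under review does not prove this theorem at all; it quotes it from \cite{Ang}, so the benchmark is Angella's argument. Your opening step coincides with the start of that argument: $\iota$ is a morphism of the full bigraded complexes commuting with $\partial$, $\db$, $d$ and conjugation, and it induces isomorphisms on de Rham cohomology (Nomizu), on Dolbeault cohomology (hypothesis, read in all bidegrees) and on conjugate-Dolbeault cohomology (conjugation). The gap lies entirely in your ``core'' homological implication. That implication --- a morphism of bounded double complexes inducing isomorphisms on $H_{\db}$ and $H_{\partial}$ induces isomorphisms on $H_{\rm BC}$ and $H_{\rm A}$ --- happens to be true, but it cannot be reached by the route you describe. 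The Varouchas-type exact sequences do not couple $H_{\rm BC}$ merely with $H_{\rm A}$ and the Dolbeault groups; they unavoidably contain ``anomaly'' spaces such as $(\mathrm{im}\,\partial\cap\mathrm{im}\,\db)/\mathrm{im}\,\partial\db$ and $(\ker\partial\cap\mathrm{im}\,\db)/\mathrm{im}\,\partial\db$. The latter is precisely the kernel of the natural map $H^{p,q}_{\rm BC}\to H^{p,q}_{\db}$, so proving that $\iota$ behaves well on these anomaly groups is essentially the statement you are trying to prove; the hypothesis gives no hold on them, and the Five Lemma ladder never closes. Neither of your repairs touches this: the anomaly groups are not cohomologies in lower bidegree, so there is no seed or step for an induction on $(p,q)$; and Schweitzer duality is contravariant --- under the pairing, the adjoint of $\iota$ on $H^{p,q}_{\rm BC}$ is a map $H^{n-p,n-q}_{\rm A}(M)\to H^{n-p,n-q}_{\rm A}(\frg)$, an averaging, not $\iota$ --- so it only permutes the unknowns among bidegrees. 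It is telling that the statement for \emph{arbitrary} morphisms of bounded double complexes was established only much later, by Stelzig's structure theory (decomposition into squares and zigzags), and not by diagram chases of this kind.

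What your proposal misses is the nilmanifold-specific ingredient on which Angella's proof runs: $\iota$ is a \emph{split} injection of double complexes. Averaging over the compact quotient against a bi-invariant volume form (Belgun's symmetrization) gives $\mu\colon\Omega^{\bullet,\bullet}(M)\to\bigwedge^{\bullet,\bullet}(\frg^*)$ which commutes with $\partial$ and $\db$ (because $J$ is invariant) and satisfies $\mu\circ\iota=\mathrm{id}$. Hence $\Omega^{\bullet,\bullet}(M)\cong\bigwedge^{\bullet,\bullet}(\frg^*)\oplus W$ as double complexes, with $W=\ker\mu$ stable under conjugation, and every cohomology splits accordingly; in particular \eqref{iso-BC} is injective with no hypothesis whatsoever. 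The hypothesis \eqref{iso-Dolbeault} says $H^{p,q}_{\db}(W)=0$ for all $(p,q)$, hence $H^{p,q}_{\partial}(W)=0$ by conjugation, and the theorem reduces to an elementary lemma: a bounded double complex with exact rows and columns has vanishing Bott-Chern (and Aeppli) cohomology. This is a finite staircase chase: given $x\in W^{p,q}$ with $\partial x=\db x=0$, write $x=\db y_1$; then $\partial y_1$ is $\db$-closed, so $\partial y_1=\db y_2$, and so on down the staircase; boundedness forces $\partial y_q=0$, and climbing back up with exactness of the rows gives $y_1=\partial z_1-\db z_2$ with $z_1\in W^{p-1,q-1}$, whence $x=\db\partial z_1\in\mathrm{im}\,\partial\db$, so $H^{p,q}_{\rm BC}(W)=0$. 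Without the splitting your argument has no engine; with it, the proof closes and needs neither the Varouchas sequences nor the duality.
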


Conditions under which the inclusion \eqref{inclusion} induces an isomorphism \eqref{iso-Dolbeault} can be found in~\cite{CF,CFGU2,R2,Sakane}; for instance, it is always true for abelian and complex-parallelizable structures on nilmanifolds.

\medskip

The existence of special Hermitian metrics on a compact complex manifold $M$ is an important question in complex geometry. Let us denote by $F$ the associated fundamental form.
If $F^{n-1}$ is $\partial \db$-closed then the Hermitian metric is
called \emph{standard} or \emph{Gauduchon}. Recall that by~\cite{Gau} there exists a Gauduchon metric in the conformal class of
any Hermitian metric.
Popovici has introduced and studied the special class of \emph{strongly Gauduchon} (\emph{sG} for short) metrics, defined by the condition
$\partial F^{n-1}=\db\alpha$ for some complex form $\alpha$. A particularly interesting class of sG metrics is the one given by the \emph{balanced} Hermitian metrics, defined by the condition $dF^{n-1}=0$~\cite{Mi}.

It is clear that any sG metric is a Gauduchon metric. Notice that if the
map
\begin{equation}\label{inyect}
H^{n,n-1}_{\db}(M)\longrightarrow H^{n,n-1}_{\rm A}(M)
\end{equation}
is injective (for instance if the $\partial\db$-lemma is satisfied or if $h_{0,1}(M)=0$) then any Gauduchon metric is an sG metric;
in fact, if $\partial \db F^{n-1}=0$ then $\partial F^{n-1}$ defines a class in $H^{n,n-1}_{\db}(M)$
such that $[\partial F^{n-1}]_A=0$ in $H^{n,n-1}_{\rm A}(M)$, so the injectivity of \eqref{inyect}
implies the existence of a complex form $\alpha$ such that $\partial F^{n-1}=\db \alpha$.

Therefore, if \eqref{inyect} is injective then there exists an sG metric in the conformal class of
any Hermitian metric.
Notice that by Serre duality and by the dualities between Aeppli and Bott-Chern cohomologies, one has that the
injectivity of \eqref{inyect} implies
$$
h_{0,1}(M)=\dim H^{n,n-1}_{\db}(M) \leq \dim H^{n,n-1}_{\rm A}(M) = h_{0,1}^{\rm BC}(M).
$$

\begin{corollary}\label{abelian-no-inyect}
Let $M$ be a $2n$-dimensional nilmanifold (not a torus) endowed with an abelian complex structure $J$. Then,
the map \eqref{inyect} is not injective.
\end{corollary}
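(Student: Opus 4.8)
The plan is to argue by contradiction using the dimension inequality recorded just before the statement: if the map \eqref{inyect} were injective, then $h_{0,1}(M)\leq h_{0,1}^{\rm BC}(M)$. I will show that for a non-toral nilmanifold with abelian complex structure the reverse \emph{strict} inequality $h_{0,1}(M) > h_{0,1}^{\rm BC}(M)$ holds, which forces \eqref{inyect} to fail to be injective.

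First I would reduce both numbers to the Lie algebra $\frg$. Since $J$ is abelian, the inclusion \eqref{inclusion} induces an isomorphism on Dolbeault cohomology, so by Theorem~\ref{Angella} it also does on Bott-Chern cohomology; hence $h_{0,1}(M)=\dim H^{0,1}_{\db}(\frg)$ and $h_{0,1}^{\rm BC}(M)=\dim H^{0,1}_{\rm BC}(\frg)$. (Equivalently, $\dim H^{n,n-1}_{\db}(M)=h_{0,1}(M)$ by Serre duality, and $\dim H^{n,n-1}_{\rm A}(M)=h_{1,0}^{\rm BC}(M)=h_{0,1}^{\rm BC}(M)$ via the duality $H^{n,n-1}_{\rm A}\cong H^{1,0}_{\rm BC}$, matching the two sides of the displayed inequality.) Next I would exploit the abelian structure equations. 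Fixing a basis $\{\omega^j\}$ of $\frg^{1,0}$ as in Salamon's characterization, abelianness means $d(\frg^{1,0})\subset \bigwedge^{1,1}(\frg^*)$, and conjugating gives $d(\frg^{0,1})\subset \bigwedge^{1,1}(\frg^*)$. Thus the $(0,2)$-component of $d\omega^{\bar j}$ vanishes, i.e. $\db\omega^{\bar j}=0$, so every $(0,1)$-form is $\db$-closed. As the relevant coboundary spaces are trivial ($\db$ annihilates constants, and $\partial\db$ on $\bigwedge^{0,1}(\frg^*)$ originates from $\bigwedge^{-1,0}(\frg^*)=0$), I obtain $H^{0,1}_{\db}(\frg)=\bigwedge^{0,1}(\frg^*)$ and $H^{0,1}_{\rm BC}(\frg)=\ker\{\partial\colon \bigwedge^{0,1}(\frg^*)\to \bigwedge^{1,1}(\frg^*)\}$; in particular $h_{0,1}(M)=n$.

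The final step is to see that this Bott-Chern kernel is a \emph{proper} subspace. On $\bigwedge^{0,1}(\frg^*)$ one has $d=\partial$ (the $\db$-part is zero), so $\partial\omega^{\bar j}=d\omega^{\bar j}$. If $\partial$ vanished identically on $\bigwedge^{0,1}(\frg^*)$, then $d\omega^{\bar j}=0$ for every $j$, and by conjugation $d\omega^{j}=0$ for every $j$, whence $d\equiv 0$ on $\frg^*$; this would make $\frg$ abelian and $M$ a torus, contrary to hypothesis. Therefore $\partial\not\equiv 0$ on $\bigwedge^{0,1}(\frg^*)$, giving $h_{0,1}^{\rm BC}(M)=\dim\ker\partial \leq n-1 < n = h_{0,1}(M)$.

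Combining the three steps yields $h_{0,1}(M) > h_{0,1}^{\rm BC}(M)$, contradicting the inequality that injectivity of \eqref{inyect} would impose, so \eqref{inyect} is not injective. The computations are short; the only point requiring care is the bookkeeping that transports the $M$-level statement to $\frg$, ensuring the Serre and Aeppli--Bott-Chern dualities are applied in exactly the degree $(n,n-1)$ appearing in the displayed inequality, and confirming that ``not a torus'' is precisely the non-vanishing of $d$ on $\frg^*$.
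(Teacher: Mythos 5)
Your proof is correct and takes essentially the same route as the paper's: both reduce the claim to showing $h_{0,1}(M) > h_{0,1}^{\rm BC}(M)$ via the displayed dimension inequality, pass to the Lie algebra by Theorem~\ref{Angella}, use abelianness to conclude every $(0,1)$-form is $\db$-closed (so $h_{0,1}(M)=n$), and use ``not a torus'' to see that not every such form is $d$-closed (so $h_{0,1}^{\rm BC}(M)<n$). The only cosmetic difference is that you work directly on $\frg^{0,1}$ while the paper conjugates to $\frg^{1,0}$; the substance is identical.
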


\begin{proof}
It suffices to show that if $J$ is abelian then $h_{0,1}(M) > h_{0,1}^{\rm BC}(M)$.
By Theorem~\ref{Angella} we have
$$
H^{0,1}_{\db}(M)\cong H^{0,1}_{\db}(\frg)=\{\alpha_{0,1}\in\frg^{0,1} \mid \db \alpha_{0,1}=0 \}
\cong \{\alpha_{1,0}\in\frg^{1,0} \mid \partial \alpha_{1,0}=0 \},$$
and
$$
H^{0,1}_{\rm BC}(M)\cong H^{0,1}_{\rm BC}(\frg)=\{\alpha_{0,1}\in\frg^{0,1} \mid d \alpha_{0,1}=0 \}
\cong \{\alpha_{1,0}\in\frg^{1,0} \mid d \alpha_{1,0}=0 \}.
$$
If $J$ is abelian then $\partial(\frg^{1,0})=0$ and therefore $h_{0,1}(M)=n$. So, if $M$ is not a torus then
$h_{0,1}^{\rm BC}(M)<n$ and \eqref{inyect} is not injective.
\end{proof}

\section{Bott-Chern cohomology in dimension 6}\label{calcul}

Let $M=\nilm$ be a $6$-dimensional nilmanifold endowed with an invariant complex structure $J$,
and let $\frg$ be the Lie algebra of $G$.
Rollenske proved in \cite[Section 4.2]{R2} that if $\frg\not\cong\frh_7$ then the natural inclusion
\eqref{inclusion} induces an isomorphism \eqref{iso-Dolbeault}, so by Theorem~\ref{Angella} the computation of the Bott-Chern cohomology
is reduced to the Lie-algebra level.

In \cite{Schw} Schweitzer computed the Bott-Chern cohomology of the Iwasawa manifold and in \cite{Ang} Angella calculated the
Bott-Chern cohomology groups of its small deformations.
Notice that by~\cite[Theorem 2.6]{R1} if $J_0$ is an invariant complex structure for which the inclusion
\eqref{inclusion} induces an isomorphism \eqref{iso-Dolbeault} then small deformations $J_a$ of $J_0$
are again invariant for sufficiently small $a$. Since in this section we compute the Bott-Chern cohomology
of any pair $(\frg,J)$, we cover the Bott-Chern cohomology of any invariant complex structure
and its sufficiently small deformations on any $6$-dimensional nilmanifold with underlying Lie algebra not isomorphic to $\frh_7$.

Recall that a 6-dimensional nilpotent Lie algebra has a complex
structure if and only if it is isomorphic to one of the following
Lie algebras \cite{S}:
$$
\begin{array}{rcl}
\frh_{1} &\!\!=\!\!& (0,0,0,0,0,0),\\[-1pt]
\frh_{2} &\!\!=\!\!& (0,0,0,0,12,34),\\[-1pt]
\frh_{3} &\!\!=\!\!& (0,0,0,0,0,12+34),\\[-1pt]
\frh_{4} &\!\!=\!\!& (0,0,0,0,12,14+23),\\[-1pt]
\frh_{5} &\!\!=\!\!& (0,0,0,0,13+42,14+23),\\[-1pt]
\frh_{6} &\!\!=\!\!& (0,0,0,0,12,13),\\[-1pt]
\frh_{7} &\!\!=\!\!& (0,0,0,12,13,23),\\[-1pt]
\frh_{8} &\!\!=\!\!& (0,0,0,0,0,12),\\[0pt]
\frh_{9} &\!\!=\!\!& (0,0,0,0,12,14+25),
\end{array}
\quad\quad\quad
\begin{array}{rcl}
\frh_{10} &\!\!=\!\!& (0,0,0,12,13,14),\\[-1pt]
\frh_{11} &\!\!=\!\!& (0,0,0,12,13,14+23),\\[-1pt]
\frh_{12} &\!\!=\!\!& (0,0,0,12,13,24),\\[-1pt]
\frh_{13} &\!\!=\!\!& (0,0,0,12,13+14,24),\\[-1pt]
\frh_{14} &\!\!=\!\!& (0,0,0,12,14,13+42),\\[-1pt]
\frh_{15} &\!\!=\!\!& (0,0,0,12,13+42,14+23),\\[-1pt]
\frh_{16} &\!\!=\!\!& (0,0,0,12,14,24),\\[-1pt]
\frh^-_{19} &\!\!=\!\!& (0,0,0,12,23,14-35),\\[0pt]
\frh^+_{26} &\!\!=\!\!& (0,0,12,13,23,14+25).
\end{array}
$$
The Lie algebras $\frh_{2},\ldots,\frh_{8}$ are 2-step nilpotent, whereas $\frh_{9},\ldots,\frh_{16}$ and $\frh^-_{19}$ are 3-step.
Notice that $\frh^+_{26}$ is the only 4-step nilpotent Lie algebra having complex structure.

It is well-known that there are, up to equivalence, two complex-parallelizable structures defined by the
equations
\begin{equation}\label{iwa}
d\omega^1=d\omega^2=0,\quad d\omega^3=\rho\,\omega^{12},
\end{equation}
with $\rho=0$ or $1$, and the Lie algebras are $\frh_1$ (for $\rho=0$) and $\frh_5$ (for $\rho=1$), where the latter case corresponds to the Iwasawa manifold. The remaining complex structures in dimension 6 are parametrized, up to equivalence, by the following three families \cite{COUV}:

\vskip.4cm

\textbf{Family I:}\quad $d\omega^1=d\omega^2=0,\quad d\omega^3=\rho\,\omega^{12} + \omega^{1\bar1} + \lambda\,\omega^{1\bar2} + D\,\omega^{2\bar2}$,

\vskip.2cm

\noindent where $\rho=0$ or $1$, $D\in\mathbb C$ with $\Imag D\geq 0$ and $\lambda\in\mathbb R^{\geq 0}$. The complex structure $J$ is abelian if and only if $\rho=0$. The Lie algebra $\frg$ is 2-step nilpotent with first Betti number $\geq 4$, i.e.
$\frg$ is isomorphic to $\frh_{2},\ldots,\frh_{6}$ or $\frh_{8}$.

\vskip.4cm

\textbf{Family II:} \quad $d\omega^1=0,\quad d\omega^2=\omega^{1\bar1},\quad d\omega^3=\rho\,\omega^{12} + B\,\omega^{1\bar2} + c\,\omega^{2\bar1}$,

\vskip.2cm

\noindent where $\rho=0$ or $1$, $B\in\mathbb C$ and $c\in\mathbb R^{\geq 0}$. Moreover, $(\rho, B, c)\neq (0,0,0)$. The complex structure $J$ is abelian if and only if $\rho=0$. The Lie algebra $\frg$ is isomorphic to $\frh_{7}$ or $\frh_{9},\ldots,\frh_{16}$.

\vskip.4cm

\textbf{Family III:} \quad $d\omega^1=0,\quad d\omega^2=\omega^{13} + \omega^{1\bar3},\quad d\omega^3=\varepsilon\,\omega^{1\bar 1} \pm i(\omega^{1\bar2} -\omega^{2\bar1})$,

\vskip.2cm

\noindent where $\varepsilon=0$ or $1$.  In this case, $J$ is a non-nilpotent complex structure. The corresponding Lie algebras are $\frh^-_{19}$ (for $\varepsilon=0$) and $\frh^+_{26}$ (for $\varepsilon=1$).

\vskip.4cm

Next, we show the cohomology groups $H_{\rm BC}^{p,q}(\frg,J)$ for any $J$ given by the Families I, II and III above. Notice that
small deformations of the Iwasawa manifold belong to Family I. It is clear that in all cases
$$
H^{3,0}_{\mathrm{BC}}=\langle[\omega^{123}]\rangle,\quad\quad H^{3,3}_{\mathrm{BC}}=\langle[\omega^{123\bar1\bar2\bar3}]\rangle.
$$
So, up to conjugation, it suffices to describe the Bott-Chern cohomology groups $H^{p,q}_{\mathrm{BC}}$ for $(p,q)=(1,0),(2,0),(1,1),(2,1),(2,2),(3,1)$ and $(3,2)$. In the Appendix we provide the precise dimension of these groups for each Lie algebra $\frg$ and for each complex structure $J$ on $\frg$, up to equivalence, by using the classification of complex structures given in~\cite{ABD,COUV,UV1}.

In what follows we will use the notation: \quad
$\delta_{\text{expression}}=\begin{cases}1,\quad \text{expression}=0,\\ 0,\quad\text{expression}\neq0.\end{cases}$

\vskip.5cm

\noindent\textbf{Cohomology groups of complex structures $J$ in Family I:}

\vskip.4cm

\noindent $H^{1,0}_{\mathrm{BC}}=\langle[\omega^1],\,[\omega^2]\rangle$, \quad\quad\quad $H^{2,0}_{\mathrm{BC}}=\langle[\omega^{12}],\,\delta_D\,[\omega^{13}]\rangle$,

\vskip.4cm

\noindent $H^{1,1}_{\mathrm{BC}}=\langle[\omega^{1\bar 1}],\,[\omega^{1\bar 2}],\,[\omega^{2\bar 1}],\,
[\omega^{2\bar 2}],\,\delta_{\rho}\,\delta_{D}\,[\omega^{1\bar 3}+\lambda\,\omega^{2\bar 3}],\,
\delta_{\rho}\,\delta_{D}\,[\omega^{3\bar 1}+\lambda\,\omega^{3\bar 2}],$

\vskip.2cm

\hskip.9cm $\delta_{\rho-1}\delta_{\lambda^2-1-|D|^2-2\Real D}[(D+1)\,\omega^{1\bar 3} + D\lambda\,\omega^{2\bar 3}
- \lambda\,\omega^{3\bar 1} - \bar D (D+1)\,\omega^{3\bar 2}]\rangle$,

\vskip.4cm

\noindent $H^{2,1}_{\mathrm{BC}}=\langle[\omega^{12\bar 1}],\,[\omega^{12\bar 2}],\,[\omega^{13\bar 2}],\,[\omega^{12\bar 3}-\rho\,\omega^{23\bar 2}],\,[\omega^{13\bar 1}-D\,\omega^{23\bar 2}],\,[\omega^{23\bar 1}+\lambda\,\omega^{23\bar 2}], \delta_{\rho}\,\delta_{\lambda}\,\delta_D\,[\omega^{13\bar3}]\rangle$,

\vskip.4cm

\noindent $H^{2,2}_{\mathrm{BC}}=\langle\delta_{2\Real D - \lambda^2 - \rho}[\omega^{12\bar1\bar2}],\,[\omega^{12\bar 1\bar3}],\,[\omega^{12\bar 2\bar3}],\,[\omega^{13\bar1\bar2}],\,[\omega^{23\bar 1\bar2}],$

\vskip.2cm

\hskip.9cm $\begin{cases}[\omega^{13\bar2\bar3}],\,[\omega^{23\bar1\bar3}],\,[\omega^{13\bar1\bar3} - D\,\omega^{23\bar2\bar3}],\,\, \lambda=0,\,D\in\mathbb R,\\[4pt]
[\omega^{13\bar2\bar3}+\omega^{23\bar1\bar3} +\lambda\,\omega^{23\bar2\bar3}],\,[\lambda\,\omega^{13\bar1\bar3}+\bar D\,\omega^{13\bar2\bar3} +D\,\omega^{23\bar1\bar3}],\, \text{other case}\end{cases}\rangle$.


\vskip.4cm

\noindent $H^{3,1}_{\mathrm{BC}}=\langle[\omega^{123\bar1}],\,[\omega^{123\bar2}],\,\delta_{\rho}\,[\omega^{123\bar3}]\rangle$,
\quad\quad\quad
$H^{3,2}_{\mathrm{BC}}=\langle[\omega^{123\bar1\bar2}],\,[\omega^{123\bar1\bar3}],\,[\omega^{123\bar2\bar3}]\rangle$.

\vskip.5cm

\noindent\textbf{Cohomology groups of complex structures $J$ in Family II:}

\vskip.4cm

\noindent $H^{1,0}_{\mathrm{BC}}=\langle[\omega^1]\rangle$,\quad\quad\quad $H^{2,0}_{\mathrm{BC}}=\langle[\omega^{12}],\,\delta_c\,[\omega^{13}]\rangle$,

\vskip.4cm

\noindent $H^{1,1}_{\mathrm{BC}}=\langle[\omega^{1\bar1}], [\omega^{1\bar2}], [\omega^{2\bar1}], \delta_{B-\rho}[\omega^{1\bar3}+\rho\,\omega^{2\bar2}],\, \delta_{B-\rho}[\omega^{3\bar1}+\rho\,\omega^{2\bar2}],$

\vskip.2cm

\hskip.9cm $(1-\delta_{B-\rho})[(B-\rho)\omega^{1\bar3}+(|B|^2-\rho)\,\omega^{2\bar2} + (\bar B-\rho)\,\omega^{3\bar1}]\rangle,$

\vskip.4cm

\noindent $H^{2,1}_{\mathrm{BC}}=\langle[\omega^{12\bar1}], [\omega^{12\bar2}],\,[\omega^{13\bar1}],\,  [B\,\omega^{12\bar3}+\rho\,\omega^{23\bar1}],\,[c\,\omega^{12\bar3} + \rho\,\omega^{13\bar2}],
\delta_{\rho-1}\delta_{B-1}\delta_c\,[\omega^{13\bar3} + \omega^{23\bar2}]\rangle$,

\vskip.4cm

\noindent $H^{2,2}_{\mathrm{BC}}=\langle[\omega^{12\bar1\bar3}], [\omega^{12\bar2\bar3}],\,[\omega^{13\bar1\bar2}],\,[\omega^{13\bar1\bar3}],\,[\omega^{23\bar1\bar2}],\,  \delta_{\rho-1}\delta_{B}\delta_c\,[\omega^{13\bar2\bar3}],\,\delta_{\rho-1}\delta_{B}\delta_c\,[\omega^{23\bar1\bar3}]$,

\vskip.2cm

\hskip.9cm $\delta_{|B|^2-c^2}[|B|\,\omega^{13\bar2\bar3}-\bar B\,\omega^{23\bar1\bar3}]\rangle$,

\vskip.4cm

\noindent $H^{3,1}_{\mathrm{BC}}=\langle[\omega^{123\bar1}],\,[\omega^{123\bar2}],\,\delta_{\rho}\,[\omega^{123\bar3}]\rangle$,
\quad\quad\quad $H^{3,2}_{\mathrm{BC}}=\langle[\omega^{123\bar1\bar2}],\,[\omega^{123\bar1\bar3}],\,[\omega^{123\bar2\bar3}]\rangle$.

\vskip.5cm

\noindent\textbf{Cohomology groups of complex structures $J$ in Family III:}

\vskip.4cm

\noindent $H^{1,0}_{\mathrm{BC}}=\langle[\omega^1]\rangle$,\quad\quad\quad $H^{2,0}_{\mathrm{BC}}=\langle[\omega^{12}]\rangle$,\quad\quad\quad $H^{1,1}_{\mathrm{BC}}=\langle[\omega^{1\bar1}],\,[\omega^{1\bar2} - \omega^{2\bar1}]\rangle$,

\vskip.4cm

\noindent $H^{2,1}_{\mathrm{BC}}=\langle[\omega^{13\bar1}],\,[\omega^{12\bar3}],\,[\omega^{13\bar3} \mp i\, \omega^{12\bar2}]\rangle$,

\vskip.4cm

\noindent $H^{2,2}_{\mathrm{BC}}=\langle[\omega^{12\bar1\bar3}],\,[\omega^{13\bar1\bar2}],\,[\omega^{13\bar2\bar3}+\omega^{23\bar1\bar3}],\, \delta_{\varepsilon}\, [\omega^{23\bar2\bar3}]\rangle$,

\vskip.4cm

\noindent $H^{3,1}_{\mathrm{BC}}=\langle[\omega^{123\bar1}],\, [\omega^{123\bar3}]\rangle$, \quad\quad\quad $H^{3,2}_{\mathrm{BC}}=\langle[\omega^{123\bar1\bar2}],\,[\omega^{123\bar2\bar3}]\rangle$.


\vskip.3cm

The above families together with the Bott-Chern cohomology of the Iwasawa manifold \cite{Schw} cover all the Bott-Chern cohomology groups $H^{p,q}_{\mathrm{BC}}(\frg,J)$ for any 6-dimensional $\frg$ and any $J$.

\vskip.3cm

If $M=\nilm$ is a $6$-dimensional nilmanifold endowed with an invariant complex structure $J$ admitting balanced or sG metrics then
the Lie algebra $\frg$ of $G$ must be isomorphic to $\frh_1,\ldots,\frh_6$ and $\frh_{19}^-$ \cite{COUV}, so in particular
either $J$ is complex-parallelizable or it belongs to Families I or III.
Using the classification results of~\cite{COUV}, in Table~1 we show the complex structures $J$ in Family I, up to equivalence, on $\frh_2,\ldots,\frh_6$ that admit balanced Hermitian metrics together with the dimension of their Bott-Chern cohomology groups.
Notice that the remaining cases admitting balanced metrics are, apart from the trivial case $\frh_1$, the Iwasawa manifold
and any complex structure on $\frh_{19}^-$, for which the Bott-Chern cohomology is obtained by taking $\varepsilon=0$
in Family III above.

\vskip.2cm

We finish this section with an application to the existence of sG metrics on certain 6-nilmanifolds. By Proposition~\ref{abelian-no-inyect} we know that if $J$ is abelian then \eqref{inyect} is not injective.
In contrast, we have:

\begin{proposition}\label{2step-inyect}
Let $M$ be a $6$-dimensional $2$-step nilmanifold endowed with an invariant complex structure $J$ belonging to Family I.
If $J$ is nilpotent but not abelian, then the map \eqref{inyect} is injective.
\end{proposition}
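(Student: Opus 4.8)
The plan is to reduce everything to the Lie algebra $(\frg,J)$ and then to recognize the kernel of \eqref{inyect} as the image of $\partial$ sitting inside $H^{3,2}_{\db}$, which I will show is absorbed by $\operatorname{im}\db$ precisely because $J$ is non-abelian. First I would justify the reduction: every Lie algebra carrying a Family~I structure is one of $\frh_2,\dots,\frh_6,\frh_8$, so in particular it is not $\frh_7$; hence by the reduction established at the beginning of this section (Theorem~\ref{Angella} together with \cite{R2}) the Dolbeault and Bott-Chern cohomologies are computed on $\frg$, and through the duality $H^{3,2}_{\rm A}\cong H^{1,0}_{\rm BC}$ the Aeppli cohomology is as well. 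Thus \eqref{inyect} may be analysed on $(\frg,J)$. Since every Family~I structure is nilpotent and is abelian exactly when $\rho=0$, the hypothesis ``nilpotent but not abelian'' forces $\rho=1$. (As a consistency check, one finds $h_{0,1}=h_{0,1}^{\rm BC}=2$ in this case, so the necessary condition noted before Corollary~\ref{abelian-no-inyect} holds with equality.)

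Next I would make the kernel explicit. A $(3,2)$-form has no $(4,\cdot)$ component, so it is automatically $\partial\db$-closed, and a direct check on the structure equations shows it is $\db$-closed too; thus $\bigwedge^{3,2}\frg^*$ is the full numerator of both $H^{3,2}_{\db}$ and $H^{3,2}_{\rm A}$. A class $[\eta]_{\db}$ lies in the kernel of \eqref{inyect} exactly when $\eta=\partial\alpha+\db\beta$, in which case $[\eta]_{\db}=[\partial\alpha]_{\db}$; since $\operatorname{im}\partial$ already lies in $\ker\db$, the kernel is the image of $\operatorname{im}\partial$ in the quotient $H^{3,2}_{\db}=\bigwedge^{3,2}\frg^*/\operatorname{im}\db$, and it vanishes if and only if
\[
\operatorname{im}\bigl(\partial\colon \bigwedge\nolimits^{2,2}\frg^*\to\bigwedge\nolimits^{3,2}\frg^*\bigr)
\ \subseteq\
\operatorname{im}\bigl(\db\colon \bigwedge\nolimits^{3,1}\frg^*\to\bigwedge\nolimits^{3,2}\frg^*\bigr).
\]
Proving this inclusion is the goal; note that it fails when $\rho=0$, in agreement with Corollary~\ref{abelian-no-inyect}.

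To compute the right-hand side I would use that $\omega^{123}$ is closed, so $\db\omega^{123\bar k}=-\,\omega^{123}\wedge\db\omega^{\bar k}$; among $\db\omega^{\bar1},\db\omega^{\bar2},\db\omega^{\bar3}$ only $\db\omega^{\bar3}=\rho\,\omega^{\bar1\bar2}$ survives, and with $\rho=1$ this yields $\operatorname{im}\db=\langle\omega^{123\bar1\bar2}\rangle$. For the left-hand side, the Family~I equations give that $d\omega^{ij}$ carries no $(3,0)$ part, so $\partial\omega^{ij\bar k\bar l}=\omega^{ij}\wedge(d\omega^{\bar k\bar l})^{(1,2)}$; each such $(1,2)$ part arises by wedging $\omega^{\bar1}$ or $\omega^{\bar2}$ with the $(1,1)$ part $-\omega^{1\bar1}-\lambda\,\omega^{2\bar1}-\bar D\,\omega^{2\bar2}$ of $d\omega^{\bar3}$, hence lies in $\langle\omega^{1\bar1\bar2},\omega^{2\bar1\bar2}\rangle$. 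Wedging these with the $(2,0)$-forms $\omega^{ij}$ can only produce multiples of $\omega^{123\bar1\bar2}$, so $\operatorname{im}\partial\subseteq\langle\omega^{123\bar1\bar2}\rangle=\operatorname{im}\db$, which gives the inclusion and hence the injectivity.

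The one piece of routine work will be the sign and index bookkeeping for $\partial$ on the nine generators $\omega^{ij\bar k\bar l}$, but the structural observation above delivers the conclusion without the explicit coefficients. The essential input is simply $\rho=1$, which is what makes $\operatorname{im}\db$ equal to the line $\langle\omega^{123\bar1\bar2}\rangle$ already containing $\operatorname{im}\partial$; this is exactly where the non-abelian hypothesis enters, and is, to my mind, the crux of the proposition.
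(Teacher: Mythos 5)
Your proposal is correct and takes essentially the same route as the paper: reduce to the Lie algebra level (legitimate since $\frg\not\cong\frh_7$), observe that ``nilpotent but not abelian'' forces $\rho=1$ in Family~I, and then compare the $(3,2)$-level Dolbeault and Aeppli cohomologies computed from the structure equations. Your reformulation of injectivity as the inclusion $\operatorname{im}\bigl(\partial\colon\bigwedge^{2,2}\frg^*\to\bigwedge^{3,2}\frg^*\bigr)\subseteq\operatorname{im}\bigl(\db\colon\bigwedge^{3,1}\frg^*\to\bigwedge^{3,2}\frg^*\bigr)=\langle\omega^{123\bar1\bar2}\rangle$ is just a cleaner packaging of the paper's direct calculation, which finds both $H^{3,2}_{\db}(\frg)$ and $H^{3,2}_{\rm A}(\frg)$ equal to $\langle[\omega^{123\bar1\bar3}],[\omega^{123\bar2\bar3}]\rangle$.
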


\begin{proof}
First notice that the Lie algebras underlying $M$ are $\frh_2,\frh_4,\frh_5,\frh_6$, because any $J$
on $\frh_1,\frh_3$ and $\frh_8$ is abelian.
Since $\frg\not\cong\frh_7$, we have
$H^{3,2}_{\db}(M)\cong H^{3,2}_{\db}(\frg)$
and
$H^{3,2}_{\rm A}(M)=H^{3,2}_{\rm A}(\frg)$.
A direct calculation from the equations in Family I with $\rho=1$ shows that
$
H^{3,2}_{\db}(\frg)=\langle [\omega^{123\bar1\bar3}],\,[\omega^{123\bar2\bar3}] \rangle
$
and
$
H^{3,2}_{\rm A}(\frg)=\langle [\omega^{123\bar1\bar3}],\,[\omega^{123\bar2\bar3}] \rangle$,
so the natural map $H^{3,2}_{\db}(\frg)\longrightarrow H^{3,2}_{\rm A}(\frg)$ is injective.
\end{proof}

This result explains why on a nilmanifold with underlying Lie algebra isomorphic to $\frh_2,\frh_4,\frh_5$ or $\frh_6$, any invariant $J$-Hermitian metric with respect to a non-abelian nilpotent $J$ is sG~\cite[Proposition 7.3 and Remark 7.4]{COUV}. In fact, any invariant Hermitian metric is Gauduchon, so the injectivity of \eqref{inyect} implies that it is automatically sG.
The classification of non-abelian nilpotent complex structures on $\frh_2,\ldots,\frh_6$ and their Bott-Chern cohomology is given in the Appendix.
Notice that with respect to an abelian complex structure, a metric is sG if and only if it is balanced \cite{COUV}, so the classification of abelian complex structures admitting sG is already given in Table~1.

\vskip.5cm


{\small
\renewcommand{\arraystretch}{1.3}
\begin{tabular}{|c|c|c|c|c|c|c|c|c|c|c|c|}
\hline
&\multicolumn{4}{|c|}{$J$ in Family I admitting balanced metric} & \multicolumn{7}{|c|}{Bott-Chern cohomology of $J$}\\
\hline
$\frg$ &$\rho$ & $\lambda$ &\multicolumn{2}{|c|}{$D=x+iy$} & $h_{1,0}^{\mathrm{BC}}$ & $h_{2,0}^{\mathrm{BC}}$ & $h_{1,1}^{\mathrm{BC}}$ & $h_{2,1}^{\mathrm{BC}}$ & $h_{3,1}^{\mathrm{BC}}$ & $h_{2,2}^{\mathrm{BC}}$ & $h_{3,2}^{\mathrm{BC}}$\\
\hline\hline

\multirow{4}{*}{$\frh_2$} &\multirow{4}{*}{1} & \multirow{4}{*}{1} & \multirow{4}{*}{$y\!>\!0$} & $x\!=\!-1\pm\sqrt{1-y^2}$ & \multirow{4}{*}{2} & \multirow{4}{*}{1} & \multirow{2}{*}{5} & \multirow{4}{*}{6} & \multirow{4}{*}{2} & \multirow{4}{*}{6} &\multirow{4}{*}{3}\\

&& & & $x \!<\! \frac14-y^2$ & & & & & & & \\ \cline{5-5}\cline{8-8}

&& & & $x\neq -1\pm\sqrt{1-y^2}$ & & & \multirow{2}{*}{4} & & & & \\

&& & & $x \!<\! \frac14-y^2$ & & & & & & & \\ \hline\hline

$\frh_3$ & 0 & 0 & \multicolumn{2}{|c|}{$x\!=\!-1$, $y\!=\!0$} & 2 & 1 & 4 & 6 & 3 & 7 & 3 \\ \hline\hline

\multirow{2}{*}{$\frh_4$} &\multirow{2}{*}{1} & \multirow{2}{*}{1} & \multirow{1}{*}{$x\!\not=\! 0\!=\!y$} & $x\!=\!-2$ & \multirow{2}{*}{2} & \multirow{2}{*}{1} & \multirow{1}{*}{5} & \multirow{2}{*}{6} & \multirow{2}{*}{2} & \multirow{2}{*}{6} & \multirow{2}{*}{3}\\ \cline{5-5} \cline{8-8}

& & & $x\!<\!\frac14$ & $x\!\not=\!-2$ & & & 4 & & & & \\  \hline\hline

&\multirow{2}{*}{0} & \multirow{2}{*}{1} & \multirow{2}{*}{$y\!=\!0$} & $x\!=\!0$ & \multirow{2}{*}{} & \multirow{1}{*}{2} & \multirow{1}{*}{6} & \multirow{2}{*}{} & \multirow{2}{*}{3} & \multirow{2}{*}{6} & \multirow{2}{*}{}\\ \cline{5-5} \cline{7-7} \cline{8-8}

& & & & $0\!<\!x\!<\!\frac14$ & & \multirow{2}{*}{1} & \multirow{10}{*}{4} & & & & \\  \cline{2-5} \cline{10-11}


&  & 0 & $y\!=\!0$ & $-\frac14\!<\!x\!<\!0$ &  & & &  &  & 7 & \\ \cline{3-5} \cline{7-7} \cline{11-11}

 &  & \multirow{2}{*}{$0\!<\!\lambda^2\!<\!\frac{1}{2}$} & \multirow{2}{*}{$x\!=\!0$} & $y\!=\!0$ & \multirow{2}{*}{} & \multirow{1}{*}{2} & & \multirow{2}{*}{} &  & & \multirow{2}{*}{}\\ \cline{5-5} \cline{7-7}

& & & & $0\!<\!y\!<\!\frac{\lambda^2}{2}$ &  & 1 & & & & & \\  \cline{3-5} \cline{7-7}

$\frh_5$ &  & \multirow{2}{*}{$\frac{1}{2}\leq\lambda^2\!<\!1$} & \multirow{2}{*}{$x\!=\!0$} & $y\!=\!0$ & \multirow{2}{*}{2} & \multirow{1}{*}{2} & & \multirow{2}{*}{6} & \multirow{2}{*}{2} & \multirow{2}{*}{6} & \multirow{2}{*}{3}\\ \cline{5-5} \cline{7-7}

& 1 & & & $0\!<\!y\!<\!\frac{1-\lambda^2}{2}$ &  & 1 & & & & & \\  \cline{3-5} \cline{7-7}

&  & \multirow{2}{*}{$1\!<\!\lambda^2\leq 5$} & \multirow{2}{*}{$x\!=\!0$} & $y\!=\!0$ & \multirow{2}{*}{} & \multirow{1}{*}{2} & & \multirow{2}{*}{} & & & \multirow{2}{*}{}\\ \cline{5-5} \cline{7-7}

& & & & $0\!<\!y\!<\!\frac{\lambda^2-1}{2}$ &  & 1 & & & & & \\  \cline{3-5} \cline{7-7}

&  & \multirow{3}{*}{$\lambda^2\!>\!5$} & \multirow{3}{*}{$x\!=\!0$} & $y\!=\!0$ & \multirow{3}{*}{} & \multirow{1}{*}{2} & & \multirow{3}{*}{} & & & \multirow{3}{*}{}\\ \cline{5-5} \cline{7-7}

& & & & $0\!<\!y\!<\!\frac{\lambda^2-1}{2},\quad y\neq\sqrt{\lambda^2-1}$ & & \multirow{2}{*}{1} & & & & & \\ \cline{5-5}  \cline{8-8}

& & & & $y\!=\!\sqrt{\lambda^2-1}\!<\!\frac{\lambda^2-1}{2}$ &  &  & 5 & & & & \\  \hline\hline

$\frh_6$ & 1 & 1 & \multicolumn{2}{|c|}{$x\!=\!0\!=\!y$} & 2 & 2 & 5 & 6 & 2 & 6 & 3 \\ \hline
\end{tabular}
}

\vskip.1cm

\centerline{\textbf{Table 1.} Classification of complex structures on $\frh_2,\ldots,\frh_6$ admitting}

\centerline{balanced Hermitian metric and their Bott-Chern cohomology}

\section{Bott-Chern cohomology and holomorphic deformations}\label{BC-hol-def}

In this section we use the explicit description of the Bott-Chern cohomology groups obtained in Section~\ref{calcul}
to show some aspects of their behaviour under holomorphic deformation.

Let $\Delta$ be an open disc around the origin in $\mathbb{C}$. Following \cite[Definition 1.12]{Pop2},
a given property $\mathcal{P}$ of a compact complex manifold is said to be \emph{open}
under holomorphic deformations if for every holomorphic family of compact complex manifolds
$(M,J_a)_{a\in \Delta}$ and for every $a_0\in \Delta$ the following implication holds:

\medskip

$(M,J_{a_0})$ has property $\mathcal{P}$ $\Longrightarrow$ $(M,J_a)$ has property $\mathcal{P}$ for all $a\in\Delta$ sufficiently close to $a_0$.

\medskip

A given property $\mathcal{P}$ of a compact complex manifold is said to be \emph{closed} under holomorphic deformations if for every holomorphic family of compact complex manifolds $(M,J_a)_{a\in \Delta}$ and for every $a_0\in \Delta$ the following implication holds:

\medskip

$(M,J_{a})$ has property $\mathcal{P}$ for all $a\in \Delta\backslash \{a_0\}$ $\Longrightarrow$ $(M,J_{a_0})$ has property $\mathcal{P}$.

\medskip

Concerning the existence of special Hermitian metrics and holomorphic deformations, Alessandrini and Bassanelli proved in \cite{AB}
that the balanced property of compact complex manifolds is not deformation open.
In contrast to the balanced case, Popovici has proved in \cite{Pop1} that the sG property of compact complex manifolds is open under
holomorphic deformations. However, in \cite{COUV} it is shown that the sG property and the balanced property of compact complex manifolds are not closed under holomorphic deformations
(see \cite{Pop2} for a discussion on deformation openness and closedness of various classes
of compact complex manifolds).

On the other hand, Popovici has proved that the existence of sG metric is guaranteed under strong additional conditions, concretely:

\begin{proposition}\cite[Proposition 4.1]{Pop0}\label{Popovici}
Let $M_a$ be a complex analytic family of compact complex manifolds.
If the $\partial \db$-lemma holds on $M_a$ for every $a\in \Delta\backslash \{0\}$,
then $M_0$ has an sG metric.
\end{proposition}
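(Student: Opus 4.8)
The plan is to obtain the sG metric on the central fibre $M_0$ as a limit of Gauduchon data on the nearby fibres. Write $n$ for the complex dimension. By Gauduchon's theorem \cite{Gau}, restricting a fixed smooth Hermitian metric on the total space of the family and solving the (elliptic, parameter-dependent) Gauduchon equation fibrewise yields a smooth family of Gauduchon metrics $\gamma_a$, $a\in\Delta$, including $a=0$. For each $a$ the form $\partial\gamma_a^{\,n-1}$ is $d$-closed and defines a class $[\partial\gamma_a^{\,n-1}]_{\db}\in H^{n,n-1}_{\db}(M_a)$, and $\gamma_a$ is sG precisely when this class vanishes. As noted just before Corollary~\ref{abelian-no-inyect}, this class always maps to $0$ under \eqref{inyect}; hence for $a\neq 0$, where the $\partial\db$-lemma makes \eqref{inyect} injective, we have $[\partial\gamma_a^{\,n-1}]_{\db}=0$, i.e. $\partial\gamma_a^{\,n-1}=\db\Gamma_a$ for some $(n,n-2)$-form $\Gamma_a$. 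The aim is to produce such a primitive at $a=0$ as well.

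First I would set this up so that compactness is automatic. Solving $\db\Gamma_a=\partial\gamma_a^{\,n-1}$ by the minimal $L^2$-solution $\Gamma_a=\db^\ast G_a(\partial\gamma_a^{\,n-1})$, where $G_a$ is the Green operator of the $\db$-Laplacian on $(n,n-1)$-forms, one has $\partial\gamma_0^{\,n-1}=\lim_{a\to 0}\db\Gamma_a$ in the smooth topology since $\gamma_a$ varies smoothly. If the $\Gamma_a$ stay bounded (say in $L^2$) as $a\to 0$, a weak limit $\Gamma_0$ satisfies $\db\Gamma_0=\partial\gamma_0^{\,n-1}$, so $\gamma_0$ is sG and we are done. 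Equivalently, and this is the route I would ultimately take because it makes the needed compactness free, one argues by contradiction via Popovici's Hahn-Banach duality for the sG cone \cite{Pop1}: if $M_0$ carried no sG metric there would be a nonzero positive $d$-exact $(1,1)$-current $T_0$ of sG type on $M_0$, and the task becomes to show that such an obstruction is incompatible with the $\partial\db$-lemma holding on all of $\Delta\setminus\{0\}$, since positive currents of bounded mass are weak-$\ast$ compact.

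The step I expect to be the main obstacle is exactly this passage to the limit as $a\to 0$. The Bott-Chern (and Dolbeault) numbers are only upper semicontinuous in $a$ \cite{Schw}, and in the cases of interest they genuinely jump at $a=0$: new harmonic representatives appear, so the smallest positive eigenvalue $\lambda_a$ of the relevant $\db$-Laplacian tends to $0$ and the naive bound $\|\Gamma_a\|_{L^2}\lesssim \|\partial\gamma_a^{\,n-1}\|_{L^2}/\sqrt{\lambda_a}$ degenerates. In the dual current picture the same difficulty reappears as the danger that the limiting current vanishes (its mass escaping) or loses positivity or exactness in the weak-$\ast$ limit. The crux is therefore to rule this out: one must show that $\partial\gamma_a^{\,n-1}$ becomes suitably orthogonal to the newly emerging kernel (equivalently, that the obstruction current cannot degenerate), and it is precisely here that the hypothesis that the $\partial\db$-lemma holds on the \emph{entire} punctured disc $\Delta\setminus\{0\}$, and not merely at one point, is indispensable.
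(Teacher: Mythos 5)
This proposition is quoted by the paper from \cite[Proposition 4.1]{Pop0}; the paper itself supplies no proof, so your attempt has to be measured against Popovici's original argument. Your setup is correct as far as it goes: Gauduchon metrics $\gamma_a$ can indeed be chosen to depend smoothly on $a$ \cite{Gau}, the remark preceding Corollary~\ref{abelian-no-inyect} shows that $[\partial\gamma_a^{\,n-1}]_A=0$ in $H^{n,n-1}_{\rm A}(M_a)$, and the $\partial\db$-lemma on $M_a$, $a\neq 0$, makes \eqref{inyect} injective, so each $\gamma_a$, $a\neq0$, is sG. Moreover, your second route --- the Hahn--Banach characterization of non-sG manifolds by nonzero $d$-exact positive $(1,1)$-currents --- is precisely the tool on which Popovici's proof is built. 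Up to this point you are reconstructing the correct framework.

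The problem is that everything after this point is the proposition, and you leave it open. Your text says explicitly that the passage to the limit is ``the main obstacle'' and that ``the crux is therefore to rule this out''; naming the crux is not proving it, and the gap is not a technicality that a referee could fill in by routine means. Indeed, both limiting schemes you propose, if they worked on the strength of the information you actually feed them (namely, that the fibres $M_a$, $a\neq0$, are sG), would prove that the sG property is closed under holomorphic deformations --- and that statement is \emph{false}: this very paper (Corollary~\ref{no-Popovici-1} and Theorem~\ref{no-Popovici-2}, following \cite{COUV}) exhibits families which for $a\neq 0$ are balanced, satisfy $\mathcal{F}_2$ and $\mathcal{K}$ and have degenerate Fr\"olicher sequence, yet whose central fibre admits no sG metric. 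Concretely: the uniform $L^2$ bound on the minimal solutions $\Gamma_a$ genuinely fails when Bott--Chern/Hodge numbers jump at $a=0$ (as they do, cf.\ Remark~\ref{dim-changing} and \cite{Schw}), and the weak-$\ast$ limit of obstruction currents genuinely can lose mass, positivity or exactness. So the $\partial\db$-lemma hypothesis must enter the limiting argument itself in a quantitative way, and your proposal never specifies how; in \cite{Pop0} supplying exactly this missing mechanism is the substance of the proof. As it stands, the proposal is an accurate diagnosis of the difficulty, not a proof of the proposition.
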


An interesting problem is if the conclusion in the above proposition holds under conditions weaker than the $\partial \db$-lemma.

Recently, Angella and Tomassini have proved \cite{AT} that for a compact complex manifold $M$
$$
\sum_{p+q=k} \left( h_{p,q}^{\mathrm{BC}}(M) + h_{n-p,n-q}^{\mathrm{BC}}(M) \right) \geq 2 b_k(M),\quad\quad 0\leq k\leq 2n,
$$
where $b_k(M)$ denotes the $k$-th Betti number of $M$, and that all the inequalities are equalities if and only if $M$ satisfies the $\partial \db$-lemma.

Let us denote by $\mathbf{f}_k(M)$ the non-negative integer given by
$$
\mathbf{f}_k(M)=\sum_{p+q=k} \left( h_{p,q}^{\rm BC}(M) + h_{n-p,n-q}^{\rm BC}(M) \right) - 2 b_k(M).
$$

For each $0\leq k\leq n$, we consider the property
$$
\mathcal{F}_k=\{ \mbox{the compact complex manifold satisfies } \mathbf{f}_k=0 \}.
$$
By \cite{AT} the compact complex manifold $M$ satisfies the $\partial \db$-lemma if and only if $M$ has the property $\mathcal{F}_k$ for any $k\leq n$.

For each $k$, the property $\mathcal{F}_k$ is open under holomorphic deformation.
In fact, this follows from \cite{AT} and the fact that the dimensions
$h_{p,q}^{\mathrm{BC}}(M_a)$ are upper-semi-continuous
functions at $a$ \cite{Schw}. Therefore, we have:

\begin{proposition}\label{Fk-cor}
Let $M$ be a compact complex manifold for which the property $\mathcal{F}_k$ holds, and let $M_a$ be a small
deformation of $M$.
Then, for sufficiently small $a$ the manifold $M_a$ has the property $\mathcal{F}_k$, $h_{p,q}^{\rm BC}(M_a)=h_{p,q}^{\rm BC}(M)$ and
$h_{n-p,n-q}^{\rm BC}(M_a)=h_{n-p,n-q}^{\rm BC}(M)$ for any $(p,q)$ such that $p+q=k$.
\end{proposition}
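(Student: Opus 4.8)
The plan is to prove Proposition~\ref{Fk-cor} by combining two ingredients already available in the excerpt: the Angella--Tomassini characterization of the $\partial\db$-lemma through the non-negative invariants $\mathbf{f}_k$, and the upper-semicontinuity of the Bott-Chern numbers $h^{\rm BC}_{p,q}$ under holomorphic deformation. First I would recall that by definition $\mathbf{f}_k(M)=\sum_{p+q=k}\left(h^{\rm BC}_{p,q}(M)+h^{\rm BC}_{n-p,n-q}(M)\right)-2b_k(M)$, and that by \cite{AT} this quantity is always non-negative; the property $\mathcal{F}_k$ is exactly the vanishing $\mathbf{f}_k=0$.

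The core observation is that $b_k$ is a topological invariant, hence constant along the smooth family $M_a$: since all $M_a$ share the same underlying differentiable manifold $M$, we have $b_k(M_a)=b_k(M)$ for every $a$. Therefore, for fixed $k$, the invariant reduces to $\mathbf{f}_k(M_a)=\sum_{p+q=k}\left(h^{\rm BC}_{p,q}(M_a)+h^{\rm BC}_{n-p,n-q}(M_a)\right)-2b_k(M)$, in which only the Bott-Chern numbers vary. Now I would invoke the upper-semicontinuity result of Schweitzer \cite{Schw}: each function $a\mapsto h^{\rm BC}_{p,q}(M_a)$ is upper-semicontinuous, so for $a$ sufficiently close to the central fibre one has $h^{\rm BC}_{p,q}(M_a)\leq h^{\rm BC}_{p,q}(M)$ for all the finitely many pairs $(p,q)$ with $p+q=k$ and their complementary pairs $(n-p,n-q)$. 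Summing these finitely many inequalities yields $\mathbf{f}_k(M_a)\leq \mathbf{f}_k(M)=0$. Combined with the non-negativity $\mathbf{f}_k(M_a)\geq 0$ from \cite{AT}, this forces $\mathbf{f}_k(M_a)=0$, which is precisely property $\mathcal{F}_k$ for $M_a$.

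It remains to upgrade the individual inequalities to equalities, which delivers the constancy of the Bott-Chern numbers claimed in the statement. Here the argument is a standard ``sum of non-positive terms that sums to zero'' device: set $\varepsilon_{p,q}=h^{\rm BC}_{p,q}(M)-h^{\rm BC}_{p,q}(M_a)\geq 0$ for each relevant $(p,q)$. The equality $\mathbf{f}_k(M_a)=\mathbf{f}_k(M)$ obtained above is equivalent to $\sum_{p+q=k}\left(\varepsilon_{p,q}+\varepsilon_{n-p,n-q}\right)=0$, a sum of non-negative integers vanishing only when each summand vanishes; hence $\varepsilon_{p,q}=0$ for all $(p,q)$ with $p+q=k$, giving $h^{\rm BC}_{p,q}(M_a)=h^{\rm BC}_{p,q}(M)$ and likewise $h^{\rm BC}_{n-p,n-q}(M_a)=h^{\rm BC}_{n-p,n-q}(M)$.

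I do not expect a genuine obstacle in this proof; the only point requiring slight care is making sure the ``sufficiently small $a$'' is uniform. Since there are only finitely many pairs $(p,q)$ in play for a fixed $k$ (and a fixed complex dimension $n$), one takes the intersection of the finitely many neighbourhoods provided by upper-semicontinuity for each pair, which is again a neighbourhood of the central fibre; thus the single threshold ``$a$ sufficiently small'' suffices. The mild subtlety is simply that upper-semicontinuity is a statement about each $h^{\rm BC}_{p,q}$ separately, so one must quantify the neighbourhood before summing, rather than after — but this is routine.
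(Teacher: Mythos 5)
Your proof is correct and follows exactly the argument the paper intends: it combines the Angella--Tomassini non-negativity of $\mathbf{f}_k$ with Schweitzer's upper semicontinuity of the $h_{p,q}^{\rm BC}$, then squeezes $\mathbf{f}_k(M_a)$ between $0$ and $\mathbf{f}_k(M)=0$ and uses the vanishing of a sum of non-negative terms to get constancy of each Bott-Chern number. The paper only sketches this in the sentence preceding the proposition; you have simply filled in the same argument in detail.
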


It is unknown if the $\partial \db$-lemma is closed under holomorphic deformation.
Concerning a given property $\mathcal{F}_k$, in the following proposition we show that it is non closed under holomorphic deformation.
Concretely, using the results of the previous section, for $k=2$ we have:

\begin{proposition}\label{no-closed}
Let $(M,J_0)$ be a compact nilmanifold with underlying Lie algebra~$\frh_4$ endowed with an abelian complex structure $J_0$.
Then, there is a holomorphic family of compact complex manifolds $(M,J_a)_{a\in \Delta}$,
where $\Delta=\{ a\in \mathbb{C}\mid |a|<\frac13 \}$, such that $(M,J_a)$ satisfies the property $\mathcal{F}_2$
for each $a\in \Delta\backslash \{0\}$,
but $(M,J_0)$ does not satisfy $\mathcal{F}_2$.
\end{proposition}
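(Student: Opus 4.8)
The plan is to exhibit an explicit holomorphic family $(M,J_a)$ interpolating between an abelian complex structure $J_0$ on $\frh_4$ and non-abelian structures for $a\neq 0$, and then to compute the invariant $\mathbf{f}_2$ on both ends using the Bott-Chern data tabulated in Section~\ref{calcul}. Since $\mathbf{f}_2$ depends only on the Bott-Chern numbers $h_{p,q}^{\mathrm{BC}}$ (via $n=3$) and the Betti number $b_2$, and the latter is a topological invariant of the underlying nilmanifold hence constant in $a$, the whole computation reduces to reading off dimensions from the generators listed for Family~I. I would first fix the starting point: take $J_0$ abelian on $\frh_4$, which in the notation of Family~I corresponds to $\rho=0$ together with the appropriate values of $\lambda$ and $D$ that land $\frg$ on $\frh_4$. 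Then I would write down a one-parameter deformation $\omega^j_a=\omega^j+a\,\overline{\omega^{k}}$ (a small perturbation of the $(1,0)$-coframe by $(0,1)$-forms) chosen so that for $a\neq 0$ the structure becomes non-nilpotent-abelian, i.e. acquires $\rho=1$, while remaining in Family~I on $\frh_4$; the constraint $|a|<\tfrac13$ is exactly what keeps the deformed forms a genuine $(1,0)$-coframe and keeps the structure equations inside the admissible parameter region.

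The core of the argument is then a dimension count of $\mathbf{f}_2$ at the two types of fibers. For $k=2$ and $n=3$ one has
$$
\mathbf{f}_2(M)=\sum_{p+q=2}\left(h_{p,q}^{\mathrm{BC}}(M)+h_{3-p,3-q}^{\mathrm{BC}}(M)\right)-2b_2(M),
$$
so expanding the sum over $(p,q)\in\{(2,0),(1,1),(0,2)\}$ and using the conjugation symmetry $h_{q,p}^{\mathrm{BC}}=h_{p,q}^{\mathrm{BC}}$, this becomes
$$
\mathbf{f}_2(M)=2\,h_{2,0}^{\mathrm{BC}}(M)+h_{1,1}^{\mathrm{BC}}(M)+h_{2,2}^{\mathrm{BC}}(M)+2\,h_{3,1}^{\mathrm{BC}}(M)-2b_2(M).
$$
For the deformed fibers ($a\neq 0$, so $\rho=1$) I would read the relevant dimensions off the Family~I generator lists, observing in particular that with $\rho=1$ the $\delta_\rho$-controlled generators in $H^{1,1}_{\mathrm{BC}}$, $H^{2,2}_{\mathrm{BC}}$ and $H^{3,1}_{\mathrm{BC}}$ drop out or switch, lowering the total so that $\mathbf{f}_2(M,J_a)=0$. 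For the central fiber ($a=0$, abelian, so $\rho=0$) the same generator lists produce strictly larger dimensions in precisely the $\delta_\rho$-sensitive positions, and the count yields $\mathbf{f}_2(M,J_0)=2>0$. Since the abelian $J_0$ on $\frh_4$ also appears in the classification underlying Table~1, I would cross-check the individual Bott-Chern numbers against that table to confirm the arithmetic.

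The main obstacle is not the cohomology count, which is mechanical once the generators of Section~\ref{calcul} are in hand, but rather verifying that the chosen deformation $J_a$ genuinely stays a holomorphic family with the correct Lie algebra and parameter values. Concretely, the delicate step is to check that the perturbed coframe $\{\omega^j_a\}$ satisfies structure equations that remain of Family~I form on $\frh_4$ for all $|a|<\tfrac13$, that the integrability $\bar\partial$-condition (nilpotency in Salamon's sense) persists, and that $\rho$ really jumps from $0$ to $1$ as $a$ leaves the origin so that $J_a$ is non-abelian off the central fiber. I would handle this by exploiting the equivalence classification of~\cite{COUV}: express $d\omega^j_a$ in terms of $d\omega^j$, collect the $(2,0)$, $(1,1)$ components, and identify the resulting triple $(\rho,\lambda,D)$ as a curve in the Family~I parameter space that passes through the $\rho=0$ locus exactly at $a=0$. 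Once this family is validated, Theorem~\ref{Angella} guarantees the Bott-Chern cohomology is computed at the Lie-algebra level for every fiber (as $\frh_4\not\cong\frh_7$), and the two endpoint counts complete the proof.
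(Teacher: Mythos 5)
Your overall route---an explicit deformation inside Family~I that switches $\rho$ from $0$ to $1$, followed by a count of $\mathbf{f}_2$ from the generator lists of Section~\ref{calcul} and the Appendix---is the same as the paper's, and your reduction $\mathbf{f}_2=2h^{\rm BC}_{2,0}+h^{\rm BC}_{1,1}+2h^{\rm BC}_{3,1}+h^{\rm BC}_{2,2}-2b_2$ is correct. However, two of your steps fail as stated. First, the radius $\frac13$ has nothing to do with the deformed forms remaining a genuine coframe or with staying in an admissible parameter region: the paper's family is a well-defined holomorphic family of invariant complex structures on $\frh_4$ for all $|a|<1$. The reason one must shrink to $|a|<\frac13$ is purely cohomological. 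By \cite{COUV}, the normalized Family~I parameters of $J_a$ are $\rho=\lambda=1$ and $D=\frac{|a|^2-1}{4|a|^2}$, and this curve hits $D=-2$ exactly at $|a|=\frac13$; there $\delta_{\rho-1}\delta_{\lambda^2-1-|D|^2-2\Real D}$ switches on an extra generator of $H^{1,1}_{\rm BC}$, so $h^{\rm BC}_{1,1}$ jumps from $4$ to $5$ and $\mathbf{f}_2(M,J_a)=1\neq 0$. Thus $\mathcal{F}_2$ genuinely fails on the circle $|a|=\frac13$, and detecting this jump is the crux of the proposition; your outline, which presupposes $\mathbf{f}_2(M,J_a)=0$ for every $a\neq 0$ in the disc, misses the mechanism that forces the disc to have radius exactly $\frac13$.

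Second, your ansatz $\omega^j_a=\omega^j+a\,\overline{\omega^{k}}$ cannot produce the required family. Writing $J_0$ as $d\eta^3=\frac{i}{2}\eta^{1\bar1}+\frac12\eta^{1\bar2}+\frac12\eta^{2\bar1}$, the Kuranishi parameter space of \cite{MPPS,KS} imposes $i(1+\Phi^3_3)\Phi^1_2=(1-\Phi^3_3)(\Phi^1_1-\Phi^2_2)$, and a case-by-case check shows that every single-term perturbation $\eta^j\mapsto\eta^j+a\,\eta^{\bar k}$ either violates this constraint (e.g.\ $\mu^1=\eta^1+a\,\eta^{\bar2}$, for which $d\mu^3$ acquires a nonzero $(0,2)$-component) or satisfies it while leaving the structure abelian (e.g.\ $\mu^2=\eta^2+a\,\eta^{\bar1}$ or any perturbation of $\eta^3$, since abelianity only requires $\Phi^1_2=0$ and $\Phi^1_1=\Phi^2_2$), so $\rho$ never becomes $1$. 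The two-term perturbation $\mu^1=\eta^1+a\,\eta^{\bar1}-ia\,\eta^{\bar2}$ used in the paper is needed precisely to satisfy integrability while breaking abelianity. A smaller inaccuracy: for this family the only Bott-Chern number separating $J_0$ from the nearby $J_a$ is $h^{\rm BC}_{3,1}$, which drops from $3$ to $2$ when $\rho$ becomes $1$; since $D=\frac14\neq 0$ for the abelian $J_0$, the $\delta_\rho\delta_D$ generators of $H^{1,1}_{\rm BC}$ are already absent at $a=0$, and one has $h^{\rm BC}_{1,1}=4$ and $h^{\rm BC}_{2,2}=6$ on both sides, contrary to your claim that several $\delta_\rho$-sensitive positions change.
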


\begin{proof}
There is only one abelian complex structure on $M$ up to isomorphism, whose Bott-Chern cohomology
is given in the table for $\frh_4$ in Section~\ref{apendice}. Since $b_2(M)=8$, one has that $\mathbf{f}_2(M,J_0)=2$.

Using the Kuranishi's method, Maclaughlin, Pedersen, Poon and Salamon proved in \cite{MPPS} that $J_0$ has a locally complete family of deformations consisting entirely of invariant complex structures and obtained the deformation parameter space in terms of invariant forms. Writing the structure equations of $J_0$ as
$$
d\eta^1=d\eta^2=0,\quad d\eta^3=\frac{i}{2} \eta^{1\bar{1}} +\frac12 \eta^{1\bar{2}} +\frac12 \eta^{2\bar{1}},
$$
by~\cite{KS,MPPS} any complex structure sufficiently near to $J_0$ has a basis of $(1,0)$-forms such that
\begin{equation}\label{def-space}
\left\{
\begin{array}{lcl}
\mu^1 \zzz & = &\zzz \eta^1 + \Phi^1_1\, \eta^{\bar{1}} + \Phi^1_2\, \eta^{\bar{2}},\\[4pt]
\mu^2 \zzz & = &\zzz \eta^2 +\Phi^2_1\, \eta^{\bar{1}} + \Phi^2_2\, \eta^{\bar{2}},\\[4pt]
\mu^3 \zzz & = &\zzz \eta^3 +\Phi^3_1\, \eta^{\bar{1}} + \Phi^3_2\, \eta^{\bar{2}} + \Phi^3_3\, \eta^{\bar{3}},
\end{array}
\right.
\end{equation}
where $i(1+\Phi^3_3)\Phi^1_2=(1-\Phi^3_3)(\Phi^1_1-\Phi^2_2)$ and the coefficients $\Phi^i_j$ are sufficiently small. The
complex structure remains abelian if and only if $\Phi^1_2=0$ and $\Phi^1_1=\Phi^2_2$.

We will consider the particular holomorphic deformation $J_a$ given in \cite{COUV}
by shrinking conveniently the radius of the deformation disc.
For each $a\in \mathbb{C}$ such that $|a|<1$, we consider the basis of (1,0)-forms $\{\mu^1,\mu^2,\mu^3\}$ given by
$$\mu^1=\eta^1 +a \eta^{\bar{1}} -i a \eta^{\bar{2}},\quad \mu^2=\eta^2,\quad \mu^3=\eta^3.$$
Notice that this choice corresponds to $\Phi^1_1=a$, $\Phi^1_2=-ia$ and $\Phi^2_1=\Phi^2_2=\Phi^3_1=\Phi^3_2=\Phi^3_3=0$ in
the parameter space \eqref{def-space}.
Following \cite[Theorem 7.9]{COUV}, for any $a\in \mathbb{C}$ such that $0<|a|<1$ the complex structure $J_a$ is nilpotent but not abelian, and there is a (1,0)-basis $\{\tau^1,\tau^2,\tau^3\}$ such that the structure equations for $J_a$ are
\begin{equation}\label{ecus-Ja}
d\tau^1=d\tau^2=0,\quad d\tau^3=\tau^{12} + \tau^{1\bar1} +\frac{1}{|a|}\, \tau^{1\bar2} + \frac{1-|a|^2}{4|a|^2}\,\tau^{2\bar2}.
\end{equation}
Moreover, using \cite[Proposition 3.7]{COUV} there is a basis $\{\omega^1,\omega^2,\omega^3\}$ of $(1,0)$-forms for which \eqref{ecus-Ja} can
be reduced to the normalized structure equations
\begin{equation}\label{ecus-normal-Ja}
d\omega^1=d\omega^2=0,\quad d\omega^3=\omega^{12} + \omega^{1\bar1} + \omega^{1\bar2} + \frac{|a|^2-1}{4|a|^2}\,\omega^{2\bar2},
\end{equation}
that is, the coefficients $(\rho,\lambda,D)$ satisfy $\rho=\lambda=1$ and $D=\frac{|a|^2-1}{4|a|^2}<0$.

Now, let us compute $\mathbf{f}_2$ for any $J_a$ with $a\not=0$. Since
$$
\mathbf{f}_2(M,J_a)=2\, h_{2,0}^{\mathrm{BC}}(M,J_a) + h_{1,1}^{\mathrm{BC}}(M,J_a) + 2\, h_{3,1}^{\mathrm{BC}}(M,J_a) + h_{2,2}^{\mathrm{BC}}(M,J_a) - 2 b_2(M),
$$
from the table for $\frh_4$ given in the Appendix, we have $h_{2,0}^{\mathrm{BC}}(M,J_a)=1$, $h_{3,1}^{\mathrm{BC}}(M,J_a)=2$ and $h_{2,2}^{\mathrm{BC}}(M,J_a)=6$. Moreover, $h_{1,1}^{\mathrm{BC}}(M,J_a)=4$ if and only if $\frac{|a|^2-1}{4|a|^2}\not=-2$, that is, if and only if $|a|\not=\frac13$. Therefore, for $0<|a|<\frac13$ we conclude that $h_{1,1}^{\mathrm{BC}}(M,J_a)=4$ and therefore $\mathbf{f}_2(M,J_a)=0$.
\end{proof}

\begin{remark}\label{dim-changing}
{\rm
Notice that the holomorphic deformation is defined for $|a|<1$ and the dimensions of the Bott-Chern cohomology groups vary
with $a$ as follows:

\smallskip

$h_{1,1}^{\mathrm{BC}}(M,J_a)=4$ for $a$ such that $|a|\not=\frac13$ and $h_{1,1}^{\mathrm{BC}}(M,J_a)=5$
if $|a|=\frac13$;

\smallskip

$h_{3,1}^{\mathrm{BC}}(M,J_0)=3$ and $h_{3,1}^{\mathrm{BC}}(M,J_a)=2$
for any $0<|a|<1$.
}
\end{remark}

The complex manifold $(M,J_0)$ does not admit any sG metric \cite{COUV}, therefore
as a consequence of Proposition~\ref{no-closed} we get:

\begin{corollary}\label{no-Popovici-1}
Let $(M,J_0)$ be a compact nilmanifold with underlying Lie algebra~$\frh_4$ endowed with abelian complex structure $J_0$.
Then, there is a holomorphic family of compact complex manifolds $(M,J_a)_{a\in \Delta}$,
where $\Delta=\{ a\in \mathbb{C}\mid |a|<\frac13 \}$, such that $(M,J_a)$ satisfies property $\mathcal{F}_2$ for each $a\in \Delta\backslash \{0\}$,
but $(M,J_0)$ does not admit any sG metric.
\end{corollary}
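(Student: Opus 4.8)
The plan is to deduce the corollary as a direct combination of Proposition~\ref{no-closed} with the non-existence of sG metrics on the central fibre $(M,J_0)$. The first assertion is already supplied by Proposition~\ref{no-closed}: for the same abelian complex structure $J_0$ on $\frh_4$, the same family $(M,J_a)_{a\in\Delta}$ and the same disc $\Delta=\{a\in\mathbb C\mid |a|<\frac13\}$ constructed there, one has $\mathbf{f}_2(M,J_a)=0$, i.e.\ $(M,J_a)$ satisfies $\mathcal{F}_2$, for every $a\in\Delta\setminus\{0\}$. Thus for this half I would simply quote that construction verbatim and carry over the same $\Delta$.

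What remains is to show that $(M,J_0)$ admits no sG metric, allowing non-invariant metrics. First I would reduce to invariant metrics via the symmetrization process for nilmanifolds, so that it suffices to exclude invariant sG metrics~\cite{COUV}. Next, since $J_0$ is abelian, an invariant $J_0$-Hermitian metric is sG if and only if it is balanced, as recorded after Proposition~\ref{2step-inyect}~\cite{COUV}; note that this equivalence is not vacuous precisely because the automatic ``Gauduchon $\Rightarrow$ sG'' mechanism fails here, the map~\eqref{inyect} being non-injective for abelian structures by Corollary~\ref{abelian-no-inyect}. Finally, the abelian complex structure on $\frh_4$ admits no balanced metric: it does not occur among the balanced structures on $\frh_4$ classified in Table~1, all of which have $\rho=1$ and are therefore non-abelian, in accordance with~\cite{COUV}. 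Hence $(M,J_0)$ carries no sG metric, and the corollary follows.

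There is no genuine obstacle in the argument itself, since both ingredients are at hand: the Bott-Chern computations of Section~\ref{calcul} that feed Proposition~\ref{no-closed}, and the metric classification of~\cite{COUV}. The only delicate point is the \emph{any-metric} scope of the sG non-existence, which is exactly what the symmetrization step secures; without it one would only exclude invariant sG metrics, and the intended contrast with Proposition~\ref{Popovici} would be weakened. Indeed, the interest of the statement lies in this contrast: it shows that weakening the hypothesis of Proposition~\ref{Popovici} from the full $\partial\db$-lemma to the single property $\mathcal{F}_2$ does not suffice to force an sG metric on the central fibre, since here $\mathcal{F}_2$ holds on every $M_a$ with $a\neq0$ yet $M_0$ has no sG metric at all.
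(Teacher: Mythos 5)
Your proposal is correct and follows essentially the same route as the paper: the paper obtains the corollary by combining Proposition~\ref{no-closed} with the fact, cited directly from \cite{COUV}, that $(M,J_0)$ admits no sG metric. Your only addition is to unpack that citation (symmetrization to reduce to invariant metrics, the equivalence sG $\Leftrightarrow$ balanced for abelian structures, and the absence of the abelian structure on $\frh_4$ from Table~1), which is a faithful expansion of exactly what the paper delegates to \cite{COUV}.
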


This corollary says that Proposition~\ref{Popovici} does not hold in general if we weaken the $\partial\db$-lemma condition,
that is, having a weaker property $\mathcal{F}_k$ is not sufficient to ensure the existence of a sG metric.

\bigskip

For any compact complex manifold $M$, Schweitzer proved in~\cite[Lemma 3.3]{Schw} that
$$
h_{1,1}^{\rm BC}(M)+2\, h_{0,2}(M) \geq b_2(M),
$$
where $h_{0,2}(M)$ denotes the dimension of the Dolbeault cohomology group $H^{0,2}_{\db}(M)$, and moreover, if $M$
is K\"ahler then the equality holds. More generally, one has

\begin{proposition}\label{relation}
If $M$ is a compact complex manifold then for any $r\geq 1$
$$
h_{1,1}^{\rm BC}(M)+2 \dim E^{0,2}_r(M) \geq b_2(M),
$$
where $E^{0,2}_r(M)$ denotes the $r$-step $(0,2)$-term of the Fr\"olicher spectral sequence.
Moreover, if $M$
satisfies the $\partial\db$-lemma then the above inequalities are all equalities.
\end{proposition}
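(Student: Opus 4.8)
The plan is to deduce the inequality for general $r$ from the single estimate $h_{1,1}^{\rm BC}(M)\ge \dim E_\infty^{1,1}(M)$ together with the convergence of the Fr\"olicher spectral sequence. Indeed, since $b_2(M)=\sum_{p+q=2}\dim E_\infty^{p,q}(M)$ and complex conjugation of forms (which interchanges $\partial$ and $\db$) gives $E_\infty^{2,0}(M)\cong\overline{E_\infty^{0,2}(M)}$, one has $b_2(M)=2\dim E_\infty^{0,2}(M)+\dim E_\infty^{1,1}(M)$. Because the dimensions $\dim E_r^{0,2}(M)$ are non-increasing in $r$ (each page is a subquotient of the previous one), $\dim E_r^{0,2}(M)\ge \dim E_\infty^{0,2}(M)$ for every $r\ge 1$; hence it suffices to prove $h_{1,1}^{\rm BC}(M)\ge \dim E_\infty^{1,1}(M)$, and the case $r=1$ will recover Schweitzer's inequality since $E_1^{0,2}\cong H^{0,2}_{\db}$.

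To bound $h_{1,1}^{\rm BC}(M)$ from below I would use the natural map $\phi\colon H^{1,1}_{\rm BC}(M)\to H^2_{\rm dR}(M,\C)$ sending $[\alpha]_{\rm BC}\mapsto[\alpha]_{\rm dR}$, which is well defined because a Bott-Chern representative is $d$-closed and $\partial\db f$ is $d$-exact. Writing $F^\bullet$ for the Hodge filtration on $H^2_{\rm dR}(M,\C)$ and $\bar F^\bullet$ for its conjugate, the image of $\phi$ clearly lies in $F^1\cap\bar F^1$, since a $(1,1)$-form has neither a $(2,0)$- nor a $(0,2)$-component. The key step is the reverse inclusion: every class $c\in F^1\cap\bar F^1$ admits a $d$-closed representative of pure type $(1,1)$. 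Given $d$-closed representatives $\alpha$ (with $\alpha^{0,2}=0$) coming from $F^1$ and $\beta$ (with $\beta^{2,0}=0$) coming from $\bar F^1$, I would write $\alpha-\beta=d\eta$ with $\eta=\eta^{1,0}+\eta^{0,1}$, read off $\alpha^{2,0}=\partial\eta^{1,0}$ from the $(2,0)$-component, and check that $\gamma:=\alpha-d\eta^{1,0}$ is a $d$-closed $(1,1)$-form representing $c$. This gives $\mathrm{im}\,\phi=F^1\cap\bar F^1$, and in particular $h_{1,1}^{\rm BC}(M)\ge\dim(F^1\cap\bar F^1)$.

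It then remains a linear-algebra count. Since $F^1,\bar F^1\subseteq H^2_{\rm dR}(M,\C)$ and $\dim F^1=\dim\bar F^1=\dim E_\infty^{2,0}(M)+\dim E_\infty^{1,1}(M)=b_2(M)-\dim E_\infty^{0,2}(M)$, one obtains $\dim(F^1\cap\bar F^1)=2\dim F^1-\dim(F^1+\bar F^1)\ge 2\dim F^1-b_2(M)=\dim E_\infty^{1,1}(M)$. Chaining the two estimates yields $h_{1,1}^{\rm BC}(M)\ge\dim E_\infty^{1,1}(M)$, which closes the argument. I expect the only delicate point to be the reverse inclusion $F^1\cap\bar F^1\subseteq\mathrm{im}\,\phi$, i.e. producing a genuinely pure $(1,1)$ closed representative from two representatives that are only partially of the right type; everything else is bookkeeping with the filtration, the conjugation symmetry, and dimension counts.

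For the equality statement I would use that under the $\partial\db$-lemma all the natural maps are isomorphisms and the Fr\"olicher spectral sequence degenerates at $E_1$, so that $\dim E_r^{0,2}(M)=h_{0,2}(M)$ for all $r$, the Hodge decomposition $H^2_{\rm dR}(M,\C)\cong H^{2,0}_{\db}\oplus H^{1,1}_{\db}\oplus H^{0,2}_{\db}$ holds with $h_{2,0}(M)=h_{0,2}(M)$, and $\phi$ becomes injective with image exactly the $(1,1)$-summand. Every inequality used above then becomes an equality, and $h_{1,1}^{\rm BC}(M)+2\dim E_r^{0,2}(M)=h_{1,1}(M)+2\,h_{0,2}(M)=b_2(M)$.
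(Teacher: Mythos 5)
Your core construction is sound, but the write-up rests on one assertion that is simply false: complex conjugation does \emph{not} give $E_\infty^{2,0}(M)\cong\overline{E_\infty^{0,2}(M)}$. Since conjugation interchanges $\partial$ and $\db$, it maps the Fr\"olicher spectral sequence (filtration by holomorphic degree) onto the \emph{conjugate} spectral sequence (filtration by antiholomorphic degree), not onto itself, and the symmetry $\dim E_\infty^{p,q}=\dim E_\infty^{q,p}$ fails in general: for the Hopf surface $S$ one has $\dim E_\infty^{1,0}=h_{1,0}=0$ while $\dim E_\infty^{0,1}=h_{0,1}=1$ (the spectral sequence of any compact surface degenerates at $E_1$), and for $S\times E$, with $E$ an elliptic curve, K\"unneth gives $\dim E_\infty^{2,0}=0\neq 1=\dim E_\infty^{0,2}$, i.e.\ a failure in the very degree at issue. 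Consequently your identity $b_2=2\dim E_\infty^{0,2}+\dim E_\infty^{1,1}$, the reduction ``it suffices to prove $h_{1,1}^{\rm BC}\geq\dim E_\infty^{1,1}$'', and the final labeling $2\dim F^1-b_2=\dim E_\infty^{1,1}$ are all unjustified (indeed, when $\dim E_\infty^{2,0}>\dim E_\infty^{0,2}$ the inequality $h_{1,1}^{\rm BC}\geq\dim E_\infty^{1,1}$ would be strictly weaker than what is needed). Fortunately the error is inessential and cancels out of your own chain: the only identity you actually need is the correct one you also state, $\dim F^1H^2_{\rm dR}=\dim E_\infty^{2,0}+\dim E_\infty^{1,1}=b_2-\dim E_\infty^{0,2}$, coming from convergence of the spectral sequence. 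With it,
$$
h_{1,1}^{\rm BC}(M)\;\geq\;\dim\bigl(F^1\cap\bar F^1\bigr)\;\geq\;2\dim F^1-b_2(M)\;=\;b_2(M)-2\dim E_\infty^{0,2}(M)\;\geq\; b_2(M)-2\dim E_r^{0,2}(M),
$$
using semicontinuity of page dimensions; so the repair is simply to delete every mention of $E_\infty^{1,1}$ and of the conjugation symmetry. Your two substantive steps, the identification ${\rm im}\,\phi=F^1\cap\bar F^1$ (producing a pure closed $(1,1)$ representative by subtracting $d\eta^{1,0}$) and the equality case under the $\partial\db$-lemma, are correct.

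Once repaired, your route is genuinely different from the paper's. The paper fixes $r$ and builds an exact sequence $0\to Z^{1,1}\to H^{1,1}_{\rm BC}(M)\stackrel{u}{\to}H^2_{\rm dR}(M,\C)\stackrel{v_r}{\to}\overline{E_r^{0,2}(M)}\oplus E_r^{0,2}(M)\to{\rm coker}(v_r)\to 0$, where the well-definedness of $v_r$ uses the explicit description of the terms $E_r^{0,2}$ from \cite{CFGU0}, and then takes the alternating sum of dimensions; its proof of exactness at $H^2_{\rm dR}$ is essentially the same computation as your key step. Your version instead works only at $E_\infty$, where the filtration description makes everything transparent and no explicit description of the pages is needed, and then descends to arbitrary $r$ by monotonicity of $\dim E_r^{0,2}$; the paper's version never mentions $E_\infty$ and yields the inequality page by page. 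Both are legitimate; the paper's is more self-contained at finite $r$, yours isolates a cleaner structural fact (the image of $H^{1,1}_{\rm BC}(M)\to H^2_{\rm dR}(M,\C)$ is exactly $F^1\cap\bar F^1$), which is of independent interest.
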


\begin{proof}
For $r=1$ the term $E^{0,2}_1(M)$ is precisely the Dolbeault cohomology group $H^{0,2}_{\db}(M)$. We recall~\cite{CFGU0} that
$$
E_2^{0,2}(M)=\frac{\{\alpha_{0,2}\in \Omega^{0,2}(M)\,|\,\db \alpha_{0,2}=0,\, \partial\alpha_{0,2}=\db\alpha_{1,1}\}}{\db(\Omega^{0,1}(M))},
$$
$$
E_3^{0,2}(M)=\frac{\{\alpha_{0,2}\in \Omega^{0,2}(M)\,|\,\db \alpha_{0,2}=0,\, \partial\alpha_{0,2}=\db\alpha_{1,1},\, \partial\alpha_{1,1}=\db\alpha_{2,0}\}}{\db(\Omega^{0,1}(M))},
$$
and
$$
E_r^{0,2}(M)=\frac{\{\alpha_{0,2}\in \Omega^{0,2}(M)\,|\,\db \alpha_{0,2}=0,\, \partial\alpha_{0,2}=\db\alpha_{1,1},\, \partial\alpha_{1,1}=\db\alpha_{2,0},\, \partial\alpha_{2,0}=0\}}{\db(\Omega^{0,1}(M))},
$$
for any $r\geq 3$.
Let us consider the sequence
$$
0\rightarrow Z^{1,1}(M)
\hookrightarrow H^{1,1}_{\mathrm{BC}}(M) \stackrel{u}{\rightarrow} H^{2}_{\mathrm{dR}}(M,\mathbb{C}) \stackrel{v_r}{\rightarrow}
\overline{E_r^{0,2}(M)}\oplus E_r^{0,2}(M)
\rightarrow {\rm coker}(v_r) \rightarrow 0,
$$
where
$Z^{1,1}(M)=\frac{\ker\{d\colon \Omega^{1,1}(M)\longrightarrow \Omega^{3}(M) \}\, \cap\, d(\Omega^1)}{ \partial\db(\Omega^0(M)) }$,
$u$ is the natural map and $v_r$ is given by
$$
v_r([\alpha=\alpha_{2,0}+\alpha_{1,1}+\alpha_{0,2}]_{\mathrm{dR}})=([\alpha_{2,0}],[\alpha_{0,2}])\in \overline{E_r^{0,2}(M)}\oplus E_r^{0,2}(M).
$$
The above sequence is exact because $\ker v_r \subset {\rm im}\, u$. In fact, if $([\alpha_{2,0}],[\alpha_{0,2}])=(0,0)$ in $\overline{E_r^{0,2}(M)}\oplus E_r^{0,2}(M)$ then $\alpha_{2,0}=\partial \beta_{1,0}$ and $\alpha_{0,2}=\db \beta_{0,1}$ for some (0,1)-forms $\overline{\beta_{1,0}}$ and $\beta_{0,1}$. Therefore, the (1,1)-form $\gamma=\alpha_{1,1}-\db \beta_{1,0}-\partial\beta_{0,1}$ is closed and
$$
\alpha-\gamma=
d(\beta_{1,0}+\beta_{0,1}),
$$
that is, $\gamma$ defines a class in $H^{1,1}_{\mathrm{BC}}(M)$ such that $u([\gamma]_{\mathrm{BC}})=[\alpha]_{\mathrm{dR}}$.

Now, the exactness of the sequence implies
$$
\begin{array}{rcl}
0&\!\!=&\!\!\dim Z^{1,1}(M)-h_{1,1}^{\mathrm{BC}}(M)+b_2(M)-2 \dim E_r^{0,2}(M) + \dim {\rm coker}(v_r)\\[6pt]
&\!\!\geq&\!\! b_2(M)-h_{1,1}^{\mathrm{BC}}(M)-2 \dim E_r^{0,2}(M).
\end{array}
$$

Under the $\partial\db$-lemma the natural map $H^{p,q}_{\mathrm{BC}}(M) \longrightarrow H^{p,q}_{\db}(M)$ is an isomorphism, so $h_{2,0}(M)=h_{2,0}^{\mathrm{BC}}(M)=h_{0,2}^{\mathrm{BC}}(M)=h_{0,2}(M)$ and $h_{1,1}(M)=h_{1,1}^{\mathrm{BC}}(M)$.
Moreover, by~\cite{DGMS} the Fr\"olicher spectral sequence degenerates at the first step,
therefore for any $r$ one has that $\dim E^{0,2}_r(M)=h_{0,2}(M)$ and
$$
b_2(M)=h_{2,0}(M) + h_{1,1}(M) + h_{0,2}(M)= h_{1,1}^{\mathrm{BC}}(M) + 2\, \dim E^{0,2}_r(M).
$$
\end{proof}

Let $M$ be a compact complex manifold. From now on, we denote by $\mathbf{k}_r(M)$ the non-negative integer given by
$$
\mathbf{k}_r(M)=h_{1,1}^{\rm BC}(M)+2\dim E^{0,2}_r(M) - b_2(M).
$$
Therefore, $\mathbf{k}_r(M)$ are complex invariants which vanish if the manifold $M$ satisfies the $\partial\db$-lemma.
Notice that $\mathbf{k}_1(M)\geq \mathbf{k}_2(M)\geq \mathbf{k}_3(M)=\mathbf{k}_r(M)\geq 0$ for any $r\geq 4$.

\begin{remark}\label{ternas-h15}
{\rm
In general $\mathbf{k}_1(M)$, $\mathbf{k}_2(M)$ and $\mathbf{k}_3(M)$ do not coincide.
For example, let $M$ be a nilmanifold
with underlying Lie algebra $\frh_{15}$, for which $b_2(M)=5$. From the cases for $\rho=1$ on $\frh_{15}$ given in the Appendix and from \cite[Proposition 6.2]{COUV} we get:
\begin{enumerate}
\item[$\bullet$] if $J$ is a complex structure defined by $\rho=B=1$ and $c>2$, then $h_{1,1}^{BC}(M,J)=5$ and
$\dim E_1^{0,2}(M,J)=\dim E_2^{0,2}(M,J)=2>1=\dim E_3^{0,2}(M,J)$, so
$$
(\mathbf{k}_1(M,J), \mathbf{k}_2(M,J),\mathbf{k}_3(M,J))=(4,4,2);
$$
\item[$\bullet$] if $J'$ is a complex structure defined by $\rho=1\not=|B|$ and $c=0$, then $h_{1,1}^{BC}(M,J')=4$ and
$\dim E_1^{0,2}(M,J')=2>1=\dim E_2^{0,2}(M,J')=\dim E_3^{0,2}(M,J')$, therefore
$$
(\mathbf{k}_1(M,J'), \mathbf{k}_2(M,J'),\mathbf{k}_3(M,J'))=(3,1,1).
$$
\end{enumerate}
}
\end{remark}

There exist compact complex manifolds $M$ having no K\"ahler metric but with $\mathbf{k}_r(M)=0$, the Iwasawa manifold
being an example. More generally:

\begin{proposition}\label{complex-par}
Let $M$ be a compact complex-parallelizable nilmanifold. Then, $\mathbf{k}_r(M)=0$ for all $r$.
\end{proposition}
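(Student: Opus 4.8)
The plan is to prove the stronger statement $\mathbf{k}_1(M)=0$; since the remark preceding the proposition gives $\mathbf{k}_1(M)\geq \mathbf{k}_r(M)\geq 0$ for every $r$, this immediately forces $\mathbf{k}_r(M)=0$ for all $r$. Because $J$ is complex-parallelizable, the inclusion \eqref{inclusion} induces the isomorphism \eqref{iso-Dolbeault}, so by Theorem~\ref{Angella} and by Nomizu's theorem every cohomology entering $\mathbf{k}_1$ may be computed at the level of the Lie algebra $\frg$. Writing $\frn=\frg^{1,0}$, the condition $d(\frg^{1,0})\subset\bigwedge^{2,0}(\frg^*)$ gives $\db\omega^i=0$ and, by conjugation, $\partial\omega^{\bar\jmath}=0$; equivalently $[\frg^{1,0},\frg^{0,1}]=0$, so $\gc=\frn\oplus\bar\frn$ splits as a direct sum of Lie algebras, with $\partial$ (resp.\ $\db$) acting as the Chevalley--Eilenberg differential of $\frn$ (resp.\ $\bar\frn$).

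First I would compute $b_2$ and $\dim E_1^{0,2}$. Set $a_i=\dim_\C H^i(\frn)$; conjugation gives $\dim_\C H^i(\bar\frn)=a_i$, and $a_0=1$. The Künneth formula applied to $\gc=\frn\oplus\bar\frn$, together with $H^2_{\mathrm{dR}}(M,\C)\cong H^2(\gc)$, yields
$$
b_2(M)=\sum_{i+j=2}a_i\,a_j=2a_2+a_1^2 .
$$
Since $\db$ vanishes on $\bigwedge^{\bullet,0}(\frg^*)$, the complex $(\bigwedge^{0,\bullet}(\frg^*),\db)$ is exactly the Chevalley--Eilenberg complex of $\bar\frn$, whence $\dim E_1^{0,2}(M)=h_{0,2}(M)=\dim H^2(\bar\frn)=a_2$.

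The decisive computation is that of $h_{1,1}^{\mathrm{BC}}(M)$. At the invariant level the denominator is $\partial\db(\bigwedge^{0,0}(\frg^*))=0$, since scalars are $\db$-closed, so $H^{1,1}_{\mathrm{BC}}(\frg)$ is simply the space of $d$-closed $(1,1)$-forms. For $\alpha\in\bigwedge^{1,1}(\frg^*)$ one has $\partial\alpha\in\bigwedge^{2,1}(\frg^*)$ and $\db\alpha\in\bigwedge^{1,2}(\frg^*)$, hence $d\alpha=0$ if and only if $\partial\alpha=0$ and $\db\alpha=0$. Writing $\alpha=\sum_{i,j}\alpha_{i\bar\jmath}\,\omega^i\wedge\omega^{\bar\jmath}$ and using $\partial\omega^{\bar\jmath}=0=\db\omega^i$, the condition $\partial\alpha=0$ says that each ``column'' $\sum_i\alpha_{i\bar\jmath}\,\omega^i$ is closed in $\frn^*$, while $\db\alpha=0$ says each ``row'' $\sum_j\alpha_{i\bar\jmath}\,\omega^{\bar\jmath}$ is closed in $\bar\frn^*$. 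Thus the space of closed $(1,1)$-forms is $Z^1(\frn)\otimes Z^1(\bar\frn)$, and since $H^1(\frn)=Z^1(\frn)$ has dimension $a_1$ we obtain $h_{1,1}^{\mathrm{BC}}(M)=a_1^2$. Combining the three computations,
$$
\mathbf{k}_1(M)=h_{1,1}^{\mathrm{BC}}(M)+2\dim E_1^{0,2}(M)-b_2(M)=a_1^2+2a_2-(2a_2+a_1^2)=0 .
$$
I expect the main obstacle to lie precisely in this last step: decoupling $d$-closedness into $\partial$- and $\db$-closedness and then identifying the closed $(1,1)$-forms with the tensor product $Z^1(\frn)\otimes Z^1(\bar\frn)$ (the inclusion of $\{\partial\alpha=0\}\cap\{\db\alpha=0\}$ into this product being the substantive linear-algebra point), together with the observation that the $\partial\db$-image vanishes at the invariant level.
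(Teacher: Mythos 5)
Your proof is correct, and it takes a genuinely different route from the paper's. The common ground is the set-up: both arguments reduce everything to the Lie-algebra level (complex-parallelizable structures satisfy the Dolbeault hypothesis of Theorem~\ref{Angella}, and Betti numbers come from Nomizu), both use $\db(\frg^{1,0})=0$, both observe that $H^{1,1}_{\rm BC}(\frg)$ is exactly the space of $d$-closed invariant $(1,1)$-forms because $\partial\db$ annihilates constants, and both reduce to $r=1$ via $\mathbf{k}_1\geq\mathbf{k}_r\geq0$. The mechanisms then diverge. The paper computes no dimensions at all: it specializes the exact sequence constructed in the proof of Proposition~\ref{relation} and shows that in this case it collapses to a short exact sequence $0\to H^{1,1}_{\rm BC}(\frg)\stackrel{u}{\longrightarrow} H^2_{\rm dR}(\frg;\mathbb{C})\stackrel{v_1}{\longrightarrow}\overline{H^{0,2}_{\db}(\frg)}\oplus H^{0,2}_{\db}(\frg)\to 0$ (i.e.\ $u$ is injective and $v_1$ surjective), and $\mathbf{k}_1=0$ is then just the Euler-characteristic identity of that sequence. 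You instead exploit the splitting $\gc=\frn\oplus\bar\frn$ (with $\frn=\frg^{1,0}$) as a direct sum of Lie algebras and compute all three terms, with $a_i=\dim H^i(\frn)$: $b_2=2a_2+a_1^2$ by K\"unneth, $h_{0,2}=a_2$, and $h_{1,1}^{\rm BC}=a_1^2$ via the identification of the $d$-closed $(1,1)$-forms with $Z^1(\frn)\otimes Z^1(\bar\frn)$; the linear-algebra step you flag, namely $(A\otimes W)\cap(V\otimes B)=A\otimes B$ for subspaces $A\subseteq V$ and $B\subseteq W$, is a true and standard fact, so there is no gap there. What each approach buys: the paper's argument is shorter because it recycles the machinery of Proposition~\ref{relation} and never needs K\"unneth or the product structure of $\gc$; yours is more explicit and yields the individual dimensions themselves, and in particular it re-proves, by dimension count, the injectivity and surjectivity that the paper asserts for its sequence, at the cost of invoking the structure theory of complex-parallelizable algebras.
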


\begin{proof}
It suffices to prove the result for $r=1$. Let $\frg$ be the Lie algebra underlying $M$.
Since all the cohomology groups involved can be computed at the Lie-algebra level and since $\db(\frg^{1,0})=0$
because the complex
structure is parallelizable, we have that the sequence
$$
0 \rightarrow H^{1,1}_{\rm BC}(\frg) = \ker \left\{ d\colon\!\! \bigwedge\!\! ^{1,1}(\frg^*)\longrightarrow \bigwedge\!\! ^{3}(\frg^*) \right\}
\stackrel{u}{\hookrightarrow} H^2_{dR}(\frg;\mathbb{C})
\stackrel{v_1}{\longrightarrow} \overline{H^{0,2}_{\db}(\frg)} \oplus H^{0,2}_{\db}(\frg)
\rightarrow 0
$$
is a short exact sequence.
Therefore, $h_{1,1}^{\rm BC}(M) - b_2(M) +2\, h_{0,2}(M) =0$.
\end{proof}

Since the vanishing of $\mathbf{k}_1(M)$ implies the vanishing of any other $\mathbf{k}_r(M)$, next we consider the property
$$
\mathcal{K}=\{ \mbox{the compact complex manifold satisfies } \mathbf{k}_1=0 \}.
$$
It is clear that any compact complex manifold satisfying the $\partial\db$-lemma has the property $\mathcal{K}$.

By the same argument as for $\mathcal{F}_k$, the property $\mathcal{K}$ is open under holomorphic deformations.
As a consequence of Proposition~\ref{complex-par} we have:

\begin{corollary}\label{complex-par-cor}
Let $M$ be a compact complex-parallelizable nilmanifold and $M_a$ a small deformation of $M$.
Then, $M_a$ has the property $\mathcal{K}$, $h_{1,1}^{\rm BC}(M_a)=h_{1,1}^{\rm BC}(M)$ and
$h_{0,2}(M_a)=h_{0,2}(M)$ for sufficiently small~$a$.
\end{corollary}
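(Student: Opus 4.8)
The plan is to combine Proposition~\ref{complex-par}, which guarantees $\mathbf{k}_1(M)=0$ for the complex-parallelizable nilmanifold $M$, with the semicontinuity properties of the relevant cohomology dimensions, exactly in the spirit of the proof of Proposition~\ref{Fk-cor} for the property $\mathcal{F}_k$. First I would recall that the second Betti number is a topological invariant, so $b_2(M_a)=b_2(M)$ for every member of the holomorphic family. The two remaining ingredients of $\mathbf{k}_1$ are the Bott-Chern number $h_{1,1}^{\rm BC}$ and the Dolbeault number $h_{0,2}=\dim E_1^{0,2}$, both of which are upper-semicontinuous functions of the deformation parameter: the former by Schweitzer's Hodge theory \cite{Schw}, the latter by the classical Kodaira--Spencer upper-semicontinuity of Dolbeault cohomology dimensions.

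Next I would assemble these facts into the inequality that forces the conclusion. For $a$ sufficiently small, upper-semicontinuity gives $h_{1,1}^{\rm BC}(M_a)\leq h_{1,1}^{\rm BC}(M)$ and $h_{0,2}(M_a)\leq h_{0,2}(M)$. Combined with $b_2(M_a)=b_2(M)$ and the vanishing $\mathbf{k}_1(M)=0$, this yields
\[
\mathbf{k}_1(M_a)=h_{1,1}^{\rm BC}(M_a)+2\,h_{0,2}(M_a)-b_2(M_a)\leq h_{1,1}^{\rm BC}(M)+2\,h_{0,2}(M)-b_2(M)=\mathbf{k}_1(M)=0.
\]
On the other hand, Proposition~\ref{relation} guarantees $\mathbf{k}_1(M_a)\geq 0$ for any compact complex manifold, so in fact $\mathbf{k}_1(M_a)=0$; that is, $M_a$ has the property $\mathcal{K}$.

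Finally I would extract the two dimension equalities from the fact that the minimal value $0$ is attained. Subtracting the identity $\mathbf{k}_1(M)=0$ from $\mathbf{k}_1(M_a)=0$ gives
\[
\left(h_{1,1}^{\rm BC}(M_a)-h_{1,1}^{\rm BC}(M)\right)+2\left(h_{0,2}(M_a)-h_{0,2}(M)\right)=0,
\]
in which both parenthesized terms are nonpositive by upper-semicontinuity. Since a sum of nonpositive quantities with positive coefficients vanishes only when each quantity vanishes, we conclude $h_{1,1}^{\rm BC}(M_a)=h_{1,1}^{\rm BC}(M)$ and $h_{0,2}(M_a)=h_{0,2}(M)$ for sufficiently small $a$. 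There is no genuine obstacle here: the whole content is the interplay between the sharp lower bound $\mathbf{k}_1\geq 0$ coming from Proposition~\ref{relation} and the upper-semicontinuity of its two non-topological constituents. The only mild point to be careful about is that for $h_{0,2}$ one invokes semicontinuity of a Dolbeault number rather than of a Bott-Chern number, which is precisely the standard semicontinuity theorem and causes no difficulty.
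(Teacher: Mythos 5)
Your proposal is correct and is essentially the paper's own argument: the paper derives the corollary by combining Proposition~\ref{complex-par} with the openness of $\mathcal{K}$, which is established "by the same argument as for $\mathcal{F}_k$," i.e.\ exactly your interplay between the lower bound $\mathbf{k}_1\geq 0$ of Proposition~\ref{relation}, the topological invariance of $b_2$, and the upper semicontinuity of $h_{1,1}^{\rm BC}$ (Schweitzer) and of the Dolbeault number $h_{0,2}$ (Kodaira--Spencer). Your explicit equality-case analysis forcing $h_{1,1}^{\rm BC}(M_a)=h_{1,1}^{\rm BC}(M)$ and $h_{0,2}(M_a)=h_{0,2}(M)$ is precisely what the paper leaves implicit.
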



\begin{remark}
{\rm
In particular $h_{1,1}^{\rm BC}$ is stable under small deformations of the Iwasawa manifold.
Notice that for such deformations, Angella \cite{Ang} proved that $h_{1,1}^{\rm BC}=4$ and $h_{0,2}=2$.
}
\end{remark}

\begin{proposition}\label{K-no-closed}
The property $\mathcal{K}$ is not closed under holomorphic deformation.
\end{proposition}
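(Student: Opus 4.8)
The plan is to reuse verbatim the holomorphic family $(M,J_a)_{a\in\Delta}$ of Proposition~\ref{no-closed}, where $M$ is the nilmanifold with underlying Lie algebra $\frh_4$, $J_0$ is its abelian complex structure, and for $0<|a|<\frac13$ the structure $J_a$ is nilpotent but not abelian and lies in Family I with normalized coefficients $\rho=\lambda=1$ and $D=\frac{|a|^2-1}{4|a|^2}$. I would show that the property $\mathcal K$ (i.e. $\mathbf k_1=0$) holds for every $a\in\Delta\setminus\{0\}$ but fails at $a=0$, which is exactly what non-closedness demands.

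To evaluate $\mathbf k_1(M,J_a)=h_{1,1}^{\rm BC}(M,J_a)+2\dim E^{0,2}_1(M,J_a)-b_2(M)$ I would use that $E^{0,2}_1=H^{0,2}_{\db}$, that $b_2(M)=8$, and that all the cohomologies involved may be computed at the Lie-algebra level: the Bott-Chern groups by Theorem~\ref{Angella} together with Rollenske's isomorphism \eqref{iso-Dolbeault} (valid since $\frg\cong\frh_4\not\cong\frh_7$), and $H^{0,2}_{\db}$ by the corresponding Nomizu-type theorem. The Bott-Chern input is already recorded: by Remark~\ref{dim-changing} one has $h_{1,1}^{\rm BC}(M,J_a)=4$ for every $a$ with $|a|\neq\frac13$, in particular for $a=0$ and for all $0<|a|<\frac13$.

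Thus the entire jump must be carried by the Dolbeault term, and the key computation is the $(0,2)$ Dolbeault cohomology. Since the $(0,2)$-component of $d\omega^{\bar3}$ is exactly $\rho\,\omega^{\bar1\bar2}$, the map $\db\colon\bigwedge^{0,1}(\frg^*)\to\bigwedge^{0,2}(\frg^*)$ has rank $\rho$, while every $(0,2)$-form is $\db$-closed; hence $h_{0,2}=3-\rho$. For $0<|a|<\frac13$ one has $\rho=1$, so $\db\omega^{\bar3}=\omega^{\bar1\bar2}\neq0$ and $h_{0,2}(M,J_a)=2$, giving $\mathbf k_1(M,J_a)=4+2\cdot2-8=0$. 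For the abelian limit $J_0$ one has $\rho=0$, so $\db$ vanishes on all $(0,1)$-forms and $h_{0,2}(M,J_0)=3$, giving $\mathbf k_1(M,J_0)=4+2\cdot3-8=2\neq0$. Consequently $\mathcal K$ holds off the origin but fails at the origin, proving that $\mathcal K$ is not closed; moreover, by Proposition~\ref{relation} the vanishing off $0$ could equivalently be read through the exact sequence used there.

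The main obstacle is bookkeeping rather than conceptual. I must be certain that $h_{1,1}^{\rm BC}$ genuinely stays equal to $4$ across the whole family, including the degenerate limit $a\to0$ where the coefficient $D=\frac{|a|^2-1}{4|a|^2}$ diverges and the normal form \eqref{ecus-normal-Ja} breaks down; here one relies on computing the Bott-Chern dimension directly from the $\eta$-structure equations at $a=0$ (equivalently, from Proposition~\ref{no-closed} and Remark~\ref{dim-changing}). Once this is secured, the whole discontinuity of $\mathbf k_1$ is isolated in the Dolbeault number $h_{0,2}$, which drops from $3$ to $2$ precisely because the deformation turns an abelian structure ($\rho=0$) into a non-abelian one ($\rho=1$).
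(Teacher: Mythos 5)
Your proposal is correct and follows essentially the same route as the paper: it reuses the $\frh_4$ deformation of Proposition~\ref{no-closed}, observes that $h_{1,1}^{\rm BC}(M,J_a)=4$ throughout $|a|<\frac13$, and isolates the jump of $\mathbf{k}_1$ in the Dolbeault number $h_{0,2}$, which drops from $3$ at $a=0$ to $2$ for $a\neq0$. The only (harmless) difference is that you compute $h_{0,2}=3-\rho$ directly from the invariant structure equations via the Nomizu-type isomorphism, whereas the paper simply cites the Dolbeault computation in \cite[Proposition 6.1]{COUV}.
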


\begin{proof}
The proof is similar to that of Proposition~\ref{no-closed}. For the holomorphic deformation given there, counting the dimension
of the Dolbeault cohomology group obtained in \cite[Proposition 6.1]{COUV},
we get that $h_{0,2}(M,J_0)=3$ and $h_{0,2}(M,J_a)=2$ for any $a\not=0$.
Since $h_{1,1}^{\mathrm{BC}}(M,J_a)=4$ for any $a$ such that $|a|<\frac13$ we conclude that
$\mathbf{k}_1(M,J_a)=0$ for $0<|a|<\frac13$, but $\mathbf{k}_1(M,J_0)=2$.
\end{proof}

The deformation of the abelian complex structure $J_0$ on $\frh_4$ allowed us to show that in general the properties $\mathcal{F}_2$ and $\mathcal{K}$
are not closed under holomorphic deformation.
Moreover, combining Propositions~\ref{no-closed} and~\ref{K-no-closed} with the results on balanced metrics and Fr\"olicher sequence in \cite{COUV}
we have

\begin{theorem}\label{no-Popovici-2}
Let $(M,J_0)$ be a compact nilmanifold with underlying Lie algebra~$\frh_4$ endowed with abelian complex structure $J_0$.
Then, there is a holomorphic family of compact complex manifolds $(M,J_a)_{a\in \Delta}$,
where $\Delta=\{ a\in \mathbb{C}\mid |a|<\frac13 \}$, such that $(M,J_a)$ satisfies properties $\mathcal{F}_2$ and $\mathcal{K}$,
admits balanced metric and has degenerate Fr\"olicher sequence for each $a\in \Delta\backslash \{0\}$,
but $(M,J_0)$ does not admit sG metrics, the properties $\mathcal{F}_2$ and $\mathcal{K}$ fail
and its Fr\"olicher's spectral sequence does not degenerate at the first step.
\end{theorem}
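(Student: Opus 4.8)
The plan is to assemble the statement from results already in hand, since the required holomorphic family is precisely the one built in the proof of Proposition~\ref{no-closed}: for $a\in\C$ with $|a|<\frac13$ one takes $\mu^1=\eta^1+a\,\eta^{\bar1}-i a\,\eta^{\bar2}$, $\mu^2=\eta^2$, $\mu^3=\eta^3$, obtaining an invariant complex structure $J_a$ on $M$. It then remains only to verify each asserted property, treating the punctured disc $a\in\Delta\setminus\{0\}$ and the central fibre $a=0$ separately.

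I would first dispose of the central fibre. The failure of $\mathcal{F}_2$ at $a=0$ is exactly the content of Proposition~\ref{no-closed} (where $\mathbf{f}_2(M,J_0)=2$), and the failure of $\mathcal{K}$ is Proposition~\ref{K-no-closed} (where $\mathbf{k}_1(M,J_0)=2$). That $(M,J_0)$ carries no sG metric is Corollary~\ref{no-Popovici-1}, which rests on \cite{COUV}. For the non-degeneration of the Fr\"olicher spectral sequence at $a=0$ I would use that $h_{0,2}(M,J_0)=3$, computed in the proof of Proposition~\ref{K-no-closed} via \cite{COUV}: combining the general inequality $\sum_{p+q=2}h_{p,q}\geq b_2$ with the upper semicontinuity of the Hodge numbers, the jump $h_{0,2}(M,J_0)=3>2=h_{0,2}(M,J_a)$ forces $\sum_{p+q=2}h_{p,q}(M,J_0)>b_2(M)$ once degeneration is known on the punctured disc, and strict inequality in degree $2$ is equivalent to non-degeneration of the first step. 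Alternatively this is read off directly from the Dolbeault computations of \cite{COUV}.

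It remains to treat $a\in\Delta\setminus\{0\}$. Here $\mathcal{F}_2$ and $\mathcal{K}$ hold by the concluding lines of the proofs of Propositions~\ref{no-closed} and~\ref{K-no-closed} respectively, where $\mathbf{f}_2(M,J_a)=0$ and $\mathbf{k}_1(M,J_a)=0$ are shown for $0<|a|<\frac13$. For the balanced condition and the degeneration of the Fr\"olicher sequence I would identify $J_a$ explicitly: by the normalized equations~\eqref{ecus-normal-Ja} the structure $J_a$ lies in Family~I on $\frh_4$ with parameters $\rho=\lambda=1$ and $D=\frac{|a|^2-1}{4|a|^2}$, which is real and strictly negative for $0<|a|<\frac13$. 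This is precisely one of the entries recorded for $\frh_4$ in Table~1, so by the classification of \cite{COUV} such a structure admits an invariant balanced Hermitian metric and has degenerate Fr\"olicher spectral sequence.

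The main obstacle is not any single computation but the bookkeeping needed to see that one deformation simultaneously witnesses all the listed phenomena; in particular the only point genuinely beyond Propositions~\ref{no-closed} and~\ref{K-no-closed} is the behaviour of the Fr\"olicher sequence, where I would lean on the identification of $J_a$ with the tabulated Family~I structure for $a\neq0$, and on the upper semicontinuity argument above (or the direct Dolbeault computation of \cite{COUV}) for the non-degeneration at $a=0$.
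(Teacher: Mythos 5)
Your proposal is correct and follows essentially the same route as the paper: Theorem~\ref{no-Popovici-2} is obtained there precisely by combining Propositions~\ref{no-closed} and~\ref{K-no-closed} (together with Corollary~\ref{no-Popovici-1}) for the same deformation $J_a$, with the existence of balanced metrics and the Fr\"olicher degeneration on the punctured disc read off from the classification results of \cite{COUV} via the identification $\rho=\lambda=1$, $D=\frac{|a|^2-1}{4|a|^2}<-2$ in Family~I on $\frh_4$. Your upper-semicontinuity argument for the non-degeneration at $a=0$ (using the jump $h_{0,2}(M,J_0)=3>2=h_{0,2}(M,J_a)$ and the Fr\"olicher equality on the punctured disc) is a valid, slightly more self-contained alternative to simply quoting the Dolbeault computations of \cite{COUV}.
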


\section{On the moduli space of type IIB supersymmetric solutions on nilmanifolds}\label{supergrav}

Tseng and Yau showed in \cite{TY} that the Bott-Chern cohomology can be used to count a subset of scalar moduli fields in Minkowski type IIB compactifications with Ramond-Ramond fluxes and branes. It is motivated by the link that exists between the Maxwell equations
and the de Rham cohomology and the similarities between the type IIB Minkowski supergravity equations (see e.g.~\cite{GMPT}) and the Maxwell equations.
Notice that $N=1$ Minkowski vacua of type II string theories on 6-dimensional nilmanifolds are found in~\cite{GMPT2}.

Recall that type II supergravity equations arise from imposing $N=1$ supersymmetry on the product $\mathbb{R}^{3,1}\times M^6$ of the Minkowski
spacetime and a compact 6-dimensional manifold with the conformally warped metric
$$
ds^2=e^{2f} ds^2_{\mathbb{R}^{3,1}}+ds^2_{M^6},
$$
where $e^{2f}$ is the conformal factor.
In particular,                                                                                                                                                                                                                                                                                                                                                                                                                                                                                                                                                                                                                                                                                                                                                                                                                                                                                                                                                                                                                                                                                                                                                                                                                                                                                                                                                                                                                                                                                                                                                                                                                                                                                                                                                                                                                                                                                                                                                                                                                                                                                                                                                                                      type IIB supersymmetric solutions with $O5/D5$ brane sources are required to be complex.
Such solutions have an SU(3) structure which is encoded in a nowhere vanishing decomposable (3,0)-form $\Psi$ and
a balanced Hermitian (1,1)-form $F$ on $M$.
The SU(3) data $(\Psi,F)$ satisfy $F\wedge \Psi=0$ and $i\Psi\wedge\overline\Psi=\frac43 e^{2f}F^3$, together with the differential conditions
\begin{equation}\label{type_IIB_eq}
d\Psi=0,\quad d(F^2/2)=0,\quad dd^c(e^{-2f}\,F)=\rho_B,
\end{equation}
where $d^c=i(\bar\partial-\partial)$ (and therefore, $dd^c=2\,i\,\partial\bar\partial$) and $\rho_B$ is the Poincar\'e dual four-current of complex codimension two holomorphic submanifolds which the $O5/D5$-branes wrap around. Non-trivial background solutions of the differential system above have a non-zero 3-form flux, $H$, given by $H=d^c(e^{-2f}\,F)$.

Suppose that the complex manifold is fixed, i.e. let us fix $\Psi$. If additionally the source current $\rho_B$ and the conformal factor $e^{-2f}$ do not vary, Tseng and Yau proved that the moduli space of solutions for $F$, given by the linearized variation $F\to F+\delta F$, is parametrized by the harmonic $(1,1)$-forms of the Aeppli cohomology which are also primitive, i.e. $\delta F\in \mathcal H^{1,1}_{\mathrm{A}}(M)\cap  \mathcal P^2$. (We recall that a $(1,1)$-form $\beta$ is primitive if $F^2\wedge\beta=0$.)
Using the duality between Bott-Chern and Aeppli cohomologies (that extends also to the harmonic forms \cite{Schw}), the linearized deformation can be computed in terms of the Bott-Chern cohomology, that is to say, $\delta(F^2/2)=F\wedge \delta F$ is parametrized by
$$
\delta(F^2/2)\in\mathcal H^{2,2}_{\mathrm{BC}}(M)\cap (F\wedge \mathcal P^2),
$$
where $\mathcal H^{2,2}_{\mathrm{BC}}(M)$ denotes the harmonic forms of the Bott-Chern cohomology group $H^{2,2}_{\mathrm{BC}}(M)$, i.e. closed $(2,2)$-forms $\alpha$ such that $\partial\bar\partial(\ast \alpha)=0$, $*$ being the Hodge star operator with respect to the Hermitian metric.
Notice that the space $F\wedge \mathcal P^2$ coincides with the kernel of $F\wedge\cdot:\Omega^{2,2}(M)\to \Omega^{3,3}(M)$ because $F\wedge\cdot:\Omega^{1,1}(M)\to \Omega^{2,2}(M)$ is an isomorphism.

\medskip

From now on, given a compact 6-dimensional manifold $M$ endowed with a complex structure $J$, induced by a
nowhere vanishing closed (3,0)-form $\Psi$, and a balanced $J$-Hermitian structure $F$, i.e.
$d\Psi=0=d F^2$, we denote by $\mathcal L^{2,2}(M,J,F)$ the space
$$
\mathcal L^{2,2}(M,J,F)=\mathcal H^{2,2}_{\mathrm{BC}}(M,J)\cap (F\wedge \mathcal P^2)
=\{\alpha\in\Omega^{2,2}(M)\,|\, d\alpha=\partial\bar\partial(\ast \alpha)=\alpha\wedge F=0\}.
$$
Motivated by the existence of Minkowski type IIB solutions on 6-dimensional nilmanifolds~\cite{GMPT2},
next we describe the space above for any 6-dimensional nilmanifold $M=\Gamma\backslash G$ endowed with an invariant
balanced Hermitian structure $(J,F)$. Notice that in this case $\mathcal L^{2,2}(M,J,F)$ can be obtained at the
level of the Lie algebra $\frg$ underlying $M=\Gamma\backslash G$. By Section~\ref{calcul} the possible Lie algebras are $\frh_2,\ldots,\frh_6$ and
$\frh_{19}^-$. We will use the description of the space of invariant
balanced Hermitian structures given in~\cite{UV2}.

\medskip

\subsection{The space $\mathcal L^{2,2}$ for complex structures in Family I}

Let $J$ be a complex structure in Family I on the Lie algebra $\frg=\frh_{i}$, $2\leq i\leq 6$.
According to \cite{UV2}, any balanced Hermitian metric $F$ is given by
\begin{equation}\label{F-familyI}
2F=i(\omega^{1\bar1} + s^2\,\omega^{2\bar2} + t^2\,\omega^{3\bar3}) + u\,\omega^{1\bar2} -\bar u\,\omega^{2\bar1},
\quad\quad s,t\in \mathbb{R}-\{0\},
\quad u\in \mathbb{C},
\end{equation}
where $s^2>|u|^2$ and $s^2+D=i\lambda\bar u$.
Moreover, by \cite[Proposition~7.7]{COUV} the coefficients of
the complex structure must satisfy $2\,\Real D\neq \lambda^2+\rho$ (see also Table 1).

In order to describe the space $\mathcal L^{2,2}(M,J,F)$, we first notice that any $\partial\db (\omega^{j\bar k})$ vanishes except for $\partial\db (\omega^{3\bar3})=(2\,\Real D-\lambda^2-\rho)\,\omega^{12\bar1\bar2}\neq 0$. So using the Hermitian metric corresponding to \eqref{F-familyI} and the
explicit basis of $H^{2,2}_{\mathrm{BC}}$ given in Section~\ref{calcul}, we have
$$
\mathcal H^{2,2}_{\mathrm{BC}}=\langle \omega^{12\bar 1\bar3},\,\omega^{12\bar 2\bar3},\,\omega^{13\bar1\bar2},\,\omega^{23\bar 1\bar2},\, \omega^{13\bar2\bar3},\,\omega^{23\bar1\bar3},\,\omega^{13\bar1\bar3} - D\,\omega^{23\bar2\bar3}\rangle
$$
when $\lambda=0$ and $D\in\mathbb R$ (notice that $D<0$), and
$$
\mathcal H^{2,2}_{\mathrm{BC}}=\langle \omega^{12\bar 1\bar3},\,\omega^{12\bar 2\bar3},\,\omega^{13\bar1\bar2},\,\omega^{23\bar 1\bar2},\,\omega^{13\bar2\bar3}+\omega^{23\bar1\bar3} +\lambda\,\omega^{23\bar2\bar3},\,\lambda\,\omega^{13\bar1\bar3}+\bar D\,\omega^{13\bar2\bar3} +D\,\omega^{23\bar1\bar3}\rangle,
$$
in any other case.

Now, it is straightforward to verify that the forms $\omega^{12\bar 1\bar3}$, $\omega^{12\bar 2\bar3}$, $\omega^{13\bar1\bar2}$
and $\omega^{23\bar 1\bar2}$ are annihilated by $F$.

If $\lambda=0$ and $D<0$, then a direct computation shows that
$$
\omega^{13\bar 2\bar3}\wedge F=\bar u\,\omega^{123\bar1\bar2\bar3},\quad \omega^{23\bar 1\bar3}\wedge F=-u\,\omega^{123\bar1\bar2\bar3},
\quad
(\omega^{13\bar 1\bar3}-D\,\omega^{23\bar2\bar3})\wedge F=-2iD\,\omega^{123\bar1\bar2\bar3}.
$$
We consider two (2,2)-forms $\theta_1$ and $\theta_2$ depending on the vanishing of the metric coefficient $u$ as follows:
$\theta_1=\omega^{13\bar2\bar3}$ and $\theta_2=\omega^{23\bar1\bar3}$ when $u=0$;
otherwise, $\theta_1=u\,\omega^{13\bar2\bar3} + \bar u\,\omega^{23\bar1\bar3}$ and
$\theta_2=2i\,D\,\omega^{23\bar1\bar3} - u\,(\omega^{13\bar1\bar3}-D\omega^{23\bar2\bar3})$.
It is easy to check that the forms $\theta_1$ and $\theta_2$ are harmonic and satisfy $\theta_i\wedge F=0$ for $i=1,2$.

For the remaining complex structures in Family I admitting balanced metric, the coefficient $\lambda$ is nonzero and in addition to $\omega^{12\bar 1\bar3}$, $\omega^{12\bar 2\bar3}$, $\omega^{13\bar1\bar2}$
and $\omega^{23\bar 1\bar2}$, we can consider one more harmonic (2,2)-form belonging to the space $F\wedge \mathcal P^2$, namely
$$
\gamma=(\lambda s^2 - 2\,\Imag (Du))(\omega^{13\bar2\bar3}+\omega^{23\bar1\bar3} +\lambda\,\omega^{23\bar2\bar3}) - (\lambda - 2\,\Imag u)\,(\lambda\,\omega^{13\bar1\bar3}+\bar D\,\omega^{13\bar2\bar3} +D\,\omega^{23\bar1\bar3}).
$$

\begin{lemma}
Let $J$ be a complex structure in Family I such that $\lambda\neq 0$ and admits balanced $J$-Hermitian metrics. Then $\gamma\neq 0$.
\end{lemma}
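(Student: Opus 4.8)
The plan is to exploit the fact that $\gamma$ is a \emph{real}-scalar linear combination of the two basis generators of $\mathcal H^{2,2}_{\mathrm{BC}}$ in the case $\lambda\neq 0$. Writing
$$
A=\omega^{13\bar2\bar3}+\omega^{23\bar1\bar3} +\lambda\,\omega^{23\bar2\bar3},
\qquad
B=\lambda\,\omega^{13\bar1\bar3}+\bar D\,\omega^{13\bar2\bar3} +D\,\omega^{23\bar1\bar3},
$$
we have $\gamma=c_1\,A-c_2\,B$ with $c_1=\lambda s^2 - 2\,\Imag(Du)$ and $c_2=\lambda - 2\,\Imag u$, both real. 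Since $\lambda\neq 0$, the monomial $\omega^{13\bar1\bar3}$ occurs only in $B$ and $\omega^{23\bar2\bar3}$ only in $A$; reading off these two coefficients in the equation $\gamma=0$ forces $c_2\lambda=0$ and $c_1\lambda=0$. Hence $A$ and $B$ are linearly independent, and $\gamma$ vanishes if and only if $c_1=c_2=0$. It therefore suffices to show that $c_1$ and $c_2$ cannot vanish simultaneously for a balanced structure with $\lambda\neq 0$.

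First I would argue by contradiction, assuming $c_1=c_2=0$. Writing $u=u_1+iu_2$, the condition $c_2=0$ gives $\Imag u=u_2=\lambda/2$. Feeding this into the balanced constraint $s^2+D=i\lambda\bar u$ from \eqref{F-familyI} and separating real and imaginary parts yields
$$
s^2+\Real D=\lambda u_2=\frac{\lambda^2}{2},
\qquad
\Imag D=\lambda u_1,
$$
so that $s^2=\tfrac{\lambda^2}{2}-\Real D$ and, using $\lambda\neq 0$, $u_1=\Imag D/\lambda$.

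The decisive step is to substitute these expressions into $c_1=0$. Using $\Imag(Du)=\Real D\,\Imag u+\Imag D\,\Real u=\tfrac{\lambda}{2}\Real D+\tfrac{1}{\lambda}(\Imag D)^2$ together with the formula for $s^2$, a short computation reduces $c_1=0$ to the single real equation
$$
\frac{\lambda^4}{4}=\lambda^2\,\Real D+(\Imag D)^2 .
$$
On the other hand, positive-definiteness of the metric \eqref{F-familyI} is the strict inequality $s^2>|u|^2$. Substituting $s^2=\tfrac{\lambda^2}{2}-\Real D$ and $|u|^2=(\Imag D)^2/\lambda^2+\lambda^2/4$ and clearing the positive factor $\lambda^2$, this becomes
$$
\frac{\lambda^4}{4}>\lambda^2\,\Real D+(\Imag D)^2,
$$
which contradicts the equality forced by $c_1=0$. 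Hence $c_1,c_2$ cannot both vanish and $\gamma\neq 0$.

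I expect the only delicate point to be the bookkeeping that reexpresses both the condition $c_1=0$ and the metric-positivity inequality in terms of the same normalized quantity $\lambda^2\,\Real D+(\Imag D)^2$; once this is done, the clash between the \emph{strict} inequality coming from positive-definiteness and the \emph{equality} coming from $c_1=0$ is immediate. It is worth noting that the argument uses only $\lambda\neq 0$, the balanced equation and the positivity $s^2>|u|^2$, and never the admissibility constraint $2\,\Real D\neq\lambda^2+\rho$.
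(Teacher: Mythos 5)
Your proof is correct, and while its first half mirrors the paper's computation, you close the argument by a genuinely different and more self-contained route. Both proofs start the same way: $\gamma=0$ forces $c_1=c_2=0$ (the paper leaves this linear-independence step implicit; you make it explicit via the coefficients of $\omega^{13\bar1\bar3}$ and $\omega^{23\bar2\bar3}$, which carry the factor $\lambda\neq0$), and then the real and imaginary parts of the balanced equation $s^2+D=i\lambda\bar u$ reduce everything to a single real relation between $\lambda$ and $D$: your equation $\frac{\lambda^4}{4}=\lambda^2\,\Real D+(\Imag D)^2$ is equivalent, given $s^2=\frac{\lambda^2}{2}-\Real D>0$, to the paper's conclusion $s^4=|D|^2$, i.e. $|D|+\Real D=\frac{\lambda^2}{2}$. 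The two proofs then diverge at the final contradiction. The paper cites the classification in \cite{COUV} (reflected in Table~1): no complex structure in Family I with $\lambda\neq0$ admitting balanced metrics satisfies $|D|+\Real D=\frac{\lambda^2}{2}$. You instead invoke the positive-definiteness constraint $s^2>|u|^2$, which is part of the parametrization \eqref{F-familyI}, and observe that under $c_2=0$ it becomes the strict inequality $\frac{\lambda^4}{4}>\lambda^2\,\Real D+(\Imag D)^2$, clashing directly with the equality forced by $c_1=0$. What each buys: the paper's appeal to \cite{COUV} makes the ending one line, but imports an external non-existence criterion; your version needs nothing beyond the data attached to the given balanced metric (the balanced equation plus positivity), and in effect re-derives exactly the piece of the \cite{COUV} criterion that the paper quotes.
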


\begin{proof}
Let us suppose that $\gamma=0$ and $\lambda\neq 0$. Then $\lambda=2\,\Imag u$ and $\lambda s^2=2\,\Imag(Du)$, and therefore \begin{equation}\label{condicion_gamma}s^2\,\Imag u = \Real u\,\Imag D + \Imag u\,\Real D.\end{equation}  On the other hand, taking real and imaginary parts in $s^2+D=i\,\lambda\,\bar u$ one obtains that $s^2+\Real D=\lambda\,\Imag u$ and $\Imag D=\lambda\,\Real u$.  Substituting in~\eqref{condicion_gamma} we conclude that $s^4=|D|^2$ and condition $s^2+\Real D=\lambda\,\Imag u$ reads as $|D|+\Real D=\frac{\lambda^2}{2}$.  But this contradicts~\cite{COUV} because any complex structure in Family I with $\lambda\neq 0$ and admitting balanced $J$-Hermitian metric satisfies $|D|+\Real D\neq\frac{\lambda^2}{2}$ (see also Table~1).
\end{proof}

\begin{proposition}\label{dim_FI}
Let $M=\Gamma\backslash G$ be a nilmanifold endowed with a complex structure $J$ in Family I admitting balanced metric.
Then, for any invariant balanced $J$-Hermitian structure $F$ we have that
$\dim \mathcal L^{2,2}(\Gamma\backslash G,J,F)=6$ if $\lambda=0$, and $\dim \mathcal L^{2,2}(\Gamma\backslash G,J,F)=5$ otherwise.
Furthermore, with respect to the basis $\{\omega^1,\omega^2,\omega^3\}$ given in Section~\ref{calcul} and the balanced structure \eqref{F-familyI}
we have
$$
\mathcal L^{2,2}(\Gamma\backslash G,J,F)=
\begin{cases}\langle\omega^{12\bar 1\bar3},\,\omega^{12\bar 2\bar3},\,\omega^{13\bar1\bar2},\,\omega^{23\bar 1\bar2},\,\theta_1, \theta_2\rangle,\quad \text{ if }\lambda=0,\\[10pt] \langle\omega^{12\bar 1\bar3},\,\omega^{12\bar 2\bar3},\,\omega^{13\bar1\bar2},\,\omega^{23\bar 1\bar2},\,\gamma\rangle,\quad \text{ if }\lambda\neq0.\end{cases}
$$
\end{proposition}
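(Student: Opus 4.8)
The plan is to exhibit $\mathcal L^{2,2}(\Gamma\backslash G,J,F)$ as the kernel of a single linear functional on $\mathcal H^{2,2}_{\mathrm{BC}}$. Everything reduces to the Lie algebra $\frg$, and since the space of invariant $(3,3)$-forms is the line $\langle\omega^{123\bar1\bar2\bar3}\rangle$, the assignment $\Phi(\alpha)=\alpha\wedge F$ defines a linear map $\Phi\colon\mathcal H^{2,2}_{\mathrm{BC}}\to\langle\omega^{123\bar1\bar2\bar3}\rangle$ whose kernel is exactly $\mathcal H^{2,2}_{\mathrm{BC}}\cap(F\wedge\mathcal P^2)=\mathcal L^{2,2}$. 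Hence $\dim\mathcal L^{2,2}=\dim\mathcal H^{2,2}_{\mathrm{BC}}$ if $\Phi\equiv0$ and $\dim\mathcal H^{2,2}_{\mathrm{BC}}-1$ otherwise. From the basis of $H^{2,2}_{\mathrm{BC}}$ in Section~\ref{calcul}, using that the balanced constraint $2\,\Real D\neq\lambda^2+\rho$ removes the $\delta$-generator $\omega^{12\bar1\bar2}$, one reads off $\dim\mathcal H^{2,2}_{\mathrm{BC}}=7$ when $\lambda=0$ and $\dim\mathcal H^{2,2}_{\mathrm{BC}}=6$ when $\lambda\neq0$; note that if $\lambda=0$ then $s^2+D=i\lambda\bar u=0$ forces $D=-s^2<0$, so we are indeed in the branch ``$\lambda=0$, $D\in\mathbb R$'' of that basis.

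First I would dispose of the four common generators $\omega^{12\bar1\bar3},\omega^{12\bar2\bar3},\omega^{13\bar1\bar2},\omega^{23\bar1\bar2}$, which lie in $\ker\Phi$ by the verification recorded above; thus $\dim\mathcal L^{2,2}\geq4$ in every case and it remains only to understand $\Phi$ on the span of the remaining generators. In the case $\lambda=0$ the computation $(\omega^{13\bar1\bar3}-D\,\omega^{23\bar2\bar3})\wedge F=-2iD\,\omega^{123\bar1\bar2\bar3}$ shows that $\Phi$ is nonzero on the three-dimensional complement, because $D<0$; hence $\Phi$ has rank one there and its kernel is two-dimensional. Since $\theta_1,\theta_2$ were already shown to be annihilated by $F$, it suffices to check they are linearly independent, which holds both for $u=0$ and $u\neq0$ (in the latter case $\theta_2$ carries a nonzero $\omega^{13\bar1\bar3}$-component while $\theta_1$ does not). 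Therefore $\langle\theta_1,\theta_2\rangle$ is the full two-dimensional kernel and $\dim\mathcal L^{2,2}=4+2=6$, with the asserted basis.

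For $\lambda\neq0$ the decisive step is to evaluate $\Phi$ on the two remaining generators $\eta_1=\omega^{13\bar2\bar3}+\omega^{23\bar1\bar3}+\lambda\,\omega^{23\bar2\bar3}$ and $\eta_2=\lambda\,\omega^{13\bar1\bar3}+\bar D\,\omega^{13\bar2\bar3}+D\,\omega^{23\bar1\bar3}$. A direct wedge computation with the metric \eqref{F-familyI} gives, up to a common nonzero constant, $\eta_1\wedge F\propto(\lambda-2\,\Imag u)\,\omega^{123\bar1\bar2\bar3}$ and $\eta_2\wedge F\propto(\lambda s^2-2\,\Imag(Du))\,\omega^{123\bar1\bar2\bar3}$. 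Since $\gamma=(\lambda s^2-2\,\Imag(Du))\,\eta_1-(\lambda-2\,\Imag u)\,\eta_2$ and $\eta_1,\eta_2$ are linearly independent, these two scalars vanish simultaneously if and only if $\gamma=0$; by the preceding Lemma $\gamma\neq0$, so $\Phi$ does not vanish identically on $\langle\eta_1,\eta_2\rangle$. Consequently $\Phi$ has rank one there, its kernel is the line $\langle\gamma\rangle$, and $\dim\mathcal L^{2,2}=4+1=5$. The only real work is the signed wedge computation identifying the two scalar coefficients above; once it is in place, the dimension count and the identification of the kernel with $\gamma$ follow immediately from the Lemma, and the upshot---that $\dim\mathcal L^{2,2}$ equals $6$ or $5$ according as $\lambda=0$ or $\lambda\neq0$---depends only on $J$ and not on the chosen balanced $F$.
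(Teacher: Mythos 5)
Your proposal is correct and takes essentially the same route as the paper: the same harmonic basis of $\mathcal H^{2,2}_{\mathrm{BC}}$ (with the $\delta$-generator $\omega^{12\bar1\bar2}$ excluded by the balanced constraint $2\,\Real D\neq\lambda^2+\rho$), the same wedge-with-$F$ evaluations on the generators, and the same appeal to the Lemma ($\gamma\neq0$) to settle the $\lambda\neq0$ case. The only difference is presentational: by packaging $\alpha\mapsto\alpha\wedge F$ as a rank-one linear functional $\Phi$ on $\mathcal H^{2,2}_{\mathrm{BC}}$, you make explicit the dimension count and the fact that $\gamma$ spans the kernel on $\langle\eta_1,\eta_2\rangle$ (via the common constant $i/2$ in $\Phi(\eta_1)$, $\Phi(\eta_2)$), steps the paper leaves implicit after exhibiting the annihilated forms.
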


Notice that if $\lambda=0$ then $\frg\cong\frh_3$ or $\frh_5$. By Table 1, for $\frh_3$ we have $\rho=0$, $D=-1$, $s^2=1$ and $u=0$, whereas for $\frh_5$ we have $\rho=1$, $D\in(-1/4,0)$ and $u$ can be zero or not.

\medskip

\subsection{The space $\mathcal L^{2,2}$ for complex structures in Family III}

Let $J$ be a complex structure in Family~III on the Lie algebra $\frh_{19}^-$, i.e. $\varepsilon =0$.
In \cite{UV1} it is proved that any balanced Hermitian metric $F$ is given by
\begin{equation}\label{balanced en h19}
2F=i(r^2\,\omega^{1\bar1} + s^2\,\omega^{2\bar2} + t^2\,\omega^{3\bar3})+v\,(\omega^{2\bar3} -\omega^{3\bar2}),
\quad\quad r,s,t\in \mathbb{R}-\{0\},
\end{equation}
where either $(t^2,v)=(1,0)$ or $v=1$ with $s^2t^2>1$.

Since any $\partial\db (\omega^{j\bar k})$ vanishes except for $\partial\db (\omega^{2\bar2})=-2\,\omega^{13\bar1\bar3}$ and $\partial\db (\omega^{3\bar3})=-2\,\omega^{12\bar1\bar2}$, using the Hermitian metric corresponding to \eqref{balanced en h19} and the
explicit basis of $H^{2,2}_{\mathrm{BC}}$ given in Section~\ref{calcul}, we conclude that
$$
\mathcal H^{2,2}_{\mathrm{BC}}=\langle \omega^{12\bar 1\bar3},\,\omega^{13\bar 1\bar2},\,\omega^{13\bar2\bar3}+\omega^{23\bar 1\bar3},\, \omega^{23\bar2\bar3}\rangle.
$$
A direct computation shows that $(\omega^{13\bar 2\bar3}+\omega^{23\bar1\bar3})\wedge F=0$ and
$$
\omega^{12\bar 1\bar3}\wedge F=v\,\omega^{123\bar1\bar2\bar3},\quad \omega^{13\bar 1\bar2}\wedge F=-v\,\omega^{123\bar1\bar2\bar3},
\quad \omega^{23\bar 2\bar3}\wedge F=i\,r^2\,\omega^{123\bar1\bar2\bar3}.
$$

\begin{proposition}
Let $M_{19}^-$ be a nilmanifold with underlying Lie algebra $\frh_{19}^-$ endowed with a complex structure $J$ in Family III.
Then, $\dim \mathcal L^{2,2}(M_{19}^-,J,F)=3$ for any invariant balanced $J$-Hermitian structure~$F$.
Moreover, with respect to the basis $\{\omega^1,\omega^2,\omega^3\}$ given in Section~\ref{calcul}
and the balanced structure~\eqref{balanced en h19} we have
$$
\mathcal L^{2,2}(M_{19}^-,J,F)=
\begin{cases}\langle\omega^{12\bar 1\bar3},\,\omega^{13\bar 1\bar2},\,\omega^{13\bar2\bar3}+\omega^{23\bar 1\bar3}\rangle,\quad \text{ if }v=0,\\[10pt] \langle\omega^{12\bar 1\bar3}+\omega^{13\bar 1\bar2},\,\omega^{13\bar2\bar3}+\omega^{23\bar 1\bar3},\,i\,r^2\,\omega^{12\bar1\bar3}-\omega^{23\bar 2\bar3}\rangle,\quad \text{ if }v=1.\end{cases}
$$
\end{proposition}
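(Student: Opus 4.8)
The plan is to read off $\mathcal L^{2,2}(M_{19}^-,J,F)$ as the intersection of the already-computed harmonic space with the kernel of wedging by $F$. Indeed, by definition $\mathcal L^{2,2}(M_{19}^-,J,F)=\mathcal H^{2,2}_{\mathrm{BC}}\cap(F\wedge\mathcal P^2)$, and $F\wedge\mathcal P^2$ is precisely $\ker\{F\wedge\cdot\colon\Omega^{2,2}(M)\to\Omega^{3,3}(M)\}$. From the computation preceding the statement, $\mathcal H^{2,2}_{\mathrm{BC}}$ is $4$-dimensional with basis $\{\omega^{12\bar1\bar3},\,\omega^{13\bar1\bar2},\,\omega^{13\bar2\bar3}+\omega^{23\bar1\bar3},\,\omega^{23\bar2\bar3}\}$, so I would write an arbitrary harmonic class as
$$
\alpha=a\,\omega^{12\bar1\bar3}+b\,\omega^{13\bar1\bar2}+c\,(\omega^{13\bar2\bar3}+\omega^{23\bar1\bar3})+e\,\omega^{23\bar2\bar3},\qquad a,b,c,e\in\mathbb{C},
$$
and then impose the single extra condition $\alpha\wedge F=0$.

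Using the four wedge products recorded just above the statement, namely $(\omega^{13\bar2\bar3}+\omega^{23\bar1\bar3})\wedge F=0$, $\omega^{12\bar1\bar3}\wedge F=v\,\omega^{123\bar1\bar2\bar3}$, $\omega^{13\bar1\bar2}\wedge F=-v\,\omega^{123\bar1\bar2\bar3}$ and $\omega^{23\bar2\bar3}\wedge F=i\,r^2\,\omega^{123\bar1\bar2\bar3}$, the condition $\alpha\wedge F=0$ collapses to the single scalar equation $(a-b)\,v+i\,r^2\,e=0$. Thus cutting the $4$-dimensional space $\mathcal H^{2,2}_{\mathrm{BC}}$ by $F\wedge\cdot$ amounts to imposing this one linear relation on $(a,b,c,e)$. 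Since the balanced metrics \eqref{balanced en h19} fall into exactly the two regimes $v=0$ and $v=1$ coming from the classification in \cite{UV1}, I would finish with a short case analysis.

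For $v=0$ the relation reads $i\,r^2\,e=0$; as $r\in\mathbb{R}-\{0\}$ this forces $e=0$, leaving $a,b,c$ free and giving the $3$-dimensional space spanned by $\omega^{12\bar1\bar3}$, $\omega^{13\bar1\bar2}$ and $\omega^{13\bar2\bar3}+\omega^{23\bar1\bar3}$. For $v=1$ the relation is $a-b+i\,r^2\,e=0$; solving for $a$ and keeping $b,c,e$ free, the choices $(b,c,e)=(1,0,0),(0,1,0),(0,0,-1)$ yield the basis $\omega^{12\bar1\bar3}+\omega^{13\bar1\bar2}$, $\omega^{13\bar2\bar3}+\omega^{23\bar1\bar3}$ and $i\,r^2\,\omega^{12\bar1\bar3}-\omega^{23\bar2\bar3}$. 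In both cases $\dim\mathcal L^{2,2}(M_{19}^-,J,F)=3$, independently of the remaining metric data $r,s,t$, which matches the claimed bases verbatim.

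Since the genuinely delicate inputs — that the four listed forms are harmonic and exhaust $H^{2,2}_{\mathrm{BC}}$, together with the four wedge identities — are already established, the only thing requiring care here is the bookkeeping of the case split. The key point is to invoke $r\neq 0$ so that the coefficient $i\,r^2$ is nonzero and the equation $(a-b)\,v+i\,r^2\,e=0$ is a genuine single constraint; this guarantees the dimension drops by exactly one, to $3$, rather than by zero or two. That observation, combined with the fact that the classification of invariant balanced metrics on $\frh_{19}^-$ leaves only the possibilities $v\in\{0,1\}$, is what I would regard as the crux to state cleanly.
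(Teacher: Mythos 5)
Your proposal is correct and is essentially the paper's own argument: the paper likewise reduces everything to the $4$-dimensional harmonic space and the four recorded wedge identities, and the proposition then follows from exactly the linear-algebra case split $(a-b)v+ir^2e=0$ that you carry out (your bases for $v=0$ and $v=1$ match the stated ones verbatim). The only implicit inputs — the harmonic basis, the wedge products, the identification of $F\wedge\mathcal P^2$ with $\ker(F\wedge\cdot)$, and the restriction to $v\in\{0,1\}$ from the classification in \cite{UV1} — are all established in the paper just as you cite them.
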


\medskip

\subsection{The space $\mathcal L^{2,2}$ for the Iwasawa manifold}

With respect to the usual (1,0)-basis $\{\omega^1,\omega^2,\omega^3\}$, any balanced Hermitian metric on the Iwasawa manifold $M_0$ is equivalent to one in the following family:
\begin{equation}\label{metrics-J0}
2F=i(\omega^{1\bar1} + \omega^{2\bar2} + t^2 \omega^{3\bar3}),\quad\  t\in \mathbb{R}-\{0\}.
\end{equation}
By~\cite{Schw}, or directly by equations~\eqref{iwa} for $\rho=1$, we have
$$
H^{2,2}_{\mathrm{BC}}=\langle[\omega^{12\bar1\bar3}],\,[\omega^{12\bar2\bar3}],\,
[\omega^{13\bar1\bar2}],\,[\omega^{13\bar1\bar3}],\,[\omega^{13\bar2\bar3}],\,[\omega^{23\bar1\bar2}],\,
[\omega^{23\bar1\bar3}],\,[\omega^{23\bar2\bar3}]\rangle.
$$
It is straightforward to see that the representatives above are harmonic $(2,2)$-forms. Moreover, all the generators are annihilated by $F$ except for $\omega^{13\bar1\bar3}\wedge F=\omega^{23\bar2\bar3}\wedge F=\frac{i}{2}\,\omega^{123\bar1\bar2\bar3}$.

\begin{proposition}\label{Iw}
The space $\mathcal L^{2,2}(M_0,F)$ has dimension $7$ for any invariant balanced Hermitian structure $F$ on the Iwasawa manifold $M_0$. Furthermore,
$$
\mathcal L^{2,2}(M_0,F)=
\langle\omega^{12\bar 1\bar3},\,\omega^{12\bar 2\bar3},\,\omega^{13\bar1\bar2},\,\omega^{13\bar 2\bar3},\,\omega^{23\bar 1\bar2},\,\omega^{23\bar 1\bar3},\,\omega^{13\bar1\bar3}-\omega^{23\bar 2\bar3}\rangle.
$$
\end{proposition}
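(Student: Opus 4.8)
The plan is to compute $\mathcal L^{2,2}(M_0,F)$ explicitly at the Lie-algebra level, exactly as the two preceding propositions do for Families I and III. This is legitimate because the Iwasawa manifold is complex-parallelizable, so by Theorem~\ref{Angella} together with the result of Rollenske cited at the start of Section~\ref{calcul}, all the relevant cohomology can be computed from invariant forms. The defining data are already in place: the structure equations \eqref{iwa} with $\rho=1$, the family of balanced metrics \eqref{metrics-J0}, and the basis of $H^{2,2}_{\mathrm{BC}}$ recalled just above the statement. The strategy is therefore to intersect the harmonic representatives of $H^{2,2}_{\mathrm{BC}}$ with the kernel of $F\wedge\,\cdot$, precisely following the recipe $\mathcal L^{2,2}=\mathcal H^{2,2}_{\mathrm{BC}}\cap(F\wedge\mathcal P^2)$ set out in the introduction to Section~\ref{supergrav}.

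\textbf{The two verifications.} First I would confirm that the eight listed generators of $H^{2,2}_{\mathrm{BC}}$ are genuinely harmonic $(2,2)$-forms for the metric \eqref{metrics-J0}, i.e.\ that each is $d$-closed and satisfies $\partial\db(\ast\alpha)=0$. Closedness of forms of type $\omega^{ij\bar k\bar l}$ follows directly from \eqref{iwa}, since $d\omega^1=d\omega^2=0$ and $d\omega^3=\omega^{12}$; the condition on $\ast\alpha$ is checked using that $\ast$ preserves the invariant setting and that $\partial\db$ annihilates all the relevant $(2,2)$-forms by the same structure equations. Second, I would compute each wedge product $\omega^{ij\bar k\bar l}\wedge F$ using \eqref{metrics-J0}. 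As stated, every generator is annihilated by $F$ except the pair $\omega^{13\bar1\bar3}$ and $\omega^{23\bar2\bar3}$, for which $\omega^{13\bar1\bar3}\wedge F=\omega^{23\bar2\bar3}\wedge F=\tfrac{i}{2}\,\omega^{123\bar1\bar2\bar3}$. This identity is the crux: it shows that of these two generators only the single combination $\omega^{13\bar1\bar3}-\omega^{23\bar2\bar3}$ survives in the kernel of $F\wedge\,\cdot$.

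\textbf{Assembling the dimension count.} Given the two verifications, the conclusion is immediate. Six of the eight generators lie in $F\wedge\mathcal P^2$ outright, and the two-dimensional span $\langle\omega^{13\bar1\bar3},\omega^{23\bar2\bar3}\rangle$ contributes exactly the one-dimensional kernel $\langle\omega^{13\bar1\bar3}-\omega^{23\bar2\bar3}\rangle$ (since $F\wedge\,\cdot$ restricted to it is a nonzero linear functional). Hence $\dim\mathcal L^{2,2}(M_0,F)=6+1=7$, with the stated spanning set, and the answer is visibly independent of the parameter $t$, giving the ``for any $F$'' assertion at once.

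\textbf{Anticipated obstacle.} The genuinely delicate point is the harmonicity check $\partial\db(\ast\alpha)=0$ rather than the wedge-product computation, which is purely combinatorial. One must verify that the Hodge star of each representative, computed with respect to the one-parameter metric \eqref{metrics-J0}, is again $\partial\db$-closed; the factor $t^2$ attached to $\omega^{3\bar3}$ could in principle spoil this for the two off-diagonal generators $\omega^{13\bar1\bar3}$ and $\omega^{23\bar2\bar3}$. I would handle this by noting that $\ast$ sends each invariant $(2,2)$-form to an invariant $(1,1)$-form and that $\partial\db$ of any such $(1,1)$-form lands in the one-dimensional space spanned by $\omega^{12\bar1\bar2}$; a direct check shows this image vanishes for all eight representatives, so the $t$-dependence is harmless and harmonicity holds uniformly in $t$.
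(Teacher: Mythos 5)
Your proposal is correct and follows essentially the same route as the paper: the paper's argument is precisely the computation preceding the proposition (verify the eight listed representatives of $H^{2,2}_{\mathrm{BC}}$ are harmonic for the metric \eqref{metrics-J0}, compute the wedges with $F$, and observe that only the combination $\omega^{13\bar1\bar3}-\omega^{23\bar2\bar3}$ survives from the two non-annihilated generators). Your extra observation — that $\partial\db$ of an invariant $(1,1)$-form lies in $\langle\omega^{12\bar1\bar2}\rangle$, so harmonicity is insensitive to the parameter $t$ — is a correct and slightly more explicit justification of what the paper calls ``straightforward.''
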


As a consequence of Propositions~\ref{dim_FI} and~\ref{Iw} we get:

\begin{proposition}\label{vari}
The dimension of the space $\mathcal L^{2,2}$ is not stable under small deformations of the Iwasawa manifold.
\end{proposition}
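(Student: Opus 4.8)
The plan is to read off non-stability directly from the two dimension counts already obtained. By Proposition~\ref{Iw} the central fibre has $\dim\mathcal L^{2,2}=7$, while by Proposition~\ref{dim_FI} any nearby fibre lying in Family~I and admitting balanced metrics has $\dim\mathcal L^{2,2}\in\{5,6\}$. It therefore suffices to produce a single holomorphic family $(M,J_t)_{|t|<1}$ with $(M,J_0)$ the Iwasawa manifold and with each $J_t$, $t\neq 0$, a balanced complex structure in Family~I; to match the sharper value announced in the Introduction I will arrange in addition that $\lambda\neq 0$ for $t\neq 0$, so that the $\lambda\neq 0$ case of Proposition~\ref{dim_FI} applies.

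First I would recall from Section~\ref{calcul} that every sufficiently small deformation of $M_0$ is again invariant and belongs to Family~I, so Proposition~\ref{dim_FI} is available on all nearby fibres. Starting from the complex-parallelizable equations \eqref{iwa} with $\rho=1$, I would deform the $(1,0)$-coframe $\{\omega^1,\omega^2,\omega^3\}$ by a one-parameter Beltrami-type perturbation as in \cite{COUV}, compute the differentials of the perturbed coframe, and bring the result into normalized Family~I form by means of \cite[Proposition~3.7]{COUV}. The points to verify are that the normalized coefficients $(\rho,\lambda,D)=(\rho(t),\lambda(t),D(t))$ satisfy $\lambda(t)\neq 0$ for $0<|t|<1$ and that the unnormalized equations degenerate back to \eqref{iwa} as $t\to 0$; the resulting triples then appear among the $\frh_5$ rows of Table~1 with $\lambda\neq 0$.

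Next I would confirm, using the balanced-metric description of \cite{UV2} recorded in \eqref{F-familyI} together with the constraint $2\,\Real D\neq\lambda^2+\rho$ of \cite{COUV}, that each $J_t$ with $t\neq 0$ admits invariant balanced Hermitian metrics (equivalently, that its $(\rho,\lambda,D)$ lie in Table~1). The proposition then follows at once: Proposition~\ref{Iw} gives $\dim\mathcal L^{2,2}(M_0,F)=7$ for every balanced $F$ of the form \eqref{metrics-J0}, whereas Proposition~\ref{dim_FI} gives $\dim\mathcal L^{2,2}(M,J_t,F)=5$ for every $t\neq 0$ and every invariant balanced $F$. As $7\neq 5$, the dimension jumps at $t=0$ and is not stable under small deformations.

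The main obstacle is the construction in the second paragraph. The Iwasawa manifold is complex-parallelizable, so the $\omega^{1\bar1}$-coefficient of $d\omega^3$ vanishes and $J_0$ is \emph{not} a member of Family~I (whose normalization fixes that coefficient equal to~$1$). One must therefore choose the perturbation so that this coefficient becomes nonzero for $t\neq 0$; the rescaling in \cite[Proposition~3.7]{COUV} then carries the structure into Family~I, generally forcing $D(t)$ to blow up as $t\to 0$ exactly as happens in the proof of Proposition~\ref{no-closed}, while for each fixed $t\neq 0$ one still obtains a genuine Family~I structure with $\lambda(t)\neq 0$. Checking that this same family retains balanced metrics for all $t\neq 0$ is the last delicate point, and it is here that the explicit balanced-metric computations of \cite{COUV,UV2} are needed.
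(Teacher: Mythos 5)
Your strategy is the paper's: play Proposition~\ref{Iw} (dimension $7$ on the Iwasawa manifold) against Proposition~\ref{dim_FI} (dimension $5$ for Family~I structures with $\lambda\neq 0$), and exhibit a holomorphic family of balanced Family~I structures with $\lambda\neq0$ degenerating to the Iwasawa manifold. But the proposal stops exactly where the proof has to begin: the explicit family is never produced, and you yourself label the construction ``the main obstacle''. Since the numerical comparison $7\neq 5$ is immediate, the \emph{entire} content of Proposition~\ref{vari} is the existence of such a family, so this is a genuine gap rather than a deferred routine verification. The paper fills it concretely: writing the Iwasawa equations as $d\eta^1=d\eta^2=0$, $d\eta^3=\eta^{12}$, it takes, inside the Kuranishi parameter space \eqref{def-space} of \cite{KS}, the family $\mu^1=\eta^1$, $\mu^2=\eta^2+a\,\eta^{\bar 2}$, $\mu^3=\eta^3$ with $|a|<1$, computes $d\mu^3=\frac{1}{1-|a|^2}(\mu^{12}-a\,\mu^{1\bar2})$, and then normalizes by $\omega^1=\mu^1$, $\omega^2=\frac{1}{\bar a}\mu^1+\mu^2$, $\omega^3=(1-|a|^2)\mu^3$, obtaining $d\omega^3=\omega^{12}+\omega^{1\bar1}-a\,\omega^{1\bar2}$, i.e. $(\rho,\lambda,D)=(1,|a|,0)$. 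Table~1 (the $\frh_5$ rows with $x=y=0$, $0<\lambda<1$) then guarantees balanced metrics for every $a\neq0$, and Proposition~\ref{dim_FI} gives dimension $5$.

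Beyond the omission, your heuristic for how the normalization must behave is inaccurate, and following it literally would make the one delicate check you postpone (balancedness for all $t\neq0$) harder than necessary. You predict that passing to Family~I should force $D(t)$ to blow up as $t\to0$, ``exactly as in Proposition~\ref{no-closed}''. In the paper's construction the opposite happens: the $\omega^{1\bar1}$-term is created by the change of $(1,0)$-basis (the coefficient $1/\bar a$ in $\omega^2$), not by the perturbation itself, and it is this basis change that degenerates as $a\to0$, while the normalized invariants stay bounded with $D\equiv 0$ and $\lambda=|a|\to0$. This matters because the deformed structures necessarily live on the fixed real nilmanifold, hence on $\frh_5$; by Table~1, a balanced Family~I structure on $\frh_5$ with $\rho=1$ and $\lambda\neq0$ must have $\Real D=0$ and $\Imag D<\frac{\lambda^2-1}{2}$, so a family with $|D(t)|\to\infty$ could only remain balanced if $\lambda(t)\to\infty$ as well — a regime your sketch gives no control over. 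A correct execution of your plan must therefore steer the normalized invariants into a bounded balanced region of Table~1, which is precisely what the paper's explicit choice accomplishes.
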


\begin{proof}
By Proposition~\ref{Iw}, $\dim \mathcal L^{2,2}(M_0,F)=7$ for any $F$. We will construct a deformation $M_a$, $a\in \mathbb{C}$ with $|a|<1$, of the Iwasawa manifold $M_0$ admitting balanced structures for each $a$ and such that $\dim \mathcal L^{2,2}(M_a,F)=5$ for $0<|a|<1$ and for any balanced $F$ on $M_a$.

Writing the structure equations of $M_0=(M,J_0)$ as
$$
d\eta^1=d\eta^2=0,\quad d\eta^3= \eta^{12},
$$
by~\cite{KS} any complex structure sufficiently near to $J_0$ has a basis of $(1,0)$-forms of the form \eqref{def-space} with
$\Phi^3_3=\Phi^1_2\Phi^2_1-\Phi^1_1\Phi^2_2$.
For each $a\in \mathbb{C}$ such that $|a|<1$, we consider the complex structure $J_a$ corresponding to $\Phi^2_2=a$ and all the remaining $\Phi^i_j=0$ in~\eqref{def-space}, that is,
\begin{equation}\label{base-mus}
\mu^1=\eta^1,\quad \mu^2=\eta^2 +a \eta^{\bar{2}} ,\quad \mu^3=\eta^3.
\end{equation}
Then, for any $a\in \mathbb{C}$ such that $|a|<1$ the structure equations of $J_a$ are
\begin{equation}\label{ecus-Ja-Iwasawa}
d\mu^1=d\mu^2=0,\quad d\mu^3=\frac{1}{1-|a|^2}(\mu^{12} - a \mu^{1\bar2}).
\end{equation}
Now, for each $a\not=0$, let us consider $\omega^1=\mu^1$, $\omega^2=\frac{1}{\bar{a}} \mu^1+\mu^2$ and $\omega^3=(1-|a|^2)\mu^3$.
Then, with respect to the basis of $(1,0)$-forms $\{\omega^1,\omega^2,\omega^3\}$ the equations \eqref{ecus-Ja-Iwasawa}
are reduced to the normalized structure equations
\begin{equation}\label{ecus-normal-Ja-Iwasawa}
d\omega^1=d\omega^2=0,\quad d\omega^3=\omega^{12} + \omega^{1\bar1} -a\, \omega^{1\bar2}.
\end{equation}
Therefore, the coefficients $(\rho,\lambda,D)$ can be supposed to take the values $\rho=1$, $\lambda=|a|$ and $D=0$.

Now, for any $a$ such that $0<|a|<1$, it follows from Table~1 that $J_a$ has balanced Hermitian structures~$F$.  Moreover, by Proposition~\ref{dim_FI}
we have $\dim \mathcal L^{2,2}(M_a,F)=5$ for any such $F$.
\end{proof}

\begin{remark}\label{explicit-curve}
{\rm
Let $M$ be the nilmanifold underlying the Iwasawa manifold $M_0=(M,J_0)$ and consider a balanced Hermitian metric $F_{0,t}$ given by \eqref{metrics-J0}.
Then from the proof of Proposition~\ref{vari} an explicit deformation of the balanced structure $(J_0,F_{0,t})$ along which the
dimension of $\mathcal L^{2,2}$ varies can be obtained as follows. Let us consider the metric
$$
F_{a,t}=\frac{i}{2} \left( \mu^{1\bar1} + \frac{1}{1-|a|^2}\mu^{2\bar2} + t^2 \mu^{3\bar3} \right),
$$
where $\{\mu^1,\mu^2,\mu^3\}$ denotes the basis \eqref{base-mus}.
Then $F_{a,t}$ is a $J_a$-Hermitian metric for any $a\in\Delta=\{a\in \mathbb{C}\mid |a|<0\}$, and from
\eqref{ecus-Ja-Iwasawa} we have that $F_{a,t}^2$ is a closed form, that is, $F_{a,t}$ is a balanced metric on $M$.
Therefore, $\dim \mathcal L^{2,2}(M,J_a,F_{a,t})=5$ for $a\in \Delta-\{0\}$, but $\dim \mathcal L^{2,2}(M,J_0,F_{0,t})=7$.
}
\end{remark}

\section{Appendix}\label{apendice}

Here we show the dimensions of the Bott-Chern cohomology groups for each pair $(\frg,J)$, except for the ones corresponding to the torus
and the Iwasawa manifold which can be found in~\cite{Schw}. In the following tables, different values of
the parameters correspond to non-isomorphic complex structures (see \cite{COUV} for details).
Since $h_{3,0}^{\mathrm{BC}}=1=h_{3,3}^{\mathrm{BC}}$, by the duality in the Bott-Chern cohomology it suffices to show the dimensions
$h_{p,q}^{\mathrm{BC}}$ for $(p,q)=(1,0),(2,0),(1,1),(2,1),(2,2),(3,1)$ and $(3,2)$.

\smallskip





\noindent
{\small
\renewcommand{\arraystretch}{1.4}
\begin{tabular}{|c|c|c|c|c|c|c|c|c|c|c|c|}
\hline
 & \multicolumn{4}{|c|}{Family I} & \multicolumn{7}{|c|}{Bott-Chern cohomology}\\
\hline
$\frg$ & $\rho$ & $\lambda$ &\multicolumn{2}{|c|}{$D=x+iy$} & $h_{1,0}^{\mathrm{BC}}$ & $h_{2,0}^{\mathrm{BC}}$ & $h_{1,1}^{\mathrm{BC}}$ & $h_{2,1}^{\mathrm{BC}}$ & $h_{3,1}^{\mathrm{BC}}$ & $h_{2,2}^{\mathrm{BC}}$ & $h_{3,2}^{\mathrm{BC}}$\\ \hline\hline


\multirow{6}{*}{$\frh_2$} & \multirow{2}{*}{0} & \multirow{2}{*}{0} & \multirow{2}{*}{$y=1$} & $x=0$ &\multirow{2}{*}{2} & \multirow{2}{*}{1} & \multirow{2}{*}{4} & \multirow{2}{*}{6}&\multirow{2}{*}{3} & 7 & \multirow{2}{*}{3} \\ \cline{5-5}\cline{11-11}

& & & & $x\neq 0$ & & & & & & 6 & \\ \cline{2-12}

& \multirow{4}{*}{1} & \multirow{4}{*}{1} & \multirow{4}{*}{$y>0$} & $x=-1\pm\sqrt{1-y^2}$ & \multirow{4}{*}{2} & \multirow{4}{*}{1} & 5 & \multirow{4}{*}{6} & \multirow{4}{*}{2} & \multirow{3}{*}{6} &\multirow{4}{*}{3}\\ \cline{5-5}\cline{8-8}

& & & & $x\neq -1\pm\sqrt{1-y^2}$ & & & \multirow{3}{*}{4} & & & & \\

& & & & $x\neq 1$ & & & & & & & \\ \cline{5-5}\cline{11-11}

& & & & $x=1$ & & & & & & 7 & \\ \hline\hline


$\frh_3$ & 0 & 0 & \multicolumn{2}{|c|}{$\pm 1$} & 2 & 1 & 4 & 6 & 3 & 7 & 3\\ \hline\hline


\multirow{4}{*}{$\frh_4$} & 0 & 1 & \multicolumn{2}{|c|}{$\frac{1}{4}$} & 2 & 1 & 4 & 6 & 3 & 6 & 3\\ \cline{2-12}

& \multirow{3}{*}{1} & \multirow{3}{*}{1} & \multicolumn{2}{|c|}{$-2$} & \multirow{3}{*}{2} & \multirow{3}{*}{1} & 5 & \multirow{3}{*}{6} & \multirow{3}{*}{2} & \multirow{2}{*}{6} & \multirow{3}{*}{3}\\ \cline{4-5} \cline{8-8}

& & & \multicolumn{2}{|c|}{$D\in\mathbb{R}-\{-2,0,1\}$} & & & \multirow{2}{*}{4} & & & & \\ \cline{4-5} \cline{11-11}

& & & \multicolumn{2}{|c|}{1} & & & & & & 7 & \\ \hline\hline


\multirow{15}{*}{$\frh_5$} & \multirow{2}{*}{0} & \multirow{2}{*}{1} & \multicolumn{2}{|c|}{0} & \multirow{2}{*}{2} & 2 & 6 & \multirow{2}{*}{6} & \multirow{2}{*}{3} & \multirow{2}{*}{6} & \multirow{2}{*}{3}\\ \cline{4-5}\cline{7-8}

& & & \multicolumn{2}{|c|}{$D\in\left(0,\frac{1}{4}\right)$} & & 1 & 4 & & & & \\ \cline{2-12}


& \multirow{13}{*}{1} & \multirow{5}{*}{0} & \multirow{3}{*}{$y=0$} & $x=0$ & & 2 & & & & 7 & \\ \cline{5-5}\cline{7-7}\cline{11-11}

& & & & $x=\frac{1}{2}$ & & \multirow{4}{*}{1} & & & & 8 & \\ \cline{5-5}\cline{11-11}

& & & & $x\neq 0,\frac{1}{2}, \ \ x>-\frac{1}{4}$ & & & & & & \multirow{2}{*}{7} & \\ \cline{4-5}

& & & $0<y^2<\frac{3}{4}$ & $x=\frac{1}{2}$ & & & & & & & \\ \cline{4-5}\cline{11-11}

& & & $y>0$ & $x\neq\frac{1}{2},\ \ x>y^2-\frac{1}{4}$ & & & & & & \multirow{10}{*}{6} & \\ \cline{3-5}\cline{7-7}

& & \multirow{2}{*}{$0<\lambda^2<\frac{1}{2}$} & \multirow{2}{*}{$x=0$} & $y=0$ & & 2 & & & & & \\ \cline{5-5}\cline{7-7}

& & & & $0<y<\frac{\lambda^2}{2}$ & 2 & 1 & 4 & 6 & 2 & & 3 \\ \cline{3-5}\cline{7-7}

& & \multirow{2}{*}{$\frac{1}{2}\leq\lambda^2<1$} & \multirow{2}{*}{$x=0$} & $y=0$ & & 2 & & & & & \\ \cline{5-5}\cline{7-7}

& & & & $0<y<\frac{1-\lambda^2}{2}$ & & 1 & & & & & \\ \cline{3-5}\cline{7-7}

& & \multirow{2}{*}{$1<\lambda^2\leq 5$} & \multirow{2}{*}{$x=0$} & $y=0$ & & 2 & & & & & \\ \cline{5-5}\cline{7-7}

& & & & $0<y<\frac{\lambda^2-1}{2}$ & & 1 & & & & & \\ \cline{3-5}\cline{7-7}

& & \multirow{3}{*}{$\lambda^2>5$} & \multirow{3}{*}{$x=0$} & $y=0$ & & 2 & & & & & \\ \cline{5-5}\cline{7-7}

& & & & $0<y<\frac{\lambda^2-1}{2},\ y\neq\sqrt{\lambda^2-1}$ & & \multirow{2}{*}{1} & & & & & \\ \cline{5-5}\cline{8-8}

& & & & $0<y<\frac{\lambda^2-1}{2},\ y=\sqrt{\lambda^2-1}$ & & & 5 & & & & \\ \hline\hline


$\frh_6$ & 1 & 1 & \multicolumn{2}{|c|}{0} & 2 & 2 & 5 & 6 & 2 & 6 & 3 \\ \hline\hline


$\frh_8$ & 0 & 0 & \multicolumn{2}{|c|}{0} & 2 & 2 & 6 & 7 & 3 & 8 & 3 \\ \hline
\end{tabular}
}



\hskip.5cm{\small
\renewcommand{\arraystretch}{1.4}
\begin{tabular}{|c|c|c|c|c|c|c|c|c|c|c|}
\hline
 & \multicolumn{3}{|c|}{Family II} & \multicolumn{7}{|c|}{Bott-Chern cohomology}\\
\hline
$\frg$ & $\rho$ & $B$ & $c$ & $h_{1,0}^{\mathrm{BC}}$ & $h_{2,0}^{\mathrm{BC}}$ & $h_{1,1}^{\mathrm{BC}}$ & $h_{2,1}^{\mathrm{BC}}$ & $h_{3,1}^{\mathrm{BC}}$ & $h_{2,2}^{\mathrm{BC}}$ & $h_{3,2}^{\mathrm{BC}}$\\ \hline\hline


$\frh_7$ & 1 & 1 & 0 & 1 & 2 & 5 & 6 & 2 & 5 & 3 \\ \hline\hline


$\frh_9$ & 0 & 1 & 1 & 1 & 1 & 4 & 5 & 3 & 6 & 3 \\ \hline\hline


$\frh_{10}$ & 1 & 0 & 1 & 1 & 1 & 4 & 5 & 2 & 5 & 3 \\ \hline\hline


\multirow{2}{*}{$\frh_{11}$} & \multirow{2}{*}{1} & $B\in\mathbb{R}-\{0,\frac{1}{2},1\}$ & $|B-1|$ & \multirow{2}{*}{1} & \multirow{2}{*}{1} & \multirow{2}{*}{4} & \multirow{2}{*}{5} & \multirow{2}{*}{2} & 5 & \multirow{2}{*}{3} \\ \cline{3-4}\cline{10-10}

& & $\frac{1}{2}$ & $\frac{1}{2}$ & & & & & & 6 & \\ \hline\hline


\multirow{2}{*}{$\frh_{12}$} & \multirow{2}{*}{1} & $\mathfrak{Re}B\neq\frac{1}{2}, \ \mathfrak{Im}B \neq 0$ & \multirow{2}{*}{$|B-1|$} & \multirow{2}{*}{1} & \multirow{2}{*}{1} & \multirow{2}{*}{4} & \multirow{2}{*}{5} & \multirow{2}{*}{2} & 5 & \multirow{2}{*}{3} \\ \cline{3-3} \cline{10-10}

& & $\mathfrak{Re}B=\frac{1}{2}, \ \mathfrak{Im}B\neq 0$ & & & & & & & 6 & \\ \hline\hline


\multirow{7}{*}{$\frh_{13}$} & \multirow{7}{*}{1} & \multirow{2}{*}{1} & $0<c<2, \ c\neq 1$ & \multirow{7}{*}{1} & \multirow{7}{*}{1} & \multirow{2}{*}{5} & \multirow{7}{*}{5} & \multirow{7}{*}{2} & 5 &\multirow{7}{*}{3} \\ \cline{4-4} \cline{10-10}

 & & & 1 & & & & & & 6 & \\ \cline{3-4} \cline{7-7} \cline{10-10}

 & & \multicolumn{2}{|c|}{$B\neq 1,\ c\neq |B|, |B-1|,$} & & & \multirow{5}{*}{4} & & & \multirow{3}{*}{5} & \\

 & & \multicolumn{2}{|c|}{$(c,|B|)\neq (0,1),$} & & & & & & & \\

 & & \multicolumn{2}{|c|}{$c^4-2(|B|^2+1)c^2+(|B|^2-1)^2<0$} &  &  &  &  &  &  & \\ \cline{3-4} \cline{10-10}

 & & \multicolumn{2}{|c|}{$B\neq 1,\ c=|B|>\frac{1}{2},$} & & & & & & \multirow{2}{*}{6} & \\

 & & \multicolumn{2}{|c|}{$|B|\neq |B-1|$} & & & & & & & \\ \hline\hline


\multirow{5}{*}{$\frh_{14}$} & \multirow{5}{*}{1} & 1 & 2 & \multirow{5}{*}{1} & \multirow{5}{*}{1} & 5 & \multirow{5}{*}{5} & \multirow{5}{*}{2} & 5 &\multirow{5}{*}{3} \\ \cline{3-4} \cline{7-7} \cline{10-10}

 & & $|B|=\frac{1}{2}$ & $\frac{1}{2}$ & & & \multirow{4}{*}{4} & & & 6 & \\ \cline{3-4} \cline{10-10}

 & & \multicolumn{2}{|c|}{$c\neq |B-1|,$} & & & & & & \multirow{3}{*}{5} & \\

 & & \multicolumn{2}{|c|}{$(c,|B|)\neq (0,1),\, (\frac{1}{2},\frac{1}{2}),\, (2,1),$} & & & & & & & \\

 & & \multicolumn{2}{|c|}{$c^4-2(|B|^2+1)c^2+(|B|^2-1)^2=0$} &  &  &  &  &  &  & \\ \hline\hline


\multirow{10}{*}{$\frh_{15}$} & \multirow{3}{*}{0} & 0 & 1 & \multirow{3}{*}{1} & \multirow{2}{*}{1} & 5 & \multirow{3}{*}{5} & \multirow{3}{*}{3} & \multirow{3}{*}{5} & \multirow{3}{*}{3} \\ \cline{3-4} \cline{7-7}

 & & \multirow{2}{*}{1} & $c\neq 0,\, 1$ & & & \multirow{2}{*}{4} & & & & \\ \cline{4-4}\cline{6-6}

 & & & 0 & & 2 & & & & & \\ \cline{2-11}


 & \multirow{7}{*}{1} & 0 & 0 & \multirow{7}{*}{1} & \multirow{2}{*}{2} & \multirow{2}{*}{4} & \multirow{7}{*}{5} & \multirow{7}{*}{2} & 7 & \multirow{7}{*}{3} \\ \cline{3-4}\cline{10-10}

 & & $|B|\neq 0,1$ & 0 & & & & & & \multirow{2}{*}{5} & \\ \cline{3-4} \cline{6-7}

 & & 1 & $c>2$ & & \multirow{5}{*}{1} & 5 & & & & \\ \cline{3-4} \cline{7-7} \cline{10-10}

 & & $|B|=c$ & $0<c<\frac{1}{2}$ & & & \multirow{4}{*}{4} & & & 6 & \\ \cline{3-4} \cline{10-10}

 & & \multicolumn{2}{|c|}{$c\neq 0,\ |B-1|$,} & & & & & & \multirow{3}{*}{5} & \\

 & & \multicolumn{2}{|c|}{$B\neq 1,\ |B|\neq c$,} & & & & & & & \\

 & & \multicolumn{2}{|c|}{$c^4-2(|B|^2+1)c^2+(|B|^2-1)^2>0$} & & & & & & & \\ \hline\hline


$\frh_{16}$ & 1 & $|B|=1, \ B\neq 1$ & 0 & 1 & 2 & 4 & 5 & 2 & 5 & 3 \\ \hline
\end{tabular}
}

\medskip



\hskip2.6cm{\small
\renewcommand{\arraystretch}{1.4}
\begin{tabular}{|c|c|c|c|c|c|c|c|c|}
\hline
 & Family III & \multicolumn{7}{|c|}{Bott-Chern cohomology}\\
\hline
$\frg$ & $\varepsilon$ & $h_{1,0}^{\mathrm{BC}}$ & $h_{2,0}^{\mathrm{BC}}$ & $h_{1,1}^{\mathrm{BC}}$ & $h_{2,1}^{\mathrm{BC}}$ & $h_{3,1}^{\mathrm{BC}}$ & $h_{2,2}^{\mathrm{BC}}$ & $h_{3,2}^{\mathrm{BC}}$\\ \hline\hline

$\mathfrak{h}_{19}^-$ & 0 & 1 & 1 & 2 & 3 & 2 & 4 & 2 \\ \hline\hline

$\mathfrak{h}_{26}^+$ & 1 & 1 & 1 & 2 & 3 & 2 & 3 & 2 \\ \hline
\end{tabular}

\bigskip

\medskip

\noindent {\bf Acknowledgments.}
We would like to thank Adriano Tomassini, Daniele Angella, Maria Giovanna Franzini and Federico Alberto Rossi for
pointing out the preprint \cite{AFR} and for useful comments and remarks.
This work has been partially supported through Project MICINN (Spain) MTM2011-28326-C02-01.


\smallskip


\begin{thebibliography}{33}

\bibitem{Ae} A. Aeppli, On the cohomology structure of Stein manifolds, Proc. Conf. Complex Analysis
(Minneapolis, Minn., 1964), Springer, Berlin (1965), 58--70.

\bibitem{AB} L. Alessandrini, G. Bassanelli, Small deformations of a class of compact non-K\"ahler manifolds,
\emph{Proc. Amer. Math. Soc.} {\bf 109} (1990), 1059--1062.

\bibitem{ABD} A. Andrada, M.L. Barberis, I.G. Dotti, Classification of abelian complex structures on 6-dimensional Lie algebras,
\emph{J. Lond. Math. Soc.} {\bf 83} (2011), 232--255.

\bibitem{Ang} D. Angella, The cohomologies of the Iwasawa manifold and of its small deformations,
to appear in \emph{J. Geom. Anal.}

\bibitem{AFR} D. Angella, M.G. Franzini, F.A. Rossi, Degree of non-K\"ahlerianity for 6-dimensional nilmanifolds, preprint.

\bibitem{AT} D. Angella, A. Tomassini, On the $\partial \db$-Lemma and Bott-Chern cohomology, to appear in
\emph{Invent. Math.}


\bibitem{BC} R. Bott, S.-S. Chern, Hermitian vector bundles and the equidistribution of the zeroes
of their holomorphic sections, \emph{Acta Math.} {\bf 114} (1965), 71–-112.

\bibitem{COUV} M. Ceballos, A. Otal, L. Ugarte, R. Villacampa,
Classification of complex structures on 6-dimensional nilpotent Lie algebras,
arXiv:1111.5873v4 [math.DG].


\bibitem{CF} S. Console, A. Fino, Dolbeault cohomology of compact nilmanifolds,
\emph{Transform. Groups} {\bf 6} (2001), 111--124.

\bibitem{CFP} S. Console, A. Fino, Y.S. Poon, Stability of abelian complex structures,
\emph{Int. J. Math.} {\bf 17} (2006), 401--416.



\bibitem{CFGU0} L.A. Cordero, M. Fern\'andez, A. Gray, L. Ugarte, A general description of the terms in
the Fr\"olicher spectral sequence, \emph{Differential Geom. Appl.} {\bf 7} (1997), 75--84.


\bibitem{CFGU2}
L.A. Cordero, M. Fern\'andez, A. Gray, L. Ugarte, Compact
nilmanifolds with nilpotent complex structure: Dolbeault cohomology,
\emph{Trans. Amer. Math. Soc.} {\bf 352} (2000), 5405--5433.

\bibitem{DGMS}
P. Deligne, P. Griffiths, J. Morgan, D. Sullivan,
Real homotopy theory of K\"ahler manifolds,
\emph{Invent. Math.} {\bf 29} (1975), 245--274.



\bibitem{Fro} A. Fr\"olicher, Relations between the cohomology groups
of Dolbeault and topological invariants, \emph{Proc. Nat. Acad. Sci. USA} {\bf 41} (1955), 641--644.



\bibitem{Gau} P. Gauduchon, La $1$-forme de torsion d'une vari\'et\'e hermitienne compacte,
\emph{Math. Ann.} {\bf 267} (1984), 495-–518.

\bibitem{GMPT} M. Grana, R. Minasian, M. Petrini, A. Tomasiello, Generalized structures of $N=1$ vacua, \emph{J. High Energy Phys.}  {\bf 11}
(2005), 020.

\bibitem{GMPT2} M. Grana, R. Minasian, M. Petrini, A. Tomasiello, A scan for new $N=1$ vacua on twisted tori,
\emph{J. High Energy Phys.}  {\bf 5} (2007), 031.

\bibitem{Ha} K. Hasegawa, Minimal models of nilmanifolds.
\emph{Proc. Amer. Math. Soc.} {\bf 106} (1989), 65–-71.

\bibitem{KS} G. Ketsetzis, S. Salamon, Complex structures on the Iwasawa manifold,
\emph{Adv. Geom.} {\bf 4} (2004), 165--179.

\bibitem{MPPS} C. Maclaughlin, H. Pedersen, Y.S. Poon, S. Salamon,
Deformation of 2-step nilmanifolds with abelian complex structures,
\emph{J. Lond. Math. Soc.} {\bf 73} (2006), 173--193.

\bibitem{Mi} M.L. Michelsohn, On the existence of special metrics
in complex geometry, \emph{Acta Math.} {\bf 149} (1982), 261--295.

\bibitem{Nomizu} K. Nomizu,
On the cohomology of compact homogeneous spaces of nilpotent Lie groups,
\emph{Ann. Math.} {\bf 59} (1954), 531--538.

\bibitem{Pop0} D. Popovici, Limits of projective manifolds under holomorphic deformations, arXiv:0910.2032 [math.AG].

\bibitem{Pop1} D. Popovici, Stability of strongly Gauduchon manifolds under modifications, to appear in \emph{J. Geom. Anal.}

\bibitem{Pop2} D. Popovici, Deformation openness and closedness of various classes
of compact complex manifolds; Examples, to appear in \emph{Ann. Sc. Norm. Super. Pisa}.


\bibitem{R1} S. Rollenske, Lie-algebra Dolbeault-cohomology and small deformations of nilmanifolds,
\emph{J. Lond. Math. Soc.} {\bf 79} (2009), 346--362.

\bibitem{R2} S. Rollenske, Geometry of nilmanifolds with left-invariant
complex structure and deformations in the large, \emph{Proc. London Math.
Soc.} {\bf 99} (2009), 425--460.

\bibitem{Sakane} Y. Sakane, On compact complex parallelisable solvmanifolds,
\emph{Osaka J. Math.} {\bf 13} (1976), 187--212.

\bibitem{S} S. Salamon, Complex structures on nilpotent Lie algebras,
\emph{J. Pure Appl. Algebra} {\bf 157} (2001), 311--333.

\bibitem{Schw} M. Schweitzer, Autour de la cohomologie de Bott-Chern,
arXiv:0709.3528v1 [math.AG].

\bibitem{TY} L-S. Tseng, S-T. Yau, Generalized Cohomologies and Supersymmetry,
arXiv:1111.6968v1 [hep-th].

\bibitem{UV1} L. Ugarte, R. Villacampa, Non-nilpotent complex geometry of nilmanifolds and heterotic
supersymmetry, arXiv:0912.5110v1 [math.DG].

\bibitem{UV2} L. Ugarte, R. Villacampa, Balanced hermitian geometry on 6-dimensional nilmanifolds, arXiv:1104.5524v2 [math.DG].

\bibitem{Wu} C-C. Wu, On the geometry of superstrings with torsion, Ph.D. Thesis, Harvard University, Proquest LLC, Ann Arbor, MI (2006).


\end{thebibliography}
\end{document}